\documentclass[a4paper, oneside]{amsart}

\usepackage[
paper=a4paper,
text={132mm,205mm},centering
]{geometry}
\usepackage{amssymb}
\usepackage[arrow]{xy}
\usepackage{pb-diagram,pb-xy}
\usepackage{graphicx,float}
\usepackage{hyperref}
\usepackage{xcolor,color}

\iftrue
\makeatletter
\def\@settitle{
  \vspace*{-0pt}
  \begin{flushleft}
    \LARGE\bfseries
    \strut\@title\strut
  \end{flushleft}
}
\def\@setauthors{
  \begingroup
  \def\thanks{\protect\thanks@warning}
  \trivlist
  \raggedright
  \large \@topsep27\p@\relax
  \advance\@topsep by -\baselineskip
  \item\relax
  \author@andify\authors
  \def\\{\protect\linebreak}
  \authors
  \ifx\@empty\contribs
  \else
    ,\penalty-3 \space \@setcontribs
    \@closetoccontribs
  \fi
  \normalfont
 \@setaddresses
  \endtrivlist
  \endgroup
}
\def\@setaddresses{\par
  \nobreak \begingroup
  \small
  \def\author##1{\nobreak\addvspace\smallskipamount}
  \def\\{\unskip, \ignorespaces}
  \interlinepenalty\@M
  \def\address##1##2{\begingroup
    \par\addvspace\bigskipamount\noindent
    \@ifnotempty{##1}{(\ignorespaces##1\unskip) }
    {\ignorespaces##2}\par\endgroup}
  \def\curraddr##1##2{\begingroup
    \@ifnotempty{##2}{\nobreak\noindent\curraddrname
      \@ifnotempty{##1}{, \ignorespaces##1\unskip}\/:\space
      ##2\par}\endgroup}
  \def\email##1##2{\begingroup
    \@ifnotempty{##2}{\nobreak\noindent E-mail address
      \@ifnotempty{##1}{, \ignorespaces##1\unskip}\/:\space
      \ttfamily##2\par}\endgroup}
  \def\urladdr##1##2{\begingroup
    \def~{\char`\~}
    \@ifnotempty{##2}{\nobreak\noindent\urladdrname
      \@ifnotempty{##1}{, \ignorespaces##1\unskip}\/:\space
      \ttfamily##2\par}\endgroup}
  \addresses
  \endgroup
  \global\let\addresses=\@empty
}
\def\@setabstracta{
    \ifvoid\abstractbox
  \else
    \skip@17pt \advance\skip@-\lastskip
    \advance\skip@-\baselineskip \vskip\skip@
    \box\abstractbox
    \prevdepth\z@ 
    \vskip-28pt
  \fi
}
\renewenvironment{abstract}{
  \ifx\maketitle\relax
    \ClassWarning{\@classname}{Abstract should precede
      \protect\maketitle\space in AMS document classes; reported}
  \fi
  \global\setbox\abstractbox=\vtop \bgroup
    \normalfont\small
    \list{}{\labelwidth\z@
      \leftmargin0pc \rightmargin\leftmargin
      \listparindent\normalparindent \itemindent\z@
      \parsep\z@ \@plus\p@
      
    }
    \item[\hskip\labelsep\bfseries\abstractname.]
}{
  \endlist\egroup
  \ifx\@setabstract\relax \@setabstracta \fi
}

\def\ps@headings{\ps@empty
  \def\@evenhead{
    \setTrue{runhead}
    \normalfont\scriptsize
    \rlap{\thepage}\hfill
    \def\thanks{\protect\thanks@warning}
    \leftmark{}{}}
  \def\@oddhead{
    \setTrue{runhead}
    \normalfont\scriptsize
    \def\thanks{\protect\thanks@warning}
    \rightmark{}{}\hfill \llap{\thepage}}
  \let\@mkboth\markboth
}\ps@headings

\def\section{\@startsection{section}{1}
  \z@{-1.4\linespacing\@plus-.5\linespacing}{.8\linespacing}
  {\normalfont\bfseries\Large}}
\def\subsection{\@startsection{subsection}{2}
  \z@{-.8\linespacing\@plus-.3\linespacing}{.5\linespacing\@plus.2\linespacing}
  {\normalfont\bfseries\large}}
\def\subsubsection{\@startsection{subsubsection}{3}
  \z@{.7\linespacing\@plus.2\linespacing}{-1.5ex}
  {\normalfont\bfseries}}
\def\@secnumfont{\bfseries}

\renewcommand\contentsnamefont{\bfseries}
\def\@starttoc#1#2{\begingroup
  \setTrue{#1}
  \par\removelastskip\vskip\z@skip
  \@startsection{}\@M\z@{\linespacing\@plus\linespacing}
    {.5\linespacing}{
      \contentsnamefont}{#2}
  \ifx\contentsname#2
  \else \addcontentsline{toc}{section}{#2}\fi
  \makeatletter
  \@input{\jobname.#1}
  \if@filesw
    \@xp\newwrite\csname tf@#1\endcsname
    \immediate\@xp\openout\csname tf@#1\endcsname \jobname.#1\relax
  \fi
  \global\@nobreakfalse \endgroup
  \addvspace{32\p@\@plus14\p@}
  \let\tableofcontents\relax
}
\def\contentsname{Contents}

\def\l@section{\@tocline{2}{.5ex}{0mm}{5pc}{}}
\def\l@subsection{\@tocline{2}{0pt}{2em}{5pc}{}}
\makeatother
\fi 

\theoremstyle{plain}
\newtheorem{theorem}{Theorem}[section]
\newtheorem{proposition}[theorem]{Proposition}
\newtheorem{corollary}[theorem]{Corollary}
\newtheorem{lemma}[theorem]{Lemma}

\theoremstyle{definition}
\newtheorem{definition}[theorem]{Definition}

\newtheorem{remark}[theorem]{Remark}

\makeatletter
\def\Nopagebreak{\@nobreaktrue\nopagebreak}
\makeatother

\def\Z{\mathbb{Z}}
\def\Q{\mathbb{Q}}
\def\R{\mathbb{R}}
\def\C{\mathbb{C}}
\def\N{\mathbb{N}}

\def\B{\mathcal{B}}
\def\K{\mathcal{K}}
\def\M{\mathcal{M}}

\def\P{\mathcal{P}}
\def\T{\mathcal{T}}
\def\S{\mathcal{S}}
\def\C{\mathcal{C}}
\def\H{\mathcal{H}}
\def\K{\mathcal{K}}
\def\hH{\widehat{\H}}
\def\hF{\widehat{F}}
\def\hG{\widehat{G}}
\def\hM{\widehat{M}}
\def\hN{\widehat{N}}
\def\hW{\widehat{W}}
\def\hMN{\widehat{M\cdot N}}
\def\l{\lambda}

\def\exp{\operatorname{exp}}
\def\Aut{\operatorname{Aut}}
\def\Ker{\operatorname{Ker}}
\def\Coker{\operatorname{Coker}}
\def\Im{\operatorname{Im}}

\def\sign{\operatorname{sign}}
\def\id{\mathrm{id}}
\def\max{\mathrm{max}}

\def\rank{\mathrm{rank}\;}

\def\zp{\mathbb{Z}_{(p)}}
\def\z2{\mathbb{Z}_{(2)}}

\def\F{\mathcal{F}}

\def\to{\mathchoice{\longrightarrow}{\rightarrow}{\rightarrow}{\rightarrow}}
\makeatletter
\newcommand{\shortxra}[2][]{\ext@arrow 0359\rightarrowfill@{#1}{#2}}
\def\longrightarrowfill@{\arrowfill@\relbar\relbar\longrightarrow}
\newcommand{\longxra}[2][]{\ext@arrow 0359\longrightarrowfill@{#1}{#2}}
\renewcommand{\xrightarrow}[2][]{\mathchoice{\longxra[#1]{#2}}%
  {\shortxra[#1]{#2}}{\shortxra[#1]{#2}}{\shortxra[#1]{#2}}}
\newcommand{\hooklongrightarrow}{\lhook\joinrel\longrightarrow}

\makeatother

\makeatletter
\def\Nopagebreak{\@nobreaktrue\nopagebreak}
\makeatother

\begin{document}

\title
[The homology cobordism group of homology cylinders]
{Invariants and structures of the homology cobordism group of homology cylinders}
\author{Minkyoung Song}
\address{
  Department of Mathematics\\
  POSTECH\\
  Pohang 790--784\\
  Republic of Korea
}
\email{pp1004@postech.ac.kr}

\begin{abstract}
The homology cobordism group of homology cylinders is a generalization of the mapping class group and the string link concordance group. We study this group and its filtrations by subgroups by developing new homomorphisms. First, we define extended Milnor invariants by combining the ideas of Milnor's link invariants and Johnson homomorphisms. They give rise to a descending filtration of the homology cobordism group of homology cylinders. We show that each successive quotient of the filtration is free abelian of finite rank. Second, we define Hirzebruch-type intersection form defect invariants obtained from iterated $p$-covers for homology cylinders. Using them, we show that the abelianization of the intersection of our filtration is of infinite rank. Also we investigate further structures in the homology cobordism group of homology cylinders which previously known invariants do not detect.
\end{abstract}

\maketitle

\setcounter{tocdepth}{2}
\tableofcontents

\section{Introduction}
The homology cobordism group of homology cylinders is an interesting object of study which extends the mapping class group and generalizes the concordance group of string links in homology 3-balls. The aim of this paper is to enhance our understanding of the structure of the group by developing two new invariants which are homomorphisms. We obtain the first invariant by combining the ideas of Milnor's link invariant and Johnson's homomorphism, and the second is a Hirzebruch-type intersection form defect from iterated $p$-covers. In this paper, manifolds are assumed to be compact and oriented. Our results hold in both topological and smooth categories. 

Let $\Sigma_{g,n}$ be a surface of genus $g$ with $n$ boundary components.
Roughly speaking, a \emph{homology cylinder} over $\Sigma_{g,n}$ is a homology cobordism between two copies of~$\Sigma_{g,n}$. A homology cylinder is endowed with two embeddings $i_+^{\vphantom{}}$ and $i_-^{\vphantom{}}$ of $\Sigma_{g,n}$ called \emph{markings}. The notion of homology cylinders was first introduced by Goussarov \cite{Go} and Habiro \cite{Ha} independently, as important model objects for their theory of finite type invariants of 3-manifolds which play the role of string links in the theory of finite type invariants of links. While the set $\C_{g,n}$ of marking-preserving homeomorphism types of homology cylinders is a monoid, the set $\H_{g,n}$ of homology cobordism classes becomes a group under juxtaposition. The group $\H_{g,n}$ was introduced by Garoufalidis and Levine as an enlargement of the mapping class group $\M_{g,n}$ of $\Sigma_{g,n}$~\cite{GL, L01}.
The group $\M_{g,n}$ injects into $\C_{g,n}$ and also into $\H_{g,n}$ (See \cite[p.~247]{L01}, \cite[Proposition 2.3]{CFK}).
Moreover, when $n>1$, $\H_{g,n}$ can be seen as a generalization of the concordance group of framed string links in homology balls.

In this paper, we assume that $n >0$, i.e.\ $\Sigma_{g,n}$ has nonempty boundary. We usually omit the subscripts and simply write $\Sigma$, $\C$, and $\H$ when $g,n$ are understood from the context. Let $I$ denote the interval~$[0,1]$. For a group $G$, $G_k$ denotes the $k$th term of lower central series given by $G_1=G$, $G_{k+1} = [G, G_k]$, and $G^{(k)}$ denotes the $k$th derived subgroup given by $G^{(0)}=G$, $G^{(k+1)}=[G^{(k)}, G^{(k)}]$.

In the literature, the structure of $\H$ was studied by constructing invariants. In particular, invariants which are group homomorphisms are essential in understanding the group structure.
Let $F= \pi_1(\Sigma)$ and $H=H_1(\Sigma)$.
In \cite{GL, L01}, Garoufalidis and Levine defined homomorphisms $\eta_q^{\vphantom{}}\colon \H_{g,1} \to \Aut(F/F_q)$ and a filtration $\H_{g,1}[q]:=\Ker \eta_q^{\vphantom{}}$ as extensions of the Johnson homomorphisms and the Johnson filtration of the mapping class group $\M_{g,1}$ \cite{Jo}. We call the maps $\eta_q^{\vphantom{}}$ on $\H_{g,1}$ \emph{Garoufalidis-Levine homomorphisms}. (In some literature, those on $\H_{g,1}$ are referred to also as `Johnson homomorphisms.')  Briefly, the invariants $\eta_q^{\vphantom{}}$ measure the difference between two markings on $F/F_q$. Garoufalidis and Levine determined the image of $\H_{g,1}[q]$ under~$\eta_q^{\vphantom{}}$ and showed each successive quotient $\H_{g,1}[q-1]/\H_{g,1}[q]$ is finitely generated free abelian. See also Remark~\ref{remark:GL} and the paragraph after Theorem~\ref{theorem:rank} for precise statements. We remark that the image of the Johnson subgroup of the mapping class group is unknown.
In \cite{M}, Morita obtained a homomorphism on $\H_{g,1}$ by taking the limit of a trace map composed with~$\eta_q^{\vphantom{}}$. He used this to show that the Torelli subgroup $\H_{g,1}[2]$ of $\H_{g,1}$ has infinite rank abelianization, while it is known that the Torelli subgroup of $\M_{g,1}$ is finitely generated for $g\geq 3$.
In \cite{S08, S}, Sakasai defined Magnus representations, which are crossed homomorphisms on $\H_{g,1}$ and homomorphisms on the subgroups~$\H_{g,1}[q]$. Using them, he proved that $\M_{g,1}$ is not a normal subgroup of $\H_{g,1}$ for $g\geq 3$.
Cha, Friedl, and Kim defined a torsion invariant in \cite{CFK}. They used it to show that the abelianization of $\H_{g,n}$ contains a direct summand isomorphic to $(\Z/2)^\infty$ if $b_1(\Sigma)>0$, and contains a direct summand isomorphic to $\Z^\infty$ if $n>1$.
In \cite{CHH}, Cochran, Harvey, and Horn considered signature invariants for $\H_{g,1}[2]$, which are the von Neumann-Cheeger-Gromov $L^2$-signature defects of bounding 4-manifolds. Their invariants are quasimorphisms on $\H_{g,1}[q]$ and send $\H_{g,1}[q]$ to a dense and infinitely generated subgroup of $\R$ for $g\geq 1$. They become homomorphisms on the kernel of Sakasai's Magnus representation on~$\H_{g,1}[q]$.

In fact, for $\H_{g,1}$, $\eta_q^{\vphantom{}}$ is related to the Milnor invariant of (string) links as described briefly below. The concordance group of $m$-component framed string links in homology balls is naturally identified, by taking the exterior, with~$\H_{0,m+1}$, and the total Milnor invariant of length $\leq q$ for string links can be viewed as a homomorphism $\mu_q^{\vphantom{}}$ defined on $\H_{0,m+1}$.
We denote its kernel by~$\H_{0,m+1}(q)$. Habegger established a bijection $\H_{0,2g+1}(2)\to \H_{g,1}[2]$, which is not a homomorphism but descends to an isomorphism between $\H_{0,2g+1}(q-1)/\H_{0,2g+1}(q)$ and $\H_{g,1}[q-1]/\H_{g,1}[q]$~\cite{Ha}. Levine found a monomorphism $\H_{0,g+1}\hookrightarrow \H_{g,1}$ which induces a monomorphism of $\H_{0,g+1}(q-1)/\H_{0,g+1}(q)$ into $\H_{g,1}[q-1]/\H_{g,1}[q]$~\cite{L01}. Habegger and Levine showed that $\mu_q^{\vphantom{}}$ and $\eta_q^{\vphantom{}}$ can be identified under these maps, respectively.

\subsection{Extended Milnor invariants}
\label{sec:1.1}
We will define a new invariant $\tilde\mu_q^{\vphantom{}}$ on $\H_{g,n}$ for arbitrary $(g,n)$ with $n \geq 1$. This is a common generalization of the Milnor $\bar\mu$-invariant and the Garoufalidis-Levine homomorphism which are defined only when $g=0$ and $n=1$, respectively.

As in \cite{HL}, string links have the advantage that their $\bar\mu$-invariants are well-defined without indeterminancy, in contrast to links, because a string link has well-defined (zero-linking) longitudes, as elements of the fundamental group of the exterior (see Section~\ref{sec:longitude} for details).
In fact, the Milnor invariant of a string link essentially represents the longitudes as elements of the free nilpotent quotient.

We generalize this to homology cylinders over a surface $\Sigma=\Sigma_{g,n}$ as follows.
Briefly speaking, we take $n-1$ fundamental group elements for a homology cylinder as analogs of the longitudes of a string link, 
and to extract more information from the fundamental group, 
we consider additional $2g$ elements that arise from symplectic basis curves of the surface~$\Sigma$. 
Note that $2g+n-1$, the total number of the elements we consider, is equal to the first Betti number~$b_1(\Sigma)$.
By taking the image of those elements in $F/F_q$, where $F=\pi_1(\Sigma)$, 
we define an extended Milnor invariant 
$$\tilde\mu_q^{\vphantom{}} \colon \H_{g,n}  \to (F/F_q)^{2g+n-1}. $$ 
 The precise definition is given in Section~\ref{sec:longitude}. It turns out that $\tilde\mu_q^{\vphantom{}}$ is equivalent to the Garoufalidis-Levine homomorphism $\eta_q^{\vphantom{}}$ for $n=1$, and to the Milnor $\bar\mu$-invariant for $g=0$.

We remark that the $2g+n-1$ elements used above are essential in understanding the fundamental group of the \emph{closure} of a homology cylinder (see Section~\ref{sec:definition} for the definition), from which we will extract signature defects and more generally Witt class invariants. These invariants will be discussed in the next subsection. 

We define a filtration by $\H(q) :=\Ker \tilde\mu_q^{\vphantom{}}$. This generalizes  the Garoufalidis-Levine's filtration $\H[q]$, in the sense that $\H(q) = \H[q]$ for $n=1$.
We remark that Garoufalidis-Levine's definition for $n=1$ can be applied to the case of $n>1$ to give a filtration, which we also denote by $\H[q]$; our filtration is finer than this, that is, we have $\H(q) \subset \H[q]$ in general.
The map $\tilde\mu_q^{\vphantom{}}$ is a crossed homomorphism. It is a homomorphism on both $\H[q]$ and~$\H(q-1)$. For more details, see Section~\ref{sec:product formula}.

Regarding the structure of the successive quotients of our filtration, we have the following result:
\begin{theorem}
  For each $q \geq 2$, the following hold: 
  \begin{enumerate}
	\item $\tilde\mu_{q}^{\vphantom{}}$ induces an injective homomorphism 
	$$\tilde\mu_{q}^{\vphantom{}}\colon \H(q-1)/\H(q) \hooklongrightarrow  (F_{q-1}/F_{q})^{2g+n-1}.$$
	Hence, $\H(q-1)/\H(q)$ is finitely generated free abelian.
	\item We have
	$$\max\{r_q(2g),r_q(g+n-1)\} \leq \rank \H(q-1)/\H(q) \leq r_q(2g+n-1)$$
		where $N_q(m)=\frac{1}{q}\sum_{d|q} \varphi(d) (m^{q/d})$, $\varphi$ is the M\"obius function, and $r_q(m)=m N_{q-1}(m)-N_q(m)$.
  \end{enumerate}
\end{theorem}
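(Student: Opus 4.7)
The plan is as follows. For part~(1), I would invoke the fact, noted in Section~\ref{sec:product formula}, that $\tilde\mu_q$ restricts to a genuine homomorphism on $\H(q-1)$. Since $\H(q) = \Ker\tilde\mu_q$ by definition, passing to the quotient immediately yields an injection $\H(q-1)/\H(q) \hookrightarrow (F/F_q)^{2g+n-1}$. The image is in fact contained in $(F_{q-1}/F_q)^{2g+n-1}$: for $M \in \H(q-1)$, the condition $\tilde\mu_{q-1}(M)=0$ forces each generalized longitude used to define $\tilde\mu_q(M)$ to lie in $F_{q-1}$ already, before reduction modulo $F_q$. Because $\Sigma_{g,n}$ has non-empty boundary, $F = \pi_1(\Sigma)$ is free of rank $b_1(\Sigma) = 2g+n-1$, and by the Magnus/Witt formula $F_{q-1}/F_q$ is free abelian of finite rank; subgroups of finitely generated free abelian groups are finitely generated free abelian, establishing part~(1).

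For the upper bound in part~(2), I would pin down the additional algebraic constraint on the image of $\tilde\mu_q$. With $H := F/F_2 = H_1(\Sigma;\Z)$ of rank $2g+n-1$, the commutator pairing $H \otimes (F_{q-1}/F_q) \twoheadrightarrow F_q/F_{q+1}$ is surjective (both ranks being computable via Witt's formula). I would show that when the $2g+n-1$ coordinates of $\tilde\mu_q(M)$ are paired with the corresponding dual basis classes in $H$ and summed, the result lies in the kernel of this bracket. Geometrically this is the analog of Milnor's cyclic symmetry relation for string links and of the Jacobi-type constraint defining the image of the Johnson homomorphism; it is forced by evaluating the boundary word of $\Sigma$ in $F/F_q$ via $M$. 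From the short exact sequence together with Witt's formula, the rank of this kernel equals $(2g+n-1) N_{q-1}(2g+n-1) - N_q(2g+n-1) = r_q(2g+n-1)$, yielding the upper bound.

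For the lower bounds I would exhibit two filtration-preserving monomorphisms into $\H_{g,n}$ and pull the known rank computations across them. Capping off the $n-1$ extra boundary components by a fixed trivial homology cobordism gives an embedding $\H_{g,1} \hookrightarrow \H_{g,n}$ which carries $\H_{g,1}[q-1]/\H_{g,1}[q]$ injectively into $\H(q-1)/\H(q)$; combined with the Garoufalidis--Levine rank formula $\rank\H_{g,1}[q-1]/\H_{g,1}[q] = r_q(2g)$, this yields the lower bound $r_q(2g)$. A string-link embedding $\H_{0,g+n} \hookrightarrow \H_{g,n}$ generalizing Levine's $\H_{0,g+1} \hookrightarrow \H_{g,1}$ matches the Milnor filtration with our $\H(\cdot)$; combined with the Habegger--Milnor computation $\rank\H_{0,g+n}(q-1)/\H_{0,g+n}(q) = r_q(g+n-1)$, this gives the second lower bound.

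The hardest step is the upper bound, which requires a clean algebraic identification of the commutator constraint on the generalized longitudes --- in particular, verifying that the boundary word of $\pi_1(\Sigma)$ survives under $M$-translation to give precisely this Jacobi-type relation in $F/F_q$, despite the fact that $\tilde\mu_q$ is only a crossed homomorphism on all of $\H$. The lower-bound arguments are largely geometric, but still require checking that each embedding respects the $\tilde\mu$-filtrations and correctly identifies the relevant invariants, especially the matching between the symplectic/boundary-parallel basis used in $\H_{g,n}$ and the longitude basis in the string-link picture.
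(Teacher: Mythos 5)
Your proposal follows essentially the same route as the paper: part~(1) via the restriction of $\tilde\mu_q$ to a genuine homomorphism on $\H(q-1)$ landing in $(F_{q-1}/F_q)^{2g+n-1}$; the upper bound by exhibiting a surjection from $(F_{q-1}/F_q)^{2g+n-1}$ onto $F_q/F_{q+1}$ whose kernel has rank $r_q(2g+n-1)$ and whose composition with $\tilde\mu_q$ is killed, by applying $\eta_{q+1}(M)$ to the boundary word $[\partial_n]=\prod_i x_i\prod_j[m_j,l_j]$; and the lower bounds by pulling back the known ranks $r_q(2g)$, $r_q(g+n-1)$ through the embeddings $\H_{g,1}\hookrightarrow\H_{g,n}$ and $\H_{0,g+n}\hookrightarrow\H_{g,n}$ of Corollary~\ref{cor:whole injection}. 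The one place where your description is slightly imprecise is the pairing: the paper's map $\mathfrak{p}$ pairs the coordinate for $x_i$ with $x_i$ itself and pairs the coordinates for $m_j$, $l_j$ with their symplectic duals $l_j$, $m_j$ (with an extra self-term $[b_j,c_j]$ that only matters for $q=2$), rather than a uniform ``dual basis'' pairing, but this is exactly the constraint the boundary word $[\partial_n]$ produces and does not change the rank count.
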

It is known that the injection in (1) is an isomorphism and the equality in (2) holds if either $g=0$, i.e.\ for string links \cite{Or}, or $n=1$~\cite{GL}.

Our next result is that a surface embedding gives rise to relationships between homology cobordism groups of homology cylinders over the surfaces and between their extended Milnor invariants as follows:
\begin{theorem}
  For any embedding $\imath\colon\Sigma_{g,n} \to \Sigma_{g',n'}$ with $n,n'\geq 1$, it induces a homomorphism $\tilde\imath\colon \H_{g,n} \to \H_{g',n'}$, and a function $f\colon (F/F_q)^{2g+n-1} \to (F'/F'_q)^{2g'+n'-1}$ which make the following diagram commute, where $F=\pi_1(\Sigma_{g,n})$ and $F'=\pi_1(\Sigma_{g',n'})$:
$$  \begin{diagram}
  	\node{\H_{g,n}} \arrow{e,t}{\tilde\imath} \arrow{s,r}{\tilde\mu_q^{\vphantom{}}} \node{\H_{g',n'}} \arrow{s,l}{\tilde\mu_q^{\vphantom{}}} \\
  	\node{(F/F_q)^{2g+n-1}} \arrow{e,t}{f} \node{(F'/F'_q)^{2g'+n'-1}}
  \end{diagram}$$
\end{theorem}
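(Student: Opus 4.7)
The plan is to construct $\tilde\imath$ by a substitution construction and to read off $f$ from a Van Kampen analysis of the distinguished curves defining $\tilde\mu_q^{\vphantom{}}$.

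\emph{Construction of $\tilde\imath$.} After a preliminary isotopy, I may assume $\imath(\Sigma_{g,n})$ lies in the interior of $\Sigma_{g',n'}$, and I set $\Sigma'':=\overline{\Sigma_{g',n'}\setminus\imath(\Sigma_{g,n})}$. For a homology cylinder $(M,i_\pm)$ over $\Sigma_{g,n}$, I define
$$\tilde\imath(M):=M\cup_{\imath\times\id_I}(\Sigma''\times I),$$
by identifying the vertical boundary $\partial\Sigma_{g,n}\times I\subset\partial M$ with $\imath(\partial\Sigma_{g,n})\times I\subset\Sigma''\times I$. The new markings are $(i_\pm\circ\imath^{-1})\cup(\id_{\Sigma''}\times\{0,1\})$ under the identification $\imath(\Sigma_{g,n})\cup\Sigma''=\Sigma_{g',n'}$. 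A Mayer--Vietoris computation, using that $\Sigma''\times I$ is a product and that $i_\pm$ are homology isomorphisms, shows that $\tilde\imath(M)$ is a homology cylinder over $\Sigma_{g',n'}$. The same substitution applied to a homology cobordism (with filler $\Sigma''\times I\times I$) shows $\tilde\imath$ descends to $\H_{g,n}\to\H_{g',n'}$, and since the two $\Sigma''\times I$ factors concatenate to $\Sigma''\times[0,2]\cong\Sigma''\times I$, $\tilde\imath$ respects juxtaposition.

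\emph{Construction of $f$ and commutativity.} According to the definition of $\tilde\mu_q^{\vphantom{}}$ in Section~\ref{sec:longitude}, $\tilde\mu_q^{\vphantom{}}(M)$ is extracted from $2g+n-1$ preferred curves on $\Sigma_{g,n}$ (a symplectic basis together with $n-1$ longitude curves) by pushing them into $M$ via the markings and reading the resulting $\pi_1$-classes in $F/F_q$, using the canonical isomorphism from $\pi_1(M)$ modulo the $q$-th lower central subgroup to $F/F_q$. I fix analogous preferred curves on $\Sigma_{g',n'}$ and isotope them transverse to $\imath(\partial\Sigma_{g,n})$; each then splits into arcs alternately in $\imath(\Sigma_{g,n})$ and in $\Sigma''$. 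Inside $\tilde\imath(M)$, the $\Sigma''$-arcs lie in the product region and so contribute elements depending only on $\imath$, while each $\imath(\Sigma_{g,n})$-arc lifts through the markings to $M$ and can be expressed via $\imath_*$ as a word in the preferred curves of $\Sigma_{g,n}$. Reassembling by Van Kampen writes each entry of $\tilde\mu_q^{\vphantom{}}(\tilde\imath(M))$ as an explicit word in $F'/F'_q$ built from $\imath_*$ applied to the entries of $\tilde\mu_q^{\vphantom{}}(M)$ together with $\imath$-dependent terms. Taking this word as the definition of $f$ makes the diagram commute by construction.

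\emph{Main obstacle.} The principal work is verifying that the resulting word is well-defined as a function on the abstract group $(F/F_q)^{2g+n-1}$, independent of the geometric choices (transverse representatives of the preferred curves on $\Sigma_{g',n'}$, the ordering of their arcs, the basepoint paths used for $\imath_*$, and the specific representative of $\imath$ in its isotopy class). I expect each such ambiguity to alter the word only by a conjugation or by an element of $F'_q$, hence to be invisible in $F'/F'_q$. A subsidiary point is that $f$ is merely a function, not a group homomorphism; this is essential because the $\imath$-dependent "constant" contributions from the $\Sigma''$-arcs obstruct strict multiplicativity, mirroring the fact that $\tilde\mu_q^{\vphantom{}}$ itself is only a crossed homomorphism on all of $\H$.
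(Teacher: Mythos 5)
Your construction of $\tilde\imath$ by gluing on the product $\Sigma''\times I$ is exactly what the paper does (it also reappears, in the reverse direction, when the paper builds $\tilde\jmath$ in the injectivity argument), and the homomorphism property via concatenation of collars is fine. Where the proposal and the paper diverge is in how $f$ is nailed down. You propose to read off $f$ from an arc decomposition of the preferred curves of $\Sigma_{g',n'}$ and a Van Kampen bookkeeping, and you defer the well-definedness to a final claim that any ambiguity (basepoint paths, transverse representatives, isotopy of $\imath$) "alters the word only by a conjugation or by an element of $F'_q$, hence is invisible in $F'/F'_q$." That last inference does not hold: $F'/F'_q$ is nonabelian for $q>2$, so a conjugation is in general visible; and conjugating the whole tuple changes $\tilde\mu_q$ precisely as described by Proposition~\ref{proposition:effect of change}, not trivially. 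This is the genuine gap in the proposal.

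The paper sidesteps the ambiguity rather than arguing it away. It first invokes Proposition~\ref{proposition:effect of change} to reduce to a canonical, adapted pair of generating sets: basepoints and generators for $F$ and $F'$ are chosen so that generators of $F$ corresponding to boundary circles of $\Sigma_{g,n}$ become ($\imath_*$ of) a subset of the generators of $F'$ (the $m'^r_{g_r+k}$), the complementary pieces $S_r$ contribute their own generators, and connecting arcs become dual generators $l'^r_{g_r+k}$. With this setup, each entry of $\tilde\mu_q(\tilde\imath(M))$ is given by a short explicit formula in the entries of $\tilde\mu_q(M)$ together with fixed elements of $F'$ (commutators against $\imath_\#(z_k)$ and the like), and this formula is taken as the definition of $f$; there is then nothing left to check about independence of choices. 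In short: your idea is right and your Van Kampen computation would reproduce the paper's formulae, but you should not expect the indeterminacies to cancel on their own — you need to fix adapted generators first (using Proposition~\ref{proposition:effect of change}), and then $f$ becomes an unambiguous algebraic word rather than an arc-by-arc geometric recipe.
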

In addition, we present a sufficient condition for $f$ to be 1-1 and a sufficient condition for $\tilde\imath\colon \H_{g,n}\to\H_{g',n'}$ to be injective in Theorem~\ref{theorem:whole injection}. The former implies that the extended Milnor invariant of $\tilde\imath(M) \in \H_{g',n'}$ determines that of $M\in\H_{g,n}$. 

Applying the above result to appropriate surface embeddings, we obtain the following: 
\begin{corollary}
  \label{cor:whole injection}
  For any two pairs $(g,n)$ and $(g',n')$ satisfying $g \leq g'$ and $g+n \leq g'+n'$, there is an injective homomorphism
  $$\H_{g,n} \hooklongrightarrow \H_{g',n'} ,$$
which induces injections
  $$\H_{g,n}(q-1)/\H_{g,n}(q) \hooklongrightarrow \H_{g',n'}(q-1)/\H_{g',n'}(q)$$
for all $q \geq 2$.
\end{corollary}
Levine's monomorphism $\H_{0,g+1}\to \H_{g,n}$ \cite{L01} is a special case of this.

In the next subsection, we present our results on the structure of~$\H(\infty):=\bigcap_q \H(q)$.

\subsection{Hirzebruch-type intersection form defect invariants}

In \cite{C10}, Cha defined Hirzebruch-type intersection form defects for closed 3-manifolds.
In order to extract homology cobordism invariants, he considered towers of abelian $p$-covers. 
Let $d$ be a power of a prime~$p$. 
For a CW-complex $X$, a pair of a tower of iterated abelian $p$-covers and a homomorphism of the fundamental group of the top cover to $\Z_d$ is called a (\emph{$\Z_d$-valued}) \emph{$p$-structure} for~$X$ \cite{C09}.
(A precise definition is given in Section~\ref{sec:hirzebruch}.) 
We remark that any connected $p$-cover can be obtained as the top cover of a $p$-structure.
For a closed 3-manifold $M$ and a $p$-structure $\T$ for $M$ such that the top cover is zero in the bordism group $\Omega_3(B\Z_d)$, an invariant $\l(M,\T)$ is defined to be the difference between the Witt classes of the $\Q(\zeta_d)$-valued intersection form and the ordinary intersection form of a 4-manifold bounded by the top cover over $\Z_d$, where $\zeta_d=\exp(2\pi\sqrt{-1}/d)$. This lives in the Witt group $L^0(\Q(\zeta_d))$ of nonsingular hermitian forms over~$\Q(\zeta_d)$. 
This $\l(-,-)$ is a homology cobordism invariant in the sense that 
if $M$ and $N$ are homology cobordant, there is a 1-1 correspondence $\T_M \mapsto \T_N$ between $p$-structures for $M$ and $N$, $\l(M,\T_M)$ is defined if and only if $\l(N,\T_N)$ is, and in that case, $\l(M,\T_M)=\l(N,\T_N)$.

A map $f\colon X \to Y$ is called a \emph{$p$-tower map} if pullback gives rise to a 1-1 correspondence 
$$\Phi_f \colon \{p\textrm{-structures for } Y\} \to \{p\textrm{-structures for }X\}.$$
(For a more precise description, see Section~\ref{sec:defining condition}.) 
He found a sufficient condition for a map to be a $p$-tower map: for a map between CW-complexes with finite 2-skeletons, if it induces an isomorphism on the ``algebraic closures'' of their fundamental groups, then the map is a $p$-tower map \cite[Proposition~3.9]{C10}. (More information about the algebraic closure is given in Section~\ref{sec:hF-homology cylinder}.)
This applies to string links as follows. 
A string link $\sigma$ has canonical meridians, which give a meridian map $m\colon\bigvee S^1 \to M_\sigma$ where $M_\sigma$ is the surgery manifold of its closure. A string link is called an \emph{$\hF$-string link} if its meridian map induces an isomorphism on the algebraic closures of the fundamental groups. Using the above, for an $\hF$-string link $\sigma$, 
he defined a concordance invariant $\l_\T(\sigma):=\l(M_\sigma,\Phi_m^{-1}(\T))$ for each $p$-structure $\T$ for $\bigvee S^1$. He also proved that $\l_\T$ is a group homomorphism of the concordance group of $\hF$-string links \cite{C09}.

We apply the Hirzebruch-type invariants in \cite{C10,C09} to homology cylinders. Motivated by \cite{C09}, for homology cylinders, we define invariants parametrized by the $p$-structures for the base surface. For this purpose, we investigate when the composition $\hat{i}\colon \Sigma\xrightarrow{i_+} M \to \hM$, which we call the \emph{marking} for the closure $\hM$, is a $p$-tower map. As stated in the first part of the following theorem, the criterion is exactly the vanishing of our extended Milnor invariants. 

\begin{theorem}
  \leavevmode \Nopagebreak
  \begin{enumerate}
  	\item The marking $\hat{i} \colon \Sigma \to \hM$ is a $p$-tower map if and only if $\tilde\mu_q^{\vphantom{}}(M)$ vanishes for all $q$. In that case, $\l_\T(M):=\l(M, (\Phi_{\hat{i}})^{-1}(\T))$ is well-defined for any $p$-structure $\T$ for~$\Sigma$.
  	\item For any $p$-structure $\T$ for $\Sigma$,
    $$\l_\T \colon \H(\infty) \to L^0(\Q(\zeta_d))$$ 
  is a group homomorphism.
  \end{enumerate}
\end{theorem}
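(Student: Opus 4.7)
The plan for (1) is to match the vanishing of $\tilde\mu_q^{\vphantom{}}(M)$ for all $q$ with the hypothesis of Cha's Proposition~3.9 in \cite{C10}: a map between CW-complexes with finite $2$-skeletons inducing an isomorphism on algebraic closures of fundamental groups is a $p$-tower map. First I would produce a presentation of $\pi_1(\hM)$ out of $F = \pi_1(\Sigma)$ and the $2g+n-1$ distinguished elements whose classes in $F/F_q$ constitute $\tilde\mu_q^{\vphantom{}}(M)$, observing that in the closure these are precisely the elements added or identified across the two markings. The homology cobordism hypothesis ensures that $\hat i_*$ is an iso on $H_1$ and the appropriate surjection on $H_2$; Stallings' theorem then implies that vanishing of $\tilde\mu_q^{\vphantom{}}(M)$ for every $q$ is equivalent to $\hat i_*$ inducing isomorphisms $F/F_q \to \pi_1(\hM)/\pi_1(\hM)_q$ for all $q$, and passing to the pro-nilpotent limit yields an isomorphism on algebraic closures. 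Cha's criterion then gives the $p$-tower conclusion, while the reverse direction is immediate since a $p$-tower map induces isos on all nilpotent quotients, forcing every $\tilde\mu_q^{\vphantom{}}(M)$ to vanish. In that case $(\Phi_{\hat i})^{-1}(\T)$ is well-defined, so $\l_\T(M) := \l(\hM,(\Phi_{\hat i})^{-1}(\T))$ makes sense as in \cite{C10}.

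For (2), the homomorphism property would follow the strategy of Cha's treatment of $\hF$-string links in \cite{C09}. For $M,N \in \H(\infty)$, I would exhibit $\hMN$ as the result of gluing $\hM$ and $\hN$ along an embedded copy of $\Sigma$ coming from the interface where $i_-$ of $M$ meets $i_+$ of $N$ in the product, and verify that $(\Phi_{\hat i})^{-1}(\T)$ on $\hMN$ restricts compatibly to $(\Phi_{\hat i})^{-1}(\T)$ on each factor. Taking $4$-manifolds $W_M, W_N$ that realize $\l_\T(M)$ and $\l_\T(N)$ and gluing them along a $4$-dimensional thickening of the corresponding cover of $\Sigma$ yields a $4$-manifold bounding the top cover of $\hMN$. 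Novikov additivity of signatures together with additivity of the $\Q(\zeta_d)$-valued Witt classes of intersection forms under such gluings then gives $\l_\T(M\cdot N) = \l_\T(M) + \l_\T(N)$.

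The main obstacle I expect is step (1): matching the vanishing of $\tilde\mu_q^{\vphantom{}}(M)$ precisely with $\hat i_*$ being an iso on $q$-th nilpotent quotients. This requires a presentation of $\pi_1(\hM)$ arising from the closure construction that identifies the chosen generators with the $2g+n-1$ distinguished elements --- the symplectic-basis curves together with the longitude analogs --- so that the geometric picture of each being killed or glued lines up exactly with the algebraic content of $\tilde\mu_q^{\vphantom{}}$. A secondary but still delicate point is the additivity step in (2): verifying that the Witt classes of intersection forms add without cross-term defect across the cover of the interface $\Sigma$, which rests on the bijectivity of $\Phi_{\hat i}$ to ensure the cover is built consistently from both sides.
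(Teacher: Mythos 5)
Your proposed route to part (1) via Cha's Proposition~3.9 in \cite{C10} (isomorphism on algebraic closures implies $p$-tower map) contains a genuine gap that the paper explicitly flags as an open problem. You assert that vanishing of $\tilde\mu_q^{\vphantom{}}(M)$ for every $q$, i.e.\ $\hat{i}_*$ inducing isomorphisms $F/F_q \to \pi_1(\hM)/\pi_1(\hM)_q$ for all $q$, implies ``passing to the pro-nilpotent limit yields an isomorphism on algebraic closures.'' This inference is false in general: the algebraic closure $\hG$ is not the pro-nilpotent completion, and isomorphism on all lower central quotients does not yield isomorphism on $\hG$. Indeed the paper states explicitly that $\hH \subseteq \H(\infty)$ ``but we do not know the converse,'' and identifies the question you are implicitly resolving as ``a homology cylinder version of a long-standing conjecture that a link with vanishing $\bar\mu$-invariants is an $\hF$-link in the sense of Levine.'' Cha's Proposition~3.9 gives a \emph{sufficient} condition (isomorphism on $\widehat{\pi_1(-)}$) but a weaker one suffices here, and the paper establishes exactly that weaker statement: if a map is surjective on $\pi_1(-)/\pi_1(-)_q$, then $\Phi_f$ between $p$-structures of order $q$ is well-defined and 1-1 (Lemma~\ref{lemma:well-defined and 1-1}); the criterion for $\T \in \Im\Phi_M$ (Lemma~\ref{lemma:defining condition}) is that each $\tilde\mu_q^{\vphantom{}}(M)_k^{\vphantom{}}$ lies in $F_{(h+1)}/F_q$; and surjectivity of $\Phi_M$ over all order-$q$ structures reduces to the intersection of all finite $p$-power-index subgroups of $F/F_q$ being trivial, which holds because free nilpotent groups are residually $p$ \cite{Gr}. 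Notice that the paper even records the payoff of using the weaker hypothesis: the remark after the unnumbered theorem following Theorem~\ref{theorem:p-tower map} resolves a question from \cite[Remark~6.4]{C10} --- precisely because one does not need the algebraic closure condition.

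For part (2), your gluing construction and Novikov-additivity argument follow the same general strategy as the paper, which builds a standard cobordism $V$ between $\hM \cup \hN$ and $\hMN$ (Lemma~\ref{lemma:cobordism}) and then verifies $[\l_\Q(V)]=[\l_{\Q(\zeta_d)}(V)]=0$ via the image-vanishing criterion (K2). The paper packages the needed vanishing as condition (C5) of Theorem~\ref{theorem:additivity condition}, which is automatic on $\H(\infty)$. You would need to be careful that the ``additivity of Witt classes under gluings'' step is justified by the Mayer--Vietoris computation showing the intersection form of $V$ is Witt-trivial, rather than invoked as a black box, but the structure of the argument is correct.
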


Furthermore, we give a generalization as follows. We consider certain special $p$-structures, which are called \emph{$p$-structures of order $q$}. Roughly, they are $p$-structures factoring through the $q$th lower central series quotient. (See Definition~\ref{definition:of order}).
We prove that the marking $\Sigma\to\hM$ induces a 1-1 correspondence between $p$-structures of order $q$ if and only if $\tilde\mu_q^{\vphantom{}}(M)$ vanishes. 
In this case, we define an invariant $\l_\T(M)$, with value in $\Z[\frac{1}{d}]\otimes_\Z L^0(\Q(\zeta_d))$. In other words, we define  
$$\l_\T \colon \H(q) \to \Z\Big[\frac{1}{d}\Big]\otimes_\Z L^0(\Q(\zeta_d))$$ for each $p$-structure $\T$ for $\Sigma$ of order~$q$.
However, it may not be a homomorphism. We present sufficient conditions on homology cylinders for $\l_\T$ to be additive in Theorem~\ref{theorem:additivity condition}.

We study the structure of $\H(\infty)$ using $\l_\T$ in Section~\ref{sec:effect}.
Performing infection by knots, we construct infinitely many homology cylinders with vanishing extended Milnor invariants, but distinguished by $\l_\T$.

\begin{theorem}
  \label{theorem}
  When $b_1(\Sigma)>1$,
  the abelianization of  $\H(\infty)$ contains $\Z^\infty$.
\end{theorem}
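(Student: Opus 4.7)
The plan is to exhibit an infinite family of homology cylinders in $\H(\infty)$ whose classes are $\Z$-linearly independent in the abelianization, using the Hirzebruch-type homomorphisms $\l_\T$ of the previous theorem as detecting invariants. Since each $\l_\T \colon \H(\infty) \to L^0(\Q(\zeta_d))$ is a group homomorphism, it factors through the abelianization, so it suffices to produce $M_1, M_2, \ldots \in \H(\infty)$ and a single $p$-structure $\T$ for $\Sigma$ such that $\l_\T(M_1), \l_\T(M_2), \ldots$ are $\Z$-linearly independent in $L^0(\Q(\zeta_d))$.

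Starting from the trivial cylinder $\Sigma \times I \in \H(\infty)$, I would build the $M_i$ by infection along a fixed simple closed curve $\eta \subset \operatorname{int}(\Sigma \times I)$ with a sequence of infection knots $K_i$ still to be chosen. I would take $\eta$ null-homotopic in $\Sigma \times I$, so that infection leaves the pro-nilpotent tower of $\pi_1$ unchanged via a Stallings-type argument; hence $\tilde\mu_q^{\vphantom{}}(M_i) = 0$ for every $q$ and $M_i \in \H(\infty)$. The hypothesis $b_1(\Sigma) > 1$ enters precisely here: it permits a $p$-structure $\T$ for $\Sigma$ whose top cover has enough first homology that a null-homotopic $\eta$ nevertheless lifts to components visible to the $\Z_d$-valued character of $\T$. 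When $b_1(\Sigma) \le 1$, the top $p$-cover of $\Sigma$ is too small for this to occur, explaining the stated hypothesis.

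With the $M_i$ in hand, I would compute the differences $\l_\T(M_i) - \l_\T(\Sigma \times I)$ via the standard infection formula for Hirzebruch-type invariants from \cite{C09,C10}: the contribution is a finite sum, indexed by the lifts of $\eta$ in the top cover of the closure of $M_i$, of Witt classes of $\Q(\zeta_d)$-twisted intersection forms on cyclic covers of the exterior of $K_i$. Each such summand is essentially a Levine-Tristram signature of $K_i$ evaluated at a specific $d$-th root of unity determined by $\T$ and the chosen lift. Choosing $\{K_i\}$ to realize $\Z$-independent values of these signature quantities (for instance by iterated connected sums of knots with signature jumps at carefully spaced prime-power roots of unity, in the style of Cha's constructions) yields $\Z$-linear independence of $\{\l_\T(M_i)\}$ in $L^0(\Q(\zeta_d))$. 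The homomorphism property of $\l_\T$ on $\H(\infty)$ then promotes this to independence of $\{[M_i]\}$ in the abelianization, as desired.

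The main obstacle is the tension between the two demands placed on $\eta$: preservation of every extended Milnor invariant (which pushes $\eta$ as deep as possible into the lower central series, ideally null-homotopic), and production of a nonzero, computable contribution to $\l_\T$ (which requires $\eta$ to be visible in some $p$-cover). Reconciling these is exactly what $b_1(\Sigma) > 1$ enables, by supplying a null-homotopic curve in $\Sigma \times I$ whose image in a second-layer cover of a suitable $p$-structure is nonzero. Verifying the non-degeneracy of the resulting signature sums at enough roots of unity is a delicate but routine step following the blueprint of \cite{C09}.
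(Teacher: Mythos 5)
Your overall strategy — infect the trivial cylinder along a fixed curve by a family of knots and separate the resulting classes with the homomorphisms $\l_\T$ — is exactly the paper's, and the appeal to Lemma~\ref{lemma:knots} for knots with staggered signature jumps is also what the paper does. But the construction breaks at the choice of infection curve.

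You propose taking $\eta$ null-homotopic in $\Sigma\times I$, justified by the desire to leave the pro-nilpotent tower of $\pi_1$ unchanged so that $\tilde\mu_q$ stays zero. That requirement is not only stronger than necessary, it is fatal to the detection step. First, infection along \emph{any} curve already preserves $\tilde\mu_q$, since the induced map $M' \to M$ is a homology isomorphism and therefore an isomorphism on $\pi_1(-)/\pi_1(-)_q$ by Stallings' theorem; this is exactly item (f) in Section~\ref{sec:effect}. Second, if $\eta$ is null-homotopic in $E$ then every lift $\tilde\eta_j$ of $\eta$ in any cover $\widehat{E}_{(h)}$ is null-homotopic in that cover, hence $\psi([\tilde\eta_j])=0$ for \emph{every} character $\psi$. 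In the infection formula of Theorem~\ref{theorem:effect}, each summand then takes the form $[\l_{r_j}(A,\zeta_d^{0})]-[\l_{r_j}(A,1)]=0$, so $\l_\T(M_i)=\l_\T(E)=0$ identically and nothing is detected. No choice of $p$-structure can rescue this; a null-homotopic loop simply cannot be "visible to" a character, regardless of how large $H_1$ of a cover is, so your proposed role for $b_1(\Sigma)>1$ does not exist.

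What the paper does instead is choose $\gamma$ to be nontrivial in $\pi_1(\Sigma)$ but deep in the \emph{derived} series: Lemma~\ref{lemma:infection-curve-and-tower} (from \cite{C09}) produces, when $b_1(\Sigma)>1$, a loop $\gamma$ with $[\gamma]\in\pi_1(\Sigma)^{(h)}$ together with a $p$-tower of height $h$ and an integral character $\phi$ sending at least one lift $\tilde\gamma_j$ to $+1$. The hypothesis $b_1(\Sigma)>1$ is what guarantees that $\pi_1(\Sigma)^{(h)}$ is nontrivial (so such $\gamma$ exists) and that a $p$-tower with the stated visibility can be built; when $b_1(\Sigma)\le 1$ the relevant derived subgroup is trivial and no such $\gamma$ is available. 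Pushing $i_+\circ\gamma$ into $E$ gives the infection curve $\alpha$, and then $E(\alpha,K_i)$ lies in $\H(\infty)$ for free (infection preserves $\tilde\mu_q$, and $E$ is trivial) while $\l_\T$ is computed from the nontrivial lifts by Theorem~\ref{theorem:effect}. The paper further squares, taking $E(\alpha,K_i)^2$, because Theorem~\ref{theorem:infinite rank} simultaneously kills the Cha--Friedl--Kim torsion invariant; for the statement you are proving, about $\H(\infty)$ alone, the squaring is not strictly needed but does no harm. To repair your argument, replace the null-homotopy requirement on $\eta$ with the deep-derived-series condition of Lemma~\ref{lemma:infection-curve-and-tower}, and note that preservation of $\tilde\mu_q$ is automatic for any infection.
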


We define a boundary homology cylinder and an $\hF$-homology cylinder in Section~\ref{sec:boundary homology cylinder} and~\ref{sec:hF-homology cylinder} as analogs of the boundary (string) link and the $\hF$-(string) link, respectively. 
The above theorem also holds on the subgroup consisting of boundary homology cylinders and that consisting of $\hF$-homology cylinders. (See Theorem~\ref{theorem:infinite rank subgroups}.)

Moreover, our method detects homology cylinders that cannot be detected by other invariants we discussed
 before Section~\ref{sec:1.1} as follows:
\begin{theorem}
  Suppose $b_1(\Sigma)>1$. If $n=1$, the intersection of the kernels of Garoufalidis-Levine's homomorphisms $\eta_q$ \cite{GL}, Cha-Friedl-Kim's torsion invariant $\tau$ \cite{CFK}, the extended Milnor invariants $\tilde\mu_q$, Morita's homomorphism $\tilde{\rho}$ \cite{M}, Sakasai's Magnus representations $r_q$ \cite{S}, and Cochran-Harvey-Horn's signature invariants $\rho_q$ \cite{CHH} has infinite rank abelianization. If $n> 1$, the intersection of the kernels of $\eta_q$, $\tau$, $\tilde\mu_q$ (in this case, $\tilde{\rho}$, $r_q$, and $\rho_q$ are not defined) has infinite rank abelianization.
\end{theorem}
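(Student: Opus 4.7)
The plan is to recycle the family of homology cylinders constructed for Theorem~\ref{theorem} and verify that, with a careful choice of base cylinder and infection curve, every other invariant on the list automatically vanishes on this family. Since the $\l_\T$ values already span $\Z^\infty$ in the abelianization of $\H(\infty)$, the conclusion then follows immediately, so the entire proof reduces to a vanishing check for five classical invariants on a single explicit family.

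Concretely, I would write each element of the witnessing family as $M_K = E_\alpha(K)$, obtained from a fixed base cylinder $E$ over $\Sigma = \Sigma_{g,n}$ (for example the trivial cylinder) by infection along a simple closed curve $\alpha \subset E$ using a knot $K$ in $S^3$; the infinite set $\{K_i\}$ is chosen so that the $L^2$-signature data of the $K_i$ that feeds into $\l_\T$ through Cha's Hirzebruch defect is $\Z$-linearly independent. The curve $\alpha$ is placed deep in the derived series of $\pi_1(\hat E)$. Under these choices I would argue as follows. The Garoufalidis--Levine $\eta_q$ is coarser than $\tilde\mu_q$, so its vanishing is automatic from $M_K \in \H(\infty)$. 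Morita's $\tilde\rho$ is a limit of traces of $\eta_q$, so it vanishes as well. The Cha--Friedl--Kim torsion $\tau$ depends only on abelian-cover data, which an infection along $\alpha \in \pi_1(\hat E)^{(1)}$ by a knot in a homology ball leaves unchanged, so $\tau(M_K) = \tau(E) = 0$ once $E$ is chosen torsion-trivial. Sakasai's Magnus representation $r_q$ is built from the action of $\pi_1(\hat E)$ on an appropriate skew-field completion, and infection along $\alpha$ lying in a sufficiently deep derived subgroup acts trivially on this completion and so does not alter the Magnus matrix. Finally, the Cochran--Harvey--Horn $L^2$-signature defect $\rho_q$ is a homomorphism on $\Ker r_q$, and its change under infection is an $L^2$-signature of $K$ computed in a representation that factors through a quotient of $\pi_1(\hat E)$ killing $\alpha$, and therefore vanishes.

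The main obstacle is the compatibility of these vanishings with the nontriviality of $\l_\T$: I need a single $\alpha$ that is deep enough in the derived (and mixed-coefficient) series of $\pi_1(\hat E)$ to be invisible to $r_q$ and $\rho_q$, yet still visible to an iterated $p$-cover tower used by some $\l_\T$. This is possible precisely because $\l_\T$ is defined via towers of abelian $p$-covers, which detect derived-series information strictly finer than the solvable and nilpotent quotients underlying the other invariants; one can therefore push $\alpha$ far enough into $\pi_1(\hat E)^{(k)}$ to kill the finitely many representations needed for each fixed $q$ of $r_q$ and $\rho_q$ while still arranging that $\alpha$ lifts nontrivially to the top of an appropriate $p$-tower, exactly the configuration used in the proof of Theorem~\ref{theorem}. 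Linear independence of the classes $[M_{K_i}]$ in the abelianization of the common kernel is then inherited from the $\Z^\infty$ already produced there. The case $n>1$ requires only the first three vanishings, since $\tilde\rho$, $r_q$, and $\rho_q$ are not defined, and is otherwise identical.
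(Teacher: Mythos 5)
Your overall strategy matches the paper: use the family $E(\alpha,K_i)$ and argue that the auxiliary invariants (a)--(f) vanish while $\l_\T$ distinguishes them. But two of your vanishing arguments are wrong, and one of them cannot be patched by ``pushing $\alpha$ deeper.''

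The fatal gap is your claim for the Cochran--Harvey--Horn invariants $\rho_q$. Their formula (quoted as \cite[Proposition~8.11]{CHH} in the paper) reads: if $\alpha\in\pi_1(\hM)_k$ but no power lies in $\pi_1(\hM)_{k+1}$, then $\rho_q(M')-\rho_q(M)=\int_{S^1}\sigma_K(\omega)\,d\omega$ for every $q>k$. This change is an absolute real number that does not depend on $q$ and cannot be made zero by choosing $\alpha$ deeper in the derived (or lower-central) series --- for any fixed $\alpha$ there is some $k$, and all $\rho_q$ with $q>k$ pick up the same nonzero average signature. Your phrase ``computed in a representation that factors through a quotient killing $\alpha$'' is simply false once $q>k$. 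The paper's way out is Lemma~\ref{lemma:knots}(3): the knots $K_i$ are chosen with $\int_{S^1}\sigma_{K_i}(\omega)\,d\omega=0$, which kills $\rho_q$ for \emph{every} level $k$ and \emph{every} $q$, while still allowing $\sigma_{K_i}(\zeta_{d_i})>0$ at the specific roots of unity needed to make $\l_\T$ large. This tension --- vanishing Levine--Tristram average but nonvanishing value at chosen $p$-power roots of unity --- is the whole point of that lemma and is completely absent from your plan.

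Two further remarks. For Sakasai's $r_q$ you do not need depth at all: the paper observes that the homology equivalence $f\colon M'\to M$ commuting with the markings already forces $r_q(M')=r_q(M)$ for \emph{any} infection curve, by applying $H_1(-,\ast;Q(F/F_q))$ to the same commutative triangle used in (a). Your ``sufficiently deep derived subgroup'' condition would, in principle, require a single $\alpha$ in $\bigcap_k\pi_1^{(k)}$, which the paper never assumes or needs. Finally, the paper works with the squared cylinders $E(\alpha,K_i)^2$ precisely so that the Cha--Friedl--Kim invariant $\tau$ dies in $Q(H)^\times/\pm HAN$ via the self-conjugacy $\overline{\tau}\doteq\tau$ of Remark~\ref{remark:torsion}; your replacement (``$\alpha$ nullhomologous so $\tau$ is unchanged'') may be repairable, but as written it silently skips the step where the squaring and the involutive structure of the target enter.
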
 

\subsection{Solvable filtration of homology cylinders}
In Section~\ref{sec:cobordisms}, we investigate some other cobordisms of homology cylinders. 
Whitney towers and gropes play a key role in the study of topology of 4-manifolds and concordance of knots and links.
In \cite{COT}, Cochran, Orr, and Teichner introduced solvability of knots and related it to Whitney towers in 4-manifolds. Their filtrations on knots and links have been much studied as an approximation of sliceness. 
To study 3-manifolds with nonempty boundary, Cha defined Whitney tower cobordism and solvable cobordism of bordered 3-manifolds~\cite{C12}. Applying his definition to homology cylinders, it is straightforward to define the notion of $(r)$-solvable homology cylinders for $r\in \frac{1}{2}\Z_{\geq 0}$.
We show that $\l_\T$ can be used as obstructions to the solvability of homology cylinders:
\begin{theorem}
  Let $M \in \H(q)$ and $\T$ be a $p$-structure of height~$\leq h$ for $\Sigma$ of order~$q$. 
  If either \begin{enumerate}
	\item $M$ is $(h+1)$-solvable, or
	\item $M$ is $(h.5)$-solvable and satisfies one of \textnormal{(C1)--(C5)} of Theorem~\ref{theorem:additivity condition},
  \end{enumerate}	
  then $\l_\T(M)$ vanishes.
\end{theorem}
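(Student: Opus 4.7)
The plan is to follow the strategy Cha developed for relating Hirzebruch-type invariants to Whitney tower bordisms in \cite{C10, C12}, transported to our closure construction for homology cylinders. Since $M \in \H(q)$ and $\T$ is of order $q$, the vanishing of the extended Milnor invariants of appropriate level makes the marking $\hat{i}\colon\Sigma\to\hM$ into a $p$-tower map with respect to $\T$, so $\Phi_{\hat{i}}^{-1}(\T)$ is a well-defined $p$-structure on $\hM$ and $\l_\T(M)$ is represented by the Witt-class difference of the twisted and ordinary intersection forms on a 4-manifold bounded by the top iterated $p$-cover.

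Under hypothesis (1), $(h+1)$-solvability of $M$ produces, via the bordered-3-manifold framework of \cite{C12}, a compact 4-manifold $W$ with $\partial W$ appropriately identified with $\hM$ (after the standard capping or surgery convention) and equipped with an $(h+1)$-stage Whitney tower. The first key step is to show that $\Phi_{\hat{i}}^{-1}(\T)$ extends to a $p$-structure on~$W$. The iterated $p$-cover tower defining $\T$ has height at most $h$, so each level is classified by a map into a solvable group of derived length at most $h$; the $(h+1)$-stage Whitney tower places the obstructions to extension inside commutator subgroups that sit below the relevant kernels, and an induction up the $p$-tower, initialized using $M\in\H(q)$ and the order-$q$ assumption on $\T$, propagates the extension across~$W$. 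Once the extension is fixed, $\l_\T(M)$ is computed from the top iterated $p$-cover $\widetilde W$. The Whitney tower lifts to $\widetilde W$ and, by the standard Witt-class vanishing argument of \cite{C10}, produces a half-rank family of dual surfaces whose twisted and untwisted intersection data agree modulo degenerate summands, forcing the difference class to vanish in $L^0(\Q(\zeta_d))$.

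For case (2), an $(h.5)$-stage Whitney tower yields only a weaker structural bound, so the Witt-class calculation on a single bounding $W$ gives vanishing only up to an error that is additive in~$M$. The additivity hypotheses (C1)--(C5), via Theorem~\ref{theorem:additivity condition}, ensure that $\l_\T$ is a genuine homomorphism on the subgroup of $\H(q)$ containing~$M$; this additive error is then absorbed, after inverting~$d$ (which is built into the target $\Z[\tfrac{1}{d}]\otimes L^0(\Q(\zeta_d))$), to give $\l_\T(M)=0$.

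The main obstacle is Step 2 of case (1), the extension of the iterated $p$-cover $p$-structure over the $(h+1)$-solvable 4-manifold~$W$. This is where the two filtrations (the height filtration of $p$-structures and the Whitney tower height) must be carefully aligned, and where the compatibility between $M \in \H(q)$ and the order-$q$ condition on $\T$ is indispensable to start and propagate the induction up the $p$-tower. Once this extension lemma is in place, both the vanishing in case (1) and the additivity-based reduction in case (2) follow by adapting Cha's machinery from \cite{C10, C12} to the present setting.
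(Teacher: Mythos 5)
The paper's own proof is extremely short: it simply observes that if (C2) of Theorem~\ref{theorem:additivity condition} holds at level $h+1$, then $H_1(\Sigma_{(t)};\zp)\cong H_1(\hM_{(t)};\zp)$ for $t\le h+1$, so $H_1(\hM_{(t)})$ is $p$-torsion free and $\rank H_1(\hM_{(t)};\Q)=\rank H_1(\Sigma_{(t)};\Q)$; it then cites \cite[Theorem~8.2]{C10} for case (1) and \cite[Theorem~3.2]{C09} for case (2). You are attempting to re-derive Cha's solvability-obstruction theorems from scratch. The case-(1) outline is roughly in the spirit of what those theorems do, but the case-(2) description contains a genuine conceptual error.

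You assert that in case (2) the conditions (C1)--(C5) ``ensure that $\l_\T$ is a genuine homomorphism on the subgroup of $\H(q)$ containing $M$,'' and that an ``additive error is then absorbed, after inverting $d$ \ldots to give $\l_\T(M)=0$.'' This is not the role of (C1)--(C5) here. The vanishing statement is about a single homology cylinder $M$, not a product, so additivity of $\l_\T$ provides no leverage whatsoever. What (C2) actually buys is purely homological: it forces $H_1(\hM_{(t)};\zp)\cong H_1(\Sigma_{(t)};\zp)$ at every level $t\le h+1$, hence $H_1(\hM_{(t)})$ is $p$-torsion free and has the correct rational rank. These are exactly the hypotheses under which Cha's $(h.5)$-solvability obstruction theorem applies, because they control the rank of $H_2$ of the iterated $p$-covers of a bounding 4-manifold well enough to force the twisted and untwisted Witt classes to agree. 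Your invocation of ``inverting $d$'' to absorb an error is also a misunderstanding: $d$ is inverted in the target only because a priori one must pass to multiples of $\hM$ to bound over $B\Z_d$; under (C2), as the paper notes via Lemma~\ref{lemma:p-torsion-free}, one in fact gets that $\hM_{(h)}$ itself bounds and $\l_\T(M)\in L^0(\Q(\zeta_d))$ with no denominators, so there are no fractional error terms to absorb. You would need to replace the additivity-based reduction with the genuine homological step the paper carries out (verifying the $p$-torsion-free and rank hypotheses from (C2)) and then appeal to the intersection-form vanishing argument in the $(h.5)$-solvable bounding manifold directly.
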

Here the height of a $p$-structure for $X$ is the height of the tower of iterated $p$-covers, a precise definition is given at the beginning of Section~\ref{sec:hirzebruch}.

We also refine Theorem~\ref{theorem}. Let $\F^{\H(\infty)}_{(r)}$ denote the subgroup consisting of $(r)$-solvable homology cylinders in $\H(\infty)$ for each $r \in \frac{1}{2}\Z_{\geq 0}$.

\begin{theorem}
  When $b_1(\Sigma) > 1$, the abelianization of ${\F^{\H(\infty)}_{(h)}}/{\F^{\H(\infty)}_{(h.5)}}$ is of infinite rank. 
\end{theorem}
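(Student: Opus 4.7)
The plan is to sharpen the construction used in the earlier infinite-rank theorem for $\H(\infty)$ so that, in addition to distinguishing elements by $\l_\T$, each element sits at exactly the $(h)$-level of the solvable filtration. Concretely, I will produce a family $\{M_k\}_{k\in \N}$ of homology cylinders by iterated knot infection on the trivial cylinder $\Sigma \times I$, arranging infection curves to lie sufficiently deep in the derived series of $F=\pi_1(\Sigma)$ so that the resulting cylinders are $(h)$-solvable and lie in $\H(\infty)$, while infecting by knots whose Levine--Tristram signatures at prime-power roots of unity are linearly independent so that $\l_\T$ obstructs $(h.5)$-solvability.

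More precisely, I would begin with a curve $\gamma \subset \Sigma \times I$ lying in $(\pi_1(\Sigma\times I))^{(h)}$ and unknotted in the ambient 3-manifold. Because $b_1(\Sigma)>1$, such $\gamma$ can be chosen so that its homotopy class is nontrivial and supports infection in the standard way. Infecting by a knot $K_k$ with vanishing Arf invariant produces a homology cylinder $M_k$; the standard grope/Whitney-tower argument (applied in a 4-dimensional bordered setting as in the work on Whitney tower cobordism of bordered 3-manifolds cited in Section~\ref{sec:cobordisms}) shows that $M_k$ is $(h)$-solvable. Since $\gamma \in F^{(h)} \subset F_{2^h}$, infection along $\gamma$ does not change the images of the longitudinal/symplectic elements in any $F/F_q$, so $\tilde\mu_q^{\vphantom{}}(M_k)=0$ for every $q$, which places $M_k$ in $\F^{\H(\infty)}_{(h)}$.

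Next I would choose a $\Z_d$-valued $p$-structure $\T$ for $\Sigma$ of height exactly $h$ and of order $q$ (large $q$), and compute $\l_\T(M_k)$. By the standard effect of infection on Hirzebruch-type defects, the value splits as $\l_\T(\Sigma\times I) + $ a contribution from $K_k$ that reduces to the Levine--Tristram signature of $K_k$ evaluated at a root of unity $\zeta$ determined by $\T$ along $\gamma$ in the top $p$-cover. Choosing the $K_k$ to be a family (for example torus knots or the explicit family from the earlier infinite-rank theorem) whose signature jumps at prime-power roots are $\Q$-linearly independent in $\Z[\frac{1}{d}]\otimes L^0(\Q(\zeta_d))$, the values $\l_\T(M_k)$ become linearly independent. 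Since $M_k$ lies in $\H(q)$ and the infection configuration is arranged to satisfy one of the additivity hypotheses (C1)--(C5) of Theorem~\ref{theorem:additivity condition}, any $(h.5)$-solvable element with the same additivity property has vanishing $\l_\T$ by the obstruction theorem stated just above; thus the classes of the $M_k$ are $\Q$-linearly independent in the abelianization of $\F^{\H(\infty)}_{(h)}/\F^{\H(\infty)}_{(h.5)}$, yielding the infinite-rank conclusion.

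The main obstacle is the simultaneous calibration of two competing constraints: to be $(h)$-solvable the infection curve $\gamma$ must sit deep in the derived series and bound a half-basis of disjoint surfaces in a 4-dimensional cobordism of the requisite height, whereas to produce nontrivial $\l_\T$ at a $p$-structure of height $\leq h$ the curve $\gamma$ must not lie in the kernel of the homomorphism of $F$ to the top cover of that $p$-structure, and moreover the additivity hypotheses of Theorem~\ref{theorem:additivity condition} must be verified for the resulting cylinders. Executing the construction therefore requires choosing $\gamma$ as a specific commutator of symplectic basis curves of $\Sigma$ (available because $b_1(\Sigma)>1$), choosing $\T$ so that the associated character detects this commutator in the $h$-th $p$-cover, and verifying the additivity condition by exhibiting explicit surfaces in the cover bounded by the relevant cycles; this is the delicate step and is where the hypothesis $b_1(\Sigma)>1$ is genuinely used.
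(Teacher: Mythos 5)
Your overall strategy matches the paper's proof: infect the trivial homology cylinder $E$ along a curve $\gamma\subset\Sigma$ with $[\gamma]\in F^{(h)}$ by a family of knots $K_i$ with vanishing Arf invariant and ``staircase'' Levine--Tristram signatures, conclude $(h)$-solvability via the Cochran--Orr--Teichner argument, and distinguish the classes in the abelianization of $\F^{\H(\infty)}_{(h)}/\F^{\H(\infty)}_{(h.5)}$ by the homomorphism $\l_\T$ together with the obstruction that $(h.5)$-solvable cylinders satisfying the additivity hypotheses have vanishing $\l_\T$.

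Two points in your write-up are off, however. First, the reason you give for $\tilde\mu_q^{\vphantom{}}(M_k)=0$ for all $q$ is not correct: the inclusion $F^{(h)}\subset F_{2^h}$ would at best control finitely many nilpotent quotients, and in any case the relevant fact is independent of where $\gamma$ sits in the lower central or derived series. The correct argument is that infection induces a degree-one map $M_k\to E$ that is a $\Z$-homology equivalence and restricts to the identity on boundaries, hence (via Stallings) induces isomorphisms on all nilpotent quotients $\pi_1(-)/\pi_1(-)_q$ and carries $\tilde\l(M_k)_j^{\vphantom{}}$ to $\tilde\l(E)_j^{\vphantom{}}=1$; thus $\tilde\mu_q^{\vphantom{}}(M_k)=\tilde\mu_q^{\vphantom{}}(E)=0$ for every $q$. (The depth of $\gamma$ in the derived series is used only for $(h)$-solvability, not for membership in $\H(\infty)$.) Second, you flag the simultaneous choice of $\gamma\in F^{(h)}$ and of a $p$-structure $\T$ of height $h$ whose character ``sees'' enough lifts of $\gamma$ as a delicate construction to be carried out; in the paper this is not re-proved but delegated to Lemma~\ref{lemma:infection-curve-and-tower} (a prior result of Cha, valid precisely when $b_1(\Sigma)>1$), so your argument would close once you invoke it rather than promise to exhibit surfaces. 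Also, despite the phrase ``iterated knot infection,'' the construction performs a single infection per cylinder (and then possibly squares to arrange vanishing of auxiliary invariants, which is not even needed when only $\l_\T$ is used).
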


We remark that the analogs hold for the solvable filtration of the subgroups of boundary homology cylinders and that of $\hF$-homology cylinders. See Theorem~\ref{theorem:solvable filtration}.

The paper is organized as follows. 
In Section~\ref{sec:definition}, we recall basic definitions and examples of homology cylinders and their homology cobordism groups.
In Section~\ref{sec:representation}, we define extended Milnor invariants on $\H_{g,n}$.
In Section~\ref{sec:subgroups}, we study the filtration $\H(q)$ associated to the extended Milnor invariants and subgroups consisting of boundary homology cylinders, $\hF$-homology cylinders.
In Section~\ref{sec:hirzebruch}, we define Hirzebruch-type invariants of homology cylinders and give sufficient conditions for additivity of the invariants.
In Section~\ref{sec:effect}, by investigating the effect of infection, we detect a rich structure of $\H$ which has not been detected previously. 
Finally in Section~\ref{sec:cobordisms}, we study nilpotent cobordism and solvable filtrations of homology cylinders using our invariants.

\subsubsection*{Acknowledgements}
The author thanks her advisor Jae Choon Cha for his advice and guidance.
This research was partially supported by NRF grants 2013067043 and 2013053914.

\section{Homology cylinders and their homology cobordism groups}
  \label{sec:definition}
We recall precise definitions about homology cylinders. Let $\Sigma=\Sigma_{g,n}$ be a surface of $g$ genus with $n$ boundary components.
\begin{definition}
  A \emph{homology cylinder over} $\Sigma$ consists of a 3-manifold $M$ with two embeddings $i_+^{\vphantom{}},~i_-^{\vphantom{}}\colon \Sigma \hookrightarrow \partial M$, called \emph{markings}, such that
  \begin{enumerate}
    \item   $i_+^{\vphantom{}}|_{\partial \Sigma} = i_-^{\vphantom{}}|_{\partial \Sigma}$,
    \item   $i_+\cup i_- \colon \Sigma\cup_\partial (-\Sigma) \to \partial M $ is an orientation-preserving homeomorphism, and
    \item   $i_+^{\vphantom{}}, i_-^{\vphantom{}}$ induce isomorphisms $H_*(\Sigma;\Z)\to H_*(M;\Z)$.
  \end{enumerate}
We denote a homology cylinder by $(M,i_+^{\vphantom{}},i_-^{\vphantom{}})$ or simply by~$M$.
\end{definition}

Two homology cylinders $(M,i_+^{\vphantom{}},i_-^{\vphantom{}})$ and $(N,j_+^{\vphantom{}},j_-^{\vphantom{}})$ over $\Sigma_{g,n}$ are said to be \emph{isomorphic} if there exists an orientation-preserving homeomorphism $f\colon M \to N$ satisfying $j_+^{\vphantom{}}=f \circ i_+^{\vphantom{}}$ and $j_-^{\vphantom{}}=f \circ i_-^{\vphantom{}}$. Denote by $\C_{g,n}$ the set of all isomorphism classes of homology cylinders over~$\Sigma_{g,n}$. We define a product operation on $\C_{g,n}$ by
$$(M,i_+^{\vphantom{}},i_-^{\vphantom{}})\cdot (N,j_+^{\vphantom{}},j_-^{\vphantom{}}):=(M\cup_{i_-^{\vphantom{}}\circ (j_+^{\vphantom{}})^{-1}} N, i_+^{\vphantom{}}, j_-^{\vphantom{}})$$
for $(M,i_+^{\vphantom{}},i_-^{\vphantom{}}),~(N,j_+^{\vphantom{}},j_-^{\vphantom{}}) \in \C_{g,n}$, which endows $\C_{g,n}$ with a monoid structure. The identity is $(\Sigma_{g,n} \times I/(z,0)=(z,t)~ (z\in\partial\Sigma, t\in I), \id \times 1, \id\times 0)$. For later use, we denote this trivial homology cylinder by~$E$.

\begin{definition}
  Two homology cylinders $(M,i_+^{\vphantom{}},i_-^{\vphantom{}})$ and $(N,j_+^{\vphantom{}},j_-^{\vphantom{}})$ over $\Sigma_{g,n}$ are said to be \emph{homology cobordant} if there exists a 4-manifold $W$ such that
  \begin{enumerate}
    \item   $\partial W = M \cup (-N) /\sim$, where $\sim$ identifies $i_+^{\vphantom{}}(x)$ with $j_+^{\vphantom{}}(x)$ and $i_-^{\vphantom{}}(x)$ with $j_-^{\vphantom{}}(x)$ for all $x\in\Sigma_{g,n}$, and
    \item   the inclusions $M \hookrightarrow W$, $N \hookrightarrow W$ induce isomorphisms on the integral homology.
  \end{enumerate}
\end{definition}
We denote by $\H_{g,n}$ the set of homology cobordism classes of elements of~$\C_{g,n}$. By abuse of notation, we also write $M$ for the class of~$M$. The monoid structure on $\C_{g,n}$ descends to a group structure on~$\H_{g,n}$, with $(M,i_+^{\vphantom{}},i_-^{\vphantom{}})^{-1}=(-M,i_-^{\vphantom{}},i_+)$. We call this group the \emph{homology cobordism group} of homology cylinders. Actually, there are two kinds of groups $\H_{g,n}^{\mathrm{smooth}}$ and $\H_{g,n}^{\mathrm{top}}$ depending on whether the homology cobordism is smooth or topological, and there exists a canonical epimorphism $\H_{g,n}^{\mathrm{smooth}} \twoheadrightarrow \H_{g,n}^{\mathrm{top}}$ whose kernel contains an abelian group of infinite rank~\cite{CFK}. In this paper, however, the author does not distinguish the two cases since everything holds in both cases.

Both $\H_{0,0}$ and $\H_{0,1}$ are isomorphic to the group of homology cobordism classes of integral homology 3-spheres. The group $\H_{0,2}$ is isomorphic to the concordance group of framed knots in homology 3-spheres. For $n\geq 3$, $\H_{0,n}$ is isomorphic to the concordance group of framed ($n-1$)-component string links in homology 3-balls, or equivalently, in homology cylinders over $D^2=\Sigma_{0,1}$. Similarly, $\H_{g,n}$ can be considered to be the concordance group of framed ($n-1$)-component string links in homology cylinders over~$\Sigma_{g,1}$. The fact that the mapping class group over $\Sigma_{g,n}$ is a subgroup of $\H_{g,n}$ implies $\H_{g,n}$ is non-abelian except $(g,n)=(0,0), (0,1)$ and~$(0,2)$.

For any homology cylinder $M$, there is an associated closed manifold $\hM$ obtained from $M$ by identifying $i_+^{\vphantom{}}(z)$ and~$i_-^{\vphantom{}}(z)$ for each $z\in \Sigma$. We call it the \emph{closure} of~$M$. When $M$ is considered as an exterior of a framed string link, $\hM$ is just the surgery manifold of the closure of the string link. Both $i_+^{\vphantom{}}, i_-^{\vphantom{}}$ composed with the quotient map give an embedding $\hat{i}\colon \Sigma\to\hM$, which we call the \emph{marking} for~$\hM$.

\section{Generalization of Milnor invariants and Garoufalidis-Levine homomorphisms}
  \label{sec:representation}
Let $\partial_1^{\vphantom{}},\partial_2^{\vphantom{}},\ldots,\partial_n^{\vphantom{}}$ be the boundary components of~$\Sigma$. Choose a basepoint $*$ of $\Sigma$ on $\partial_n^{\vphantom{}}$ and fix a generating set $\{x_1^{\vphantom{}},\ldots,x_{n-1}^{\vphantom{}},m_1^{\vphantom{}},\ldots,m_g^{\vphantom{}},l_1^{\vphantom{}},\ldots,l_g^{\vphantom{}}\}$ for $\pi_1(\Sigma,*)$ as in Figure~\ref{figure:Sigma} such that $x_i^{\vphantom{}}$ is homotopic to the $i$th boundary component $\partial_i^{\vphantom{}}$ and $m_j^{\vphantom{}}$, $l_j^{\vphantom{}}$ correspond to a meridian and a longitude of the $j$th handle. Since our $n$ is nonzero, the group is free on the above $2g + n -1$ generators. Let $F=\pi_1(\Sigma,*).$ For the generators in Figure~\ref{figure:Sigma}, the element $[\partial_n^{\vphantom{}}]\in\pi_1(\Sigma,*)$ is represented by $\prod_i x_i^{\vphantom{}}\prod_j[m_j^{\vphantom{}},l_j^{\vphantom{}}]$. We will use this later, to prove Theorem~\ref{theorem:rank}.

\begin{figure}[h]
  \begin{center}
    \includegraphics[scale=.9]{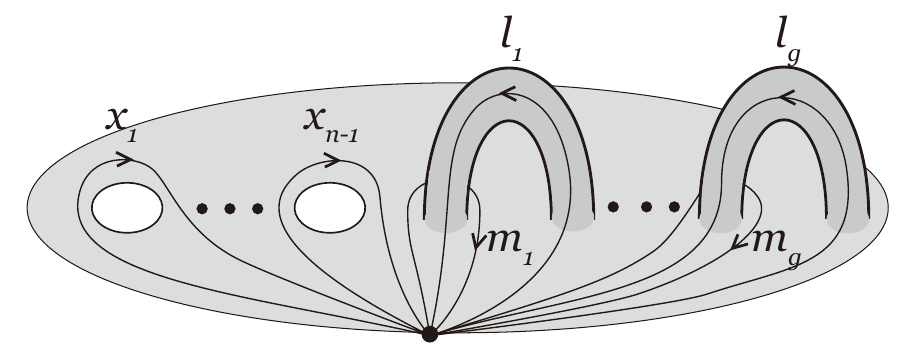}
    \caption{A generating set for $\pi_1(\Sigma_{g,n})$}
    \label{figure:Sigma}
  \end{center}
\end{figure}

\subsection{Extended Milnor invariants on $\H_{g,n}$}
  \label{sec:longitude}
First, we define Milnor invariants of homology cylinders similarly to those of string links. Let $(M, i_+^{\vphantom{}},i_-^{\vphantom{}})$ be a homology cylinder over~$\Sigma$. The chosen $x_i^{\vphantom{}}$ is of the form $[\alpha_i^{\vphantom{}}\cdot\beta_i^{\vphantom{}}\cdot \alpha_i^{-1}]$ for a closed path $\beta_i^{\vphantom{}}\colon I \to I/\partial I\xrightarrow{\simeq} \partial_i^{\vphantom{}}$ such that the latter map is a homeomorphism and a path $\alpha_i^{\vphantom{}}$ from $*$ to $\beta_i^{\vphantom{}}(0)$. The orientation of $\beta_i^{\vphantom{}}$ is determined by $x_i^{\vphantom{}}$.
Consider the loop $(i_+^{\vphantom{}}\circ\alpha_i^{\vphantom{}})\cdot(i_-^{\vphantom{}}\circ\alpha_i^{-1})$ in $M$. If $M$ were a framed string link exterior, this loop would be its $i$th longitude. We define $\l_i^{\vphantom{}}$ to be the class of the loop $(i_+^{\vphantom{}}\circ\alpha_i^{\vphantom{}})\cdot(i_-^{\vphantom{}}\circ\alpha_i^{-1})$ in $\pi_1(M,i_+^{\vphantom{}}(*))$. It is independent of the choice of $\alpha_i^{\vphantom{}}$, and it depends only on the choice of $x_i^{\vphantom{}}$ in~$\pi_1(\Sigma,*)$. We will show this at the end of this subsection. By Stallings' theorem~\cite{St}, $i_+^{\vphantom{}}$ induces an isomorphism
$$F/F_q=\pi_1(\Sigma) / \pi_1(\Sigma)_q  \xrightarrow[(i_+^{\vphantom{}})_{*q}^{\vphantom{}}]{\cong} \pi_1(M) / \pi_1(M)_q$$ for every $q\in\mathbb{N}$. We define $\mu_q^{\vphantom{}}(M)_i^{\vphantom{}}$ to be the inverse image of $\l_i^{\vphantom{}}$ in $F/F_q$ and $\mu_q^{\vphantom{}}(M)$ to be the ($n-1$)-tuple $(\mu_q^{\vphantom{}}(M)_1^{\vphantom{}},\ldots,\mu_q^{\vphantom{}}(M)_{n-1}^{\vphantom{}})$ $\in$ $(F/F_q)^{n-1}$. Also, $\mu(M)$ can be defined to be $((\mu_q^{\vphantom{}}(M)_1^{\vphantom{}})_{q\in\N}^{\vphantom{}},\ldots, (\mu_q^{\vphantom{}}(M)_{n-1}^{\vphantom{}})_{q\in\N}^{\vphantom{}})$ as an element of~$(\varprojlim_q F/F_q)^{n-1}$. 
For $(i_\pm^{\vphantom{}})_*\colon F\to \pi_1(M)$ induced by markings $i_\pm^{\vphantom{}}$, the element $\l_i^{\vphantom{}}$ indicates the difference between $(i_+^{\vphantom{}})_*(x_i^{\vphantom{}})$ and $(i_-^{\vphantom{}})_*(x_i^{\vphantom{}})$ in $\pi_1(M)$ as follows:
$$ (i_-^{\vphantom{}})_*(x_i^{\vphantom{}}) = \l_i^{-1}\cdot (i_+^{\vphantom{}})_*(x_i^{\vphantom{}}) \cdot \l_i^{\vphantom{}}.$$
Also $\mu_q^{\vphantom{}}(M)_i^{\vphantom{}}$ indicates the difference of two markings on $x_i^{\vphantom{}}$ in~$F/F_q$:
\begin{equation}
  ((i_+^{\vphantom{}})_{*q}^{-1}\circ(i_-^{\vphantom{}})_{*q}^{\vphantom{}})(x_i^{\vphantom{}})=\mu_q^{\vphantom{}}(M)_i^{-1}\cdot x_i^{\vphantom{}}\cdot   \mu_q^{\vphantom{}}(M)_i^{\vphantom{}}.
\end{equation}
For the case of $g=0$, the invariant $\mu_q^{\vphantom{}}$ is equivalent to all the Milnor $\bar\mu$-invariants of length $\leq q$ for framed string links.

Now we extend this for the remaining generators $m_j^{\vphantom{}}$ and $l_j^{\vphantom{}}$ of~$F$. Denote $\l'_j=(i_+^{\vphantom{}})_*(m_j^{\vphantom{}})\cdot (i_-^{\vphantom{}})_*(m_j^{-1})$ and  $\l''_j=(i_+^{\vphantom{}})_*(l_j^{\vphantom{}})\cdot (i_-^{\vphantom{}})_*(l_j^{-1})$. Let $\mu'_q(M)_j^{\vphantom{}}$ and $\mu''_q(M)_j^{\vphantom{}}$ be the inverse images of $\l'_j$ and $\l''_j$ under the above isomorphism $(i_+^{\vphantom{}})_{*q}^{\vphantom{}}$, respectively, for $j=1,\ldots,g.$ Then, clearly 
$$(i_-^{\vphantom{}})_*(m_j^{\vphantom{}})=\l_j'^{-1} \cdot (i_+^{\vphantom{}})_*(m_j^{\vphantom{}})\;, \quad  (i_-^{\vphantom{}})_*(l_j^{\vphantom{}})=\l_j''^{-1}\cdot (i_+^{\vphantom{}})_*(l_j^{\vphantom{}})\quad  \textrm{  in } \pi_1(M),$$
and hence
\begin{align}
((i_+^{\vphantom{}})_{*q}^{-1}\circ(i_-^{\vphantom{}})_{*q}^{\vphantom{}})(m_j^{\vphantom{}})&=\mu_q'(M)_j^{-1}\cdot m_j^{\vphantom{}} , \\ ((i_+^{\vphantom{}})_{*q}^{-1}\circ(i_-^{\vphantom{}})_{*q}^{\vphantom{}})(l_j^{\vphantom{}})&=\mu_q''(M)_j^{-1}\cdot l_j^{\vphantom{}}\quad\textrm{  in } F/F_q . \nonumber
\end{align}

We denote by $\tilde\l(M)$ or simply by $\tilde{\l}$ the ($2g+n-1$)-tuple of $\l_i^{\vphantom{}}, \l_j'$, and $\l_j''$ of $\pi_1(M)$ and by $\tilde\mu_q^{\vphantom{}}(M)$ the ($2g+n-1$)-tuple of $\mu_q^{\vphantom{}}(M)_i^{\vphantom{}},\mu_q'(M)_j^{\vphantom{}}$, and $\mu''_q(M)_j^{\vphantom{}}$ of~$F/F_q$. the total $\tilde\mu(M)$ is defined to be $((\tilde\mu_q^{\vphantom{}}(M)_1^{\vphantom{}})_{q\in\N}^{\vphantom{}},\ldots, (\tilde\mu_q^{\vphantom{}}(M)_{2g+n-1}^{\vphantom{}})_{q\in\N}^{\vphantom{}})$ as an element of~$(\varprojlim_q F/F_q)^{2g+n-1}$. 

Remark that $\tilde\l(M)$ plays an important role in studying the fundamental group of~$\hM$. 
More precisely, $\pi_1(\hM)$ can be obtained from $\pi_1(M)$ by killing all~$\tilde\l(M)_k^{\vphantom{}}$. Hence, $\tilde\mu_q^{\vphantom{}}(M)$ vanishes if and only if $\hat{i}$ induces an isomorphism $F/F_q\cong \pi_1(\hM)/\pi_1(\hM)_q$. It is similar to case of (string) links: for a (string) link $L$, the Milnor invariants of length$\leq q$ vanish if and only if a meridian map $\bigvee S^1\to M_L$ induces an isomorphism on $\pi_1(-)/\pi_1(-)_q$ where $M_L$ is the surgery manifold (of the closure).  
In this sense, $\tilde\mu$ is a more appropriate analog of Milnor's $\bar\mu$-invariants of string links, compared with~$\mu$. 

\begin{theorem}
  For any $q$, $\tilde\mu_q^{\vphantom{}}$ is a homology cobordism invariant.
  In other words, we have
  $$\tilde\mu_q^{\vphantom{}}\colon \H_{g,n}\to (F/F_q)^{2g+n-1}.$$
\end{theorem}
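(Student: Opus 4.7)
The strategy is the standard Stallings-theorem argument: a homology cobordism induces isomorphisms on each nilpotent quotient of the fundamental group, so every element of $\pi_1/\pi_1_q$ that can be described purely in terms of the markings is preserved. Suppose $(M,i_+^{\vphantom{}},i_-^{\vphantom{}})$ and $(N,j_+^{\vphantom{}},j_-^{\vphantom{}})$ are homology cobordant via a $4$-manifold $W$. The inclusions $M \hookrightarrow W$ and $N \hookrightarrow W$ are integral homology equivalences by definition, so by Stallings' theorem they induce isomorphisms
$$\pi_1(M)/\pi_1(M)_q \xrightarrow{\cong} \pi_1(W)/\pi_1(W)_q \xleftarrow{\cong} \pi_1(N)/\pi_1(N)_q$$
for every $q$. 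Choose $i_+^{\vphantom{}}(*)=j_+^{\vphantom{}}(*)$ as the basepoint of $W$; this is legitimate because $*\in\partial\Sigma$ and the markings agree on $\partial\Sigma$.

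The key observation is that the loops $\alpha_i^{\vphantom{}}\beta_i^{\vphantom{}}\alpha_i^{-1}$, $m_j^{\vphantom{}}$, $l_j^{\vphantom{}}$ defining $\tilde\lambda$ lie entirely in the image of $\Sigma$ under the markings, and the gluing condition $\partial W = M\cup(-N)/\sim$ identifies $i_\pm^{\vphantom{}}(x)$ with $j_\pm^{\vphantom{}}(x)$ for every $x\in\Sigma$. Hence the composites $\Sigma\xrightarrow{i_\pm^{\vphantom{}}}M\hookrightarrow W$ and $\Sigma\xrightarrow{j_\pm^{\vphantom{}}}N\hookrightarrow W$ coincide as maps into $W$, which implies that the image of $\tilde\lambda(M)$ in $\pi_1(W)$ equals the image of $\tilde\lambda(N)$ coordinate by coordinate.

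Finally, consider the commutative diagram, for each coordinate,
$$\begin{diagram}
\node{} \node{\pi_1(M)/\pi_1(M)_q} \arrow{s,r}{\cong} \\
\node{F/F_q} \arrow{ne,t}{(i_+^{\vphantom{}})_{*q}^{\vphantom{}}} \arrow{e,t}{\cong} \arrow{se,b}{(j_+^{\vphantom{}})_{*q}^{\vphantom{}}} \node{\pi_1(W)/\pi_1(W)_q} \\
\node{} \node{\pi_1(N)/\pi_1(N)_q} \arrow{n,r}{\cong}
\end{diagram}$$
where the two maps from $F/F_q$ into $\pi_1(W)/\pi_1(W)_q$ are equal (again because $i_+^{\vphantom{}}$ and $j_+^{\vphantom{}}$ are identified in $W$). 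The element $\tilde\mu_q^{\vphantom{}}(M)$ is by definition the pullback via $(i_+^{\vphantom{}})_{*q}^{\vphantom{}}$ of the image of $\tilde\lambda(M)$ in $\pi_1(M)/\pi_1(M)_q$, and similarly for $N$. Since the images in $\pi_1(W)/\pi_1(W)_q$ agree and the diagonal map $F/F_q\to\pi_1(W)/\pi_1(W)_q$ is a single well-defined isomorphism, we conclude $\tilde\mu_q^{\vphantom{}}(M)=\tilde\mu_q^{\vphantom{}}(N)$.

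I do not anticipate a real obstacle; the only bookkeeping that requires care is keeping basepoints consistent and confirming that each coordinate of $\tilde\lambda$, including the ones built from $\alpha_i^{\vphantom{}}$, is defined from loops lying in the image of $\Sigma$ so that the agreement of $i_\pm^{\vphantom{}}$ and $j_\pm^{\vphantom{}}$ in $W$ suffices.
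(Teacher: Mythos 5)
Your proposal is correct and takes essentially the same approach as the paper: both observe that the gluing in the cobordism $W$ identifies $i_\pm$ with $j_\pm$, so each coordinate of $\tilde\lambda(M)$ and $\tilde\lambda(N)$ maps to the same element of $\pi_1(W)$, and then invoke Stallings' theorem and the resulting commutative diagram of isomorphisms $F/F_q\to\pi_1/\pi_1{}_q$ to conclude $\tilde\mu_q(M)=\tilde\mu_q(N)$. (One small point of precision: the loop defining $\l_i$ is $(i_+\circ\alpha_i)\cdot(i_-\circ\alpha_i^{-1})$, which uses both markings rather than the single loop $\alpha_i\beta_i\alpha_i^{-1}$; your conclusion still holds since both $i_\pm$ and $j_\pm$ are identified in $W$.)
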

\begin{proof}
  Suppose $W$ is a homology cobordism between homology cylinders $(M, i_+^{\vphantom{}}, i_-^{\vphantom{}})$ and~$(M',i_+',i_-')$.
  The diagram below is commutative where the right maps are induced by natural inclusions. In $W$, $i_+^{\vphantom{}}(z)$ and $i_+'(z)$ are identified, and $i_-^{\vphantom{}}(z)$ and $i_-'(z)$ are identified for each $z\in \Sigma$. Hence $\tilde\l(M)_k^{\vphantom{}} \in \pi_1(M)$ and $\tilde\l(M')_k^{\vphantom{}} \in\pi_1(M')$ correspond to the same element of~$\pi_1(W)$. 
  $$\begin{diagram} \dgARROWLENGTH=1.5em
  	\node[2]{\pi_1(M)}	\arrow{se} \\
  	\node{F} \arrow{ne,l}{(i_+^{\vphantom{}})_*} \arrow{se,r}{(i_+')_*}		\node[2]{\pi_1(W)} \\
  	\node[2]{\pi_1(M')}	\arrow{ne}
  \end{diagram}$$
   Since all the homomorphisms of the diagram induce isomorphisms on~$\pi_1(-)/\pi_1(-)_q$, $\tilde\mu_q^{\vphantom{}}(M)= \tilde\mu_q^{\vphantom{}}(M')$ in $F/F_q$.
\end{proof}

Now, we investigate the well-definedness of $\tilde\mu_q^{\vphantom{}}$ and the effect of change of generating sets.

\begin{proposition}
  The invariant $\tilde\mu_q^{\vphantom{}}$ is independent of the the choice of $\{\alpha_i^{\vphantom{}}\}$ and depends only on (the basepoint $*$ of $\Sigma$ and) the (ordered) generating set $\{x_i^{\vphantom{}},m_j^{\vphantom{}},l_j\}_{i<n,j\leq g}$ of~$\pi_1(\Sigma,*)$. 
\end{proposition}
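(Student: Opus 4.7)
The plan is to separate the two sources of choice—the paths $\alpha_i$ used for the boundary generators $x_i$, and the definitions for the handle generators $m_j, l_j$—and dispatch them separately. The handle case is immediate: $\lambda_j' = (i_+)_*(m_j)\cdot(i_-)_*(m_j^{-1})$ and $\lambda_j'' = (i_+)_*(l_j)\cdot(i_-)_*(l_j^{-1})$ are defined using only the markings and the generating elements themselves, with no auxiliary paths involved, so these components of $\tilde\lambda(M)$ depend only on the ordered generating set.

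For the boundary generators, the substantive step is to show that $\lambda_i$ depends only on $x_i$, not on the specific representative $\alpha_i$. First I would verify the announced identity $(i_-)_*(x_i) = \lambda_i^{-1}(i_+)_*(x_i)\lambda_i$: since $\beta_i$ lies on $\partial_i \subset \partial\Sigma$ where the markings agree, concatenation gives
$$\lambda_i^{-1}(i_+)_*(x_i)\lambda_i = (i_-\circ\alpha_i)(i_+\circ\beta_i)(i_-\circ\alpha_i^{-1}) = (i_-\circ\alpha_i)(i_-\circ\beta_i)(i_-\circ\alpha_i^{-1}) = (i_-)_*(x_i),$$
and this holds for any admissible $\alpha_i$. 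Given two admissible paths $\alpha_i, \alpha_i'$ with $x_i = [\alpha_i\beta_i\alpha_i^{-1}] = [\alpha_i'\beta_i(\alpha_i')^{-1}]$, I would form the based loop $\gamma := \alpha_i\cdot(\alpha_i')^{-1}$ in $\Sigma$. The equality of the two representatives forces $\gamma x_i \gamma^{-1} = x_i$ in $F$. Because $\{x_i, m_j, l_j\}$ is a free basis of $F$ (as noted just before the subsection), $x_i$ is primitive and hence its centralizer in $F$ is $\langle x_i\rangle$, so $\gamma = x_i^k$ for some $k\in\Z$. Writing $\alpha_i = \gamma\cdot\alpha_i'$ and expanding gives
$$\lambda_i = (i_+\circ\gamma)\cdot\lambda_i'\cdot(i_-\circ\gamma^{-1}) = (i_+)_*(x_i)^k\cdot\lambda_i'\cdot(i_-)_*(x_i)^{-k},$$
and applying the identity above to $\lambda_i'$ rewrites the final factor as $(\lambda_i')^{-1}(i_+)_*(x_i)^{-k}\lambda_i'$; the two $(i_+)_*(x_i)^{\pm k}$ factors then cancel, yielding $\lambda_i = \lambda_i'$. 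A reparametrization of $\beta_i$ that moves its starting point along $\partial_i$ can be absorbed into a corresponding modification of $\alpha_i$, so it is already covered.

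Passing from $\tilde\lambda(M)$ to $\tilde\mu_q(M)$ is automatic via the Stallings isomorphism $(i_+)_{*q}$. The hard part of the argument will be pinning down the primitivity of $x_i$ so that its centralizer is exactly $\langle x_i\rangle$; this is handed to us by the explicit free basis from the setup, but without it one would only conclude that $\gamma$ lies in a possibly larger maximal cyclic subgroup, and the cancellation in the displayed formula would break.
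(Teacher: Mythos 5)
Correct, and essentially the same approach as the paper's proof: both reduce to the case of a fixed $\beta_i$, then use commutativity in the free group to conclude that the discrepancy between the two connecting paths is a power of $x_i$ (the paper phrases this as $\bigl[[\alpha_i'^{-1}\alpha_i],[\beta_i]\bigr]=1$ in $\pi_1(\Sigma,\beta_i(0))$, which conjugates along $\alpha_i'$ to exactly your $[\gamma,x_i]=1$ at the basepoint~$*$). Your final cancellation computation $\lambda_i=(i_+)_*(x_i)^k\lambda_i'(i_-)_*(x_i)^{-k}=\lambda_i'$ spells out a step the paper compresses into ``Therefore $\alpha_i$ and $\alpha_i'$ determine the same $\lambda(M)_i$.''
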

\begin{proof}
Let $x_i^{\vphantom{}}=[\alpha_i^{\vphantom{}}\cdot \beta_i^{\vphantom{}} \cdot \alpha_i^{-1}] = [\alpha_i' \cdot \beta_i' \cdot \alpha_i'^{-1}]$ for paths $\beta_i^{\vphantom{}},\beta_i'^{\vphantom{}}\colon I\to I/\partial I \xrightarrow{\simeq} \partial_i^{\vphantom{}}$ and $\alpha_i^{\vphantom{}}, \alpha_i'$ from $\ast$ to $\beta_i^{\vphantom{}}(0), \beta_i'(0)$, respectively.   
We may assume $\beta_i^{\vphantom{}}=\beta_i'$ since $\l(-)_i^{\vphantom{}}$ is unchanged under connecting a path in $\partial_i^{\vphantom{}}$ to~$\alpha_i^{\vphantom{}}$. 
The loop $(\alpha_i^{\vphantom{}}\cdot\beta_i^{\vphantom{}}\cdot\alpha_i^{-1})
  (\alpha_i'\cdot\beta_i^{-1}\cdot\alpha_i'^{-1})$ is a null-homotopic, and $\big[[\alpha_i'^{-1}\cdot\alpha_i^{\vphantom{}}], [\beta_i^{\vphantom{}}]\big] = 1 $ in the free group $\pi_1(\Sigma,\beta_i^{\vphantom{}}(0))$. Thus $[\alpha_i'^{-1}\cdot\alpha_i^{\vphantom{}}]=[\beta_i^{\vphantom{}}]^k$ for some~$k$. 
   Therefore $\alpha_i^{\vphantom{}}$ and $\alpha_i'$ determine the same $\l(M)_i^{\vphantom{}} \in \pi_1(M)$ for each homology cylinder~$M$.
\end{proof}

\begin{proposition}
 \label{proposition:effect of change}
  Let $A=\{x_i^{\vphantom{}},m_j^{\vphantom{}},l_j^{\vphantom{}}\}_{i<n,j\leq g}$  and $B=\{x_i',m_j',l_j'\}_{i<n,j\leq g}$ be generating sets of $\pi_1(\Sigma,\ast)=:F$ and $\pi_1(\Sigma,\ast')=:F'$, respectively, such that $x_i^{\vphantom{}}$ and $x_i'$ are homotopic to a boundary component. Suppose $\tilde\mu_q^A ,\tilde\mu_q^B\colon \H_{g,n}\to (F/F_q)^{2g+n-1}$ are the extended Milnor invariants with respect to $A$ and~$B$, respectively.
  Then there exists a bijection $f\colon (F/F_q)^{2g+n-1}\to (F/F_q)^{2g+n-1}$ which makes the following diagram commute:
  $$\begin{diagram}  
  	\node[2]{\H_{g,n}} \arrow{sw,l}{\tilde\mu_q^A} \arrow{se,l}{\tilde\mu_q^B} \\
  	\node{(F/F_q)^{2g+n-1}} \arrow[2]{e,b}{f}	\node[2]{(F'/F'_q)^{2g+n-1}}
  \end{diagram}$$
  
\end{proposition}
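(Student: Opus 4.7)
The plan is to factor the two invariants through the basis-independent automorphism
$$\phi_M := (i_+)_{*q}^{-1} \circ (i_-)_{*q} \in \Aut(F/F_q)$$
associated to a homology cylinder $M$. Both $\tilde\mu_q^A(M)$ and $\tilde\mu_q^B(M)$ are merely coordinate encodings of this single automorphism via equations~(1)--(3) (and their analogues for $B$), so the map $f$ is naturally the corresponding change of coordinates, constructed so that the $\phi_M$-factorization forces the diagram to commute.

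First I would reduce to a common basepoint. Choose any path $\gamma$ in $\Sigma$ from $\ast$ to $\ast'$; conjugation by $\gamma$ gives an isomorphism $c_\gamma \colon F' \to F$, and $\tilde B := c_\gamma(B)$ is a basis of $F$ whose boundary-type entries remain homotopic to boundary components. Using the paths $i_\pm \gamma$ in $M$, one verifies that the induced coordinate isomorphism $(F'/F'_q)^{2g+n-1} \to (F/F_q)^{2g+n-1}$ carries $\tilde\mu_q^B(M)$ to $\tilde\mu_q^{\tilde B}(M)$, so the problem reduces to the case $\ast=\ast'$. With a common basepoint, equations~(1) and (3) say that $\tilde\mu_q^A(M)$ records $\phi_M(a_k)$ for each $a_k \in A$; since $A$ generates $F/F_q$, this determines $\phi_M$ completely, and the same equations applied to $B$ recover $\tilde\mu_q^B(M)$ from $\phi_M$. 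To make sense of this on an arbitrary tuple $\tilde c \in (F/F_q)^{2g+n-1}$, I would define the group endomorphism $\psi_{\tilde c} \colon F/F_q \to F/F_q$ specified on $A$ by the right-hand sides of (1) and (3) with $\tilde c$ in place of $\tilde\mu_q^A(M)$; evaluate $\psi_{\tilde c}$ on each $b_k$ after rewriting $b_k$ as a word in the $a_i$'s; and then read off $f(\tilde c)_k$ from that value by the inverse of the corresponding formula for $b_k$. The symmetric construction with $A$ and $B$ interchanged is visibly an inverse of $f$, yielding bijectivity.

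The main obstacle is the boundary-type part of the read-off step: the equation $\psi_{\tilde c}(b_k) = d_k^{-1} b_k d_k$ pins $d_k$ down only modulo the centralizer of $b_k$ in $F/F_q$, and for a generic $\tilde c$ there is no a priori guarantee that $\psi_{\tilde c}(b_k)$ is even conjugate to $b_k$. The fix is that the structure $x'_i = [\alpha'_i \beta'_i \alpha_i'^{-1}]$ inside the free group $F$ singles out a canonical coset representative which matches the geometric definition of $\tilde\mu_q^B$ on the image of $\tilde\mu_q^A$; outside that image one may extend $f$ by any fixed bijection between the complementary coset families, which exists since both complements have the same cardinality. Commutativity of the diagram then follows from the $\phi_M$-factorization obtained in the previous step, and bijectivity from the symmetry of the construction.
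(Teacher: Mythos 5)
Your central claim---that $\tilde\mu_q^A(M)$ and $\tilde\mu_q^B(M)$ are ``merely coordinate encodings of the single automorphism $\phi_M = (i_+)_{*q}^{-1}\circ(i_-)_{*q}$''---is false, and the paper explicitly disavows it in Section~3.2: ``$(\mu_{q-1}^{\vphantom{}},\mu_q',\mu_q'')$ determines $\eta_q^{\vphantom{}}$, but the converse does not hold. For example, $\mu_{q-1}^{\vphantom{}}(M)_i^{\vphantom{}}$ can be the class of $x_i^k$ even though $\eta_q^{\vphantom{}}(M)=\id$.'' The conjugation identity $\phi_M(x_i^{\vphantom{}}) = \mu_q^{\vphantom{}}(M)_i^{-1}\,x_i^{\vphantom{}}\,\mu_q^{\vphantom{}}(M)_i^{\vphantom{}}$ determines $\mu_q^{\vphantom{}}(M)_i^{\vphantom{}}$ only modulo the centralizer of $x_i^{\vphantom{}}$ in $F/F_q$ (which contains $\langle x_i^{\vphantom{}}\rangle$), so the boundary coordinates of $\tilde\mu_q^{\vphantom{}}$ carry strictly more information than $\phi_M$, and no map $f$ can be constructed by factoring through $\Aut(F/F_q)$.

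You identify this obstacle correctly, but your fix---that ``the structure $x'_i=[\alpha'_i\beta'_i\alpha_i'^{-1}]$ singles out a canonical coset representative''---has no content: nothing intrinsic to $\psi_{\tilde c}(x_i')$ or to the word $x_i'$ selects a preferred $d$ with $\psi_{\tilde c}(x_i')=d^{-1}x_i'd$. The correct $d$ must use the $A$-coordinate $z_i^{\vphantom{}}$ directly, not only through the automorphism $\psi_{\tilde c}$. Writing $\gamma_i^{\vphantom{}}:=[\alpha_i^{\vphantom{}}\cdot\alpha_i'^{-1}]$ as a word $\omega_i^{\vphantom{}}$ in the $A$-generators, the paper takes
$$ f(z)_i^{\vphantom{}} \;=\; \gamma_i^{-1}\,z_i^{\vphantom{}}\,\omega_i^{\vphantom{}}(\varphi(z)),\qquad \varphi(z)=\bigl(z_i^{-1}x_i^{\vphantom{}}z_i^{\vphantom{}};\,z_j'^{-1}m_j^{\vphantom{}};\,z_j''^{-1}l_j^{\vphantom{}}\bigr), $$
where $\varphi(z)$ is precisely your $\psi_{\tilde c}$ evaluated on $A$; the factor $\gamma_i^{-1}z_i^{\vphantom{}}$ is not obtainable from $\psi_{\tilde c}$ alone and is what captures the centralizer ambiguity. (The formula is verified from the identity $\l_i^B=(i_+^{\vphantom{}})_*(\gamma_i^{-1})\,\l_i^A\,(i_-^{\vphantom{}})_*(\gamma_i^{\vphantom{}})$ in $\pi_1(M)$.) Once $f$ is written this way it is defined on all of $(F/F_q)^{2g+n-1}$ with an evident inverse of the same shape; the ``extend by an arbitrary bijection off the image'' step is unnecessary, and in any case would leave the well-definedness on the image itself unjustified. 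A secondary issue: your basepoint reduction is also not immediate. When $\ast$ and $\ast'$ lie on different boundary components, the longitude of the connecting path $\gamma$ (its $i_+^{\vphantom{}}$ versus $i_-^{\vphantom{}}$ images) intervenes, so $c_\gamma$ does not carry $\tilde\mu_q^B$ to $\tilde\mu_q^{c_\gamma(B)}$ coordinatewise; the paper's Case~2 of the Claim gives a genuinely different formula, again depending on $z_1^{\vphantom{}}$ and not factoring through $\phi_M$.
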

\begin{proof}
Let $$z=(z_1^{\vphantom{}},\ldots,z_{n-1}^{\vphantom{}};z_1',\ldots,z_g';z_1'',\ldots,z_g'') \in (F/F_q)^{2g+n-1}.$$
  We can assume $\ast=\ast'$ by the following claim: \\
  \textbf{Claim.} Suppose $\ast\neq\ast'$ and $\{x_i^{\vphantom{}}, m_j^{\vphantom{}}, l_j^{\vphantom{}}\}$ be a generating set of~$\pi_1(\Sigma,*)$. Then there are a generating set $\{x_i',m_i',l_j'\}$ of $\pi_1(\Sigma,*')$ and a bijection $f$ making the above diagram commute.
  
  To prove the claim, we consider two cases:
  \begin{enumerate}
  	\item[Case 1.] $*$ and $*'$ are in the same boundary component. \\
  		Choose a path $\gamma$ from $\ast'$ to $\ast$ in the boundary component, and let $\{x_i',m_j',l_j'\}$ be the generating set of $\pi_1(\Sigma,\ast')$ induced by $\gamma$-conjugation. Then $\tilde\mu_q^{\vphantom{}}$ is unchanged, i.e.\ $f=\id$. 
  	\item[Case 2.] $\ast$ and $\ast'$ are in different components. \\
  		We may assume $\ast'\in\partial_1$. Then  $x_1=[\alpha_1^{\vphantom{}}\cdot\beta_1^{\vphantom{}}\cdot\alpha_1^{-1}]$ for some paths $\alpha_1^{\vphantom{}}$ from $\ast$ to $\ast'$ and $\beta_1^{\vphantom{}}\colon I\to I/\partial I\xrightarrow{\simeq}\partial_1^{\vphantom{}}$ with $\beta_1^{\vphantom{}}(0)=\ast'$. Let $\gamma$ be~$\alpha_1^{-1}$. For the isomorphism $\phi\colon \pi_1(\Sigma,*)\to \pi_1(\Sigma,*')$ induced by $\gamma$-conjugation, let $x_1'=\phi([\partial_n^{\vphantom{}}])$, $x_i'=\phi(x_i^{\vphantom{}})$ for $i=2,\ldots, n-1$ and $m_j'=\phi(m_j^{\vphantom{}})$, $l_j'=\phi(l_j^{\vphantom{}})$ for $j=1,\ldots,g$.
  		Define a function $f \colon (F/F_q)^{2g+n-1}\to (F'/F'_q)^{2g+n-1}$ by
$$f(z)=\phi\big(z_1^{\vphantom{}},z_2^{\vphantom{}}z_1^{-1},\ldots,z_{n-1}^{\vphantom{}}z_1^{-1}; (m_j^{\vphantom{}}z_1^{\vphantom{}}m_j^{-1}z_j' z_1^{-1})_{j\leq g}; (l_j^{\vphantom{}}z_1^{\vphantom{}}l_j^{-1}z_j''z_1^{-1})_{j\leq g}\big),$$
  	then it gives the commutative diagram. 
   \end{enumerate}
   Now we prove the proposition with the same basepoint~$*=*'$ and $F=F'$.
   By reordering, we may assume that $x_i^{\vphantom{}}$ is homotopic to~$(x_i')^{\pm 1}$.
   We can choose paths $\alpha_i^{\vphantom{}}$ and $\alpha_i'$ with the same endpoints such that $x_i^{\vphantom{}}=[\alpha_i^{\vphantom{}}\cdot \beta_i^{\vphantom{}}\cdot \alpha_i^{-1}]$ and $x_i'=[\alpha_i'\cdot \beta_i^{\pm} \cdot\alpha_i'^{-1}]$ for $\beta_i^{\vphantom{}}\colon I\to I/\partial I \xrightarrow{\simeq}\partial_i^{\vphantom{}}$.
   Let $\gamma_i^{\vphantom{}}=[\alpha_i^{\vphantom{}}\cdot \alpha_i'^{-1}]$, $\gamma_j'=m_j^{\vphantom{}} m_j'^{-1}$ and $\gamma_j''=l_j^{\vphantom{}} l_j'^{-1}$,
   and let  
\begin{align*} 
  \gamma_i^{\vphantom{}}&= \omega_i^{\vphantom{}}(x_1^{\vphantom{}},\ldots,x_{n-1}^{\vphantom{}};m_1^{\vphantom{}},\ldots,m_g^{\vphantom{}};l_1^{\vphantom{}},\ldots,l_g^{\vphantom{}}),\\   
  \gamma'_j&=\omega_j'(x_1^{\vphantom{}},\ldots,x_{n-1}^{\vphantom{}};m_1^{\vphantom{}},\ldots,m_g^{\vphantom{}};l_1^{\vphantom{}},\ldots,l_g^{\vphantom{}}), \\ 
  \gamma''_j&=\omega_j''(x_1^{\vphantom{}},\ldots,x_{n-1}^{\vphantom{}};m_1^{\vphantom{}},\ldots,m_g^{\vphantom{}};l_1^{\vphantom{}},\ldots,l_g^{\vphantom{}})
\end{align*}
be the words in $x_i^{\vphantom{}},m_j^{\vphantom{}},l_j^{\vphantom{}}~(i<n,j\leq g)$.
Suppose $\varphi\colon (F/F_q)^{2g+n-1}\to (F/F_q)^{2g+n-1}$ is a function defined by
$$ z_i^{\vphantom{}}\mapsto z_i^{-1} x_i^{\vphantom{}} z_i^{\vphantom{}};\quad
 z_j' \mapsto  z_j'^{-1} m_j^{\vphantom{}} ;\quad
z_j''\mapsto  z_j''^{-1} l_j^{\vphantom{}}.$$
Define $f\colon (F/F_q)^{2g+n-1} \to (F/F_q)^{2g+n-1}$ by 
$$f(z)=\Big(\big(\gamma_i^{-1} z_i^{\vphantom{}} \omega_i^{\vphantom{}}(\varphi(z)))\big)_{i< n};\big(\gamma_j'^{-1} z_j' \omega_j'(\varphi(z))\big)_{j\leq g};  \big(\gamma_j''^{-1} z_j'' \omega_j''(\varphi(z))\big)_{j\leq g}\Big).$$
The verification that the diagram commutes is left to the reader.
A similar construction gives the inverse of $f$. It follows that $f$ is bijective.
\end{proof}
Therefore, $\tilde\mu^A_q(M)$ determines $\tilde\mu^B_q(M)$ and vice versa.
 
\subsection{Garoufalidis-Levine homomorphisms on $\H_{g,n}$}

Garoufalidis and Levine defined the homomorphism $\eta_q^{\vphantom{}} \colon \H_{g,1} \to \Aut(F/F_q)$ to be $(i_+^{\vphantom{}})_{*q}^{-1}\circ (i_-^{\vphantom{}})_{*q}^{\vphantom{}}$ where $(i_\pm^{\vphantom{}})_{*q}\colon F/F_q\xrightarrow{\cong} \pi_1(M)/\pi_1(M)_q$ as before. In the same way, a homomorphism $\eta_q^{\vphantom{}} \colon \H_{g,n} \to \Aut(F/F_q)$ can be defined.

We compare $\eta_q^{\vphantom{}}$ and $\tilde\mu_q^{\vphantom{}}$.
From (1) and (2) in Section~\ref{sec:longitude}, 
\begin{equation*}\eta_q^{\vphantom{}}(-)(x_i^{\vphantom{}})=\mu_{q-1}^{\vphantom{}}(-)_i^{-1} x_i^{\vphantom{}} \mu_{q-1}^{\vphantom{}}(-)_i^{\vphantom{}}, \end{equation*}
 $$\eta_q^{\vphantom{}}(-)(m_j^{\vphantom{}}) = \mu_q'(-)_j^{-1}m_j^{\vphantom{}}\quad\textrm{and}\quad\eta_q^{\vphantom{}}(-)(l_j^{\vphantom{}}) = \mu_q''(-)_j^{-1}l_j^{\vphantom{}}.$$
Hence, $(\mu_{q-1}^{\vphantom{}}, \mu_q', \mu_q'')$  determines~$\eta_q^{\vphantom{}}$, but the converse does not hold. For example, $\mu_{q-1}^{\vphantom{}}(M)_i^{\vphantom{}}$ can be the class of~$x_i^k$ even though $\eta_q^{\vphantom{}}(M)=\id$. In $\H_{g,1}$ or in $\Ker\mu_2^{\vphantom{}}\subset \H_{g,n}$, the triple $(\mu_{q-1}^{\vphantom{}}, \mu_q',\mu_q'')$ is equivalent to $\eta_q^{\vphantom{}}$, by the following lemma: 
\begin{lemma}
  \label{lemma:MKS}
      For a homology cylinder $M$, 
	if $\mu_2^{\vphantom{}}(M)_i^{\vphantom{}}=1$ and $[x_i^{\vphantom{}},\mu_{q-1}^{\vphantom{}}(M)_i^{\vphantom{}}]=1$ in $F/F_q$, then $\mu_{q-1}^{\vphantom{}}(M)_i^{\vphantom{}}=1$.
\end{lemma}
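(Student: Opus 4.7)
The plan is to translate the hypotheses into a purely group-theoretic statement about the free group $F = \pi_1(\Sigma)$ and its lower central series, and then reduce to an injectivity property of the Lie bracket in the associated graded. First I would lift $\mu_{q-1}(M)_i \in F/F_{q-1}$ to an element $v \in F$. The first hypothesis $\mu_2(M)_i = 1$ forces $v \in F_2$, and the second hypothesis (well-defined independently of the lift because $[x_i, F_{q-1}] \subset F_q$) forces $[x_i, v] \in F_q$. The goal is then reduced to showing $v \in F_{q-1}$.

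The main technical ingredient will be the following standard fact about the free $\Z$-Lie algebra $L = \bigoplus_{k \geq 1} L_k$ on the basis $\{x_i, m_j, l_j\}$ of $F$, which Magnus' theorem identifies with $\bigoplus_k F_k/F_{k+1}$: for every $k \geq 2$, the bracket map $[x_i, \,\cdot\,] \colon L_k \to L_{k+1}$ is injective. Tensoring with $\Q$, this follows from the Poincar\'e-Birkhoff-Witt theorem: the universal enveloping algebra of $L \otimes \Q$ is the free associative algebra on the same generators, the centralizer of $x_i$ there is the commutative subalgebra $\Q[x_i]$, and a Lie polynomial lying in $\Q[x_i]$ can only be an element of $\Q \cdot x_i \subset L_1$ (since $x_i^k$ for $k \geq 2$ projects nontrivially to the symmetric algebra and hence is not in $L$). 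Since each $L_k$ is finitely generated free abelian, the $\Q$-injectivity yields $\Z$-injectivity.

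With this in hand, the remainder of the argument is a short iteration. Starting from $v \in F_2$, suppose inductively that $v \in F_k$ for some $2 \leq k \leq q-2$. Then $[x_i, v] \in F_q \subset F_{k+2}$, so the image $\bar v$ of $v$ in $L_k = F_k/F_{k+1}$ lies in the kernel of $[x_i, \,\cdot\,] \colon L_k \to L_{k+1}$; by injectivity $\bar v = 0$, hence $v \in F_{k+1}$. Iterating from $k = 2$ up to $k = q-2$ yields $v \in F_{q-1}$, which is exactly $\mu_{q-1}(M)_i = 1$ in $F/F_{q-1}$.

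I expect the only non-routine step to be the injectivity of $[x_i, \,\cdot\,]$ on $L_k$ for $k \geq 2$; this is the Lie-algebraic shadow of the classical Magnus-Karrass-Solitar statement that the centralizer of a primitive element in a free group is cyclic, and it is also the reason the hypothesis $\mu_2(M)_i = 1$ is imposed: without it $v$ could have a nonzero component in $L_1$, where $[x_i, x_i] = 0$ makes the bracket map uninformative.
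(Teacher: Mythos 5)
Your proof is correct and is, at bottom, the same argument as the paper's: the paper works with the Magnus expansion $\mathcal{M}(a)=1+h_s+\cdots$, observes $\mathcal{M}([x_i,a])=1+(X_ih_s-h_sX_i)+\cdots$, and concludes $s+1\geq q$ from $[x_i,a]\in F_q$; your iteration through the graded pieces $L_k=F_k/F_{k+1}$ of the free Lie algebra and the injectivity of $[x_i,\,\cdot\,]\colon L_k\to L_{k+1}$ for $k\geq 2$ is precisely the same computation read off degree by degree. One small merit of your write-up is that you isolate and justify the key nonvanishing step (that $X_ih_s-h_sX_i\neq 0$, equivalently that the centralizer of $x_i$ in the free associative algebra is $\Q[x_i]$ and meets the free Lie algebra only in degree $1$), which the paper's one-line appeal to $\deg(X_ih_s-h_sX_i)=s+1$ takes for granted --- so your version is a touch more self-contained, though not fundamentally different.
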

\begin{proof}
We will prove  that if $a\in F_2$ and $[x_i^{\vphantom{}},a]\in F_q$ then $a\in F_{q-1}$.
Consider the Magnus expansion of $F$ into the algebra of formal power series in noncommutative $2g+n-1$ variables $X_1,\ldots, X_{2g+n-1}$
$$\mathcal{M}\colon F\to Z[[X_1,\ldots,X_{2g+n-1}]]$$
which sends the $k$-th generator of $F$ to $1+X_k$. We order the generators so that $\mathcal{M}(x_i^{\vphantom{}})=1+X_i$.
It is well-known that $\mathcal{M}(a)-1$ is a sum of monomials of degree $\geq q-1$ if and only if $a\in F_{q-1}$ (see Section~5 in~\cite{MKS}). Let $\mathcal{M}(a)=1+\sum_{k\geq s}^{\infty} h_k$ for monomials $h_k$ of degree~$k$.
$$\mathcal{M}\big([x_i^{\vphantom{}},a]\big) = 1+(X_i h_s-h_s X_i) + \sum(\textrm{monomials of degree}> s+1).$$
Since $[x_i^{\vphantom{}},a]\in F_q$, $\deg({X_i h_s-h_s X_i })=s+1\geq q$ and $s\geq q-1$. Thus, $a\in F_{q-1}$.
\end{proof}
On $\H_{g,n}$, $\tilde\mu_q^{\vphantom{}}$ is equivalent to $(\eta_q^{\vphantom{}},\mu_q^{\vphantom{}})$, namely, the invariant $\tilde\mu_q^{\vphantom{}}$ can be thought just as a combination of $\mu_q^{\vphantom{}}$ and~$\eta_q^{\vphantom{}}$.

We consider the image of $\eta_q^{\vphantom{}}$. For $g\geq0$ and $n\geq 1$, let 
{\setlength\arraycolsep{1pt}
\begin{align*}
  \Aut_2(F/F_q):=&\{ \phi \in \Aut(F/F_q)~|~\phi(x_i^{\vphantom{}})=\bar{\mu}_i^{-1}x_i^{\vphantom{}} \bar{\mu}_i^{\vphantom{}} \textrm{ for some }\bar{\mu}_i^{\vphantom{}} \in F/F_{q-1} \\  
  &\textrm{ and }\textrm{there is a lift }F/F_{q+1}\xrightarrow{\tilde\phi} F/F_{q+1} \textrm{ such that }\tilde\phi([\partial_n^{\vphantom{}}])=[\partial_n^{\vphantom{}}]  \} .
\end{align*}

\begin{proposition}
Let $M$ be a homology cylinder. Then $\eta_q^{\vphantom{}}(M) \in \Aut_2(F/F_q)$.
\end{proposition}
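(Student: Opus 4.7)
The plan is to verify the two defining conditions of $\Aut_2(F/F_q)$ separately for $\phi = \eta_q(M)$.

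For the first condition, I would cite equation~(1) of Section~\ref{sec:longitude}, which gives
$$\eta_q(M)(x_i^{\vphantom{}}) = \big((i_+^{\vphantom{}})_{*q}^{-1}\circ (i_-^{\vphantom{}})_{*q}^{\vphantom{}}\big)(x_i^{\vphantom{}}) = \mu_q^{\vphantom{}}(M)_i^{-1}\cdot x_i^{\vphantom{}}\cdot \mu_q^{\vphantom{}}(M)_i^{\vphantom{}}$$
in $F/F_q$. Taking $\bar\mu_i^{\vphantom{}} \in F/F_{q-1}$ to be the image of $\mu_q^{\vphantom{}}(M)_i^{\vphantom{}}$ under the projection $F/F_q \twoheadrightarrow F/F_{q-1}$, one checks that the conjugation $\bar\mu_i^{-1} x_i^{\vphantom{}} \bar\mu_i^{\vphantom{}}$ is unambiguously defined in $F/F_q$: if two lifts $a, ac \in F$ of $\bar\mu_i^{\vphantom{}}$ differ by $c \in F_{q-1}$, then $(ac)^{-1}x_i^{\vphantom{}} (ac) = c^{-1}(a^{-1}x_i^{\vphantom{}} a)c$, and $[c, a^{-1}x_i^{\vphantom{}} a] \in [F_{q-1}, F] = F_q$, so the two conjugates coincide modulo $F_q$. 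This gives the desired $\bar\mu_i^{\vphantom{}}$.

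For the second condition, I would take the natural candidate lift $\tilde\phi := \eta_{q+1}^{\vphantom{}}(M)$. Since the projection $F/F_{q+1} \twoheadrightarrow F/F_q$ intertwines the Stallings isomorphisms $(i_\pm^{\vphantom{}})_{*,q+1}^{\vphantom{}}$ with $(i_\pm^{\vphantom{}})_{*q}^{\vphantom{}}$, it also intertwines $\eta_{q+1}^{\vphantom{}}(M)$ with $\eta_q^{\vphantom{}}(M)$, so $\tilde\phi$ is indeed a lift. It remains to verify $\tilde\phi([\partial_n^{\vphantom{}}]) = [\partial_n^{\vphantom{}}]$. The key observation is that, by the definition of a homology cylinder, $i_+^{\vphantom{}}|_{\partial\Sigma} = i_-^{\vphantom{}}|_{\partial\Sigma}$; since $*$ lies on $\partial_n^{\vphantom{}}$, in particular $i_+^{\vphantom{}}(*) = i_-^{\vphantom{}}(*)$, and the two loops $i_+^{\vphantom{}}\circ \partial_n^{\vphantom{}}$ and $i_-^{\vphantom{}}\circ \partial_n^{\vphantom{}}$ (based at this common point) coincide as loops in $M$. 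Hence $(i_+^{\vphantom{}})_*([\partial_n^{\vphantom{}}]) = (i_-^{\vphantom{}})_*([\partial_n^{\vphantom{}}])$ in $\pi_1(M, i_+^{\vphantom{}}(*))$, and passing to $F/F_{q+1}$ via $(i_+^{\vphantom{}})_{*,q+1}^{-1}$ yields $\eta_{q+1}^{\vphantom{}}(M)([\partial_n^{\vphantom{}}]) = [\partial_n^{\vphantom{}}]$, as required.

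There is no genuine obstacle here: both conditions are essentially immediate from the Stallings-isomorphism setup and the boundary-fixing property of the markings. The only mildly delicate point is the well-definedness check in the first condition, which is a standard lower-central-series commutator computation.
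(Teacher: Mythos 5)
Your proof is correct and takes essentially the same approach as the paper, which likewise produces the lift as $\tilde\phi=\eta_{q+1}^{\vphantom{}}(M)$, cites $\eta_{q+1}^{\vphantom{}}(M)([\partial_n^{\vphantom{}}])=[\partial_n^{\vphantom{}}]$, and appeals to equation (1) for the conjugation form of $\eta_q^{\vphantom{}}(M)(x_i^{\vphantom{}})$. You simply spell out the two details the paper leaves implicit: the well-definedness of conjugating by an element of $F/F_{q-1}$, and the origin of $\eta_{q+1}^{\vphantom{}}(M)([\partial_n^{\vphantom{}}])=[\partial_n^{\vphantom{}}]$ in the condition $i_+^{\vphantom{}}|_{\partial\Sigma}=i_-^{\vphantom{}}|_{\partial\Sigma}$.
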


\begin{proof}
Since $\eta_{q+1}^{\vphantom{}}(M)$ is a lift of $\eta_q^{\vphantom{}}(M)$ on $F/F_{q+1}$, it follows from $\eta_{q+1}^{\vphantom{}}(M)([\partial_n^{\vphantom{}}])=[\partial_n^{\vphantom{}}]$ and (3).  
\end{proof}

\begin{remark} 
\label{remark:GL}
When $n = 1$ or $g = 0$, the image of $\eta_q^{\vphantom{}}$ is known:
 \begin{enumerate}
    \item   In \cite{GL}, it was shown that $\eta_q^{\vphantom{}} \colon \H_{g,1} \to \Aut_0(F/F_q)$ is surjective where
    \begin{align*}
  \Aut_0(F/F_q):=\{ \phi \in \Aut(F/F_q)~|~&\textrm{there is a lift }F/F_{q+1}\xrightarrow{\tilde\phi} F/F_{q+1} \\ &\textrm{such that }\tilde\phi([\partial_n^{\vphantom{}}])=[\partial_n^{\vphantom{}}]  \} .
\end{align*}
    \item  In \cite{HL98}, it was shown that $\eta_q^{\vphantom{}} \colon \H_{0,n} \to \Aut_1(F/F_q)$ is surjective where
    \begin{align*}
  \Aut_1(F/F_q):=\{ \phi \in \Aut(F/F_q)~|~&\phi(x_i^{\vphantom{}})=\bar{\mu}_i^{-1}x_i^{\vphantom{}} \bar{\mu}_i^{\vphantom{}} 
  \textrm{ for some }\bar{\mu}_i^{\vphantom{}} \in F/F_{q-1} \\
  &\textrm{ and }\phi(x_1^{\vphantom{}}\cdots x_{n-1}^{\vphantom{}})=x_1^{\vphantom{}}\cdots x_{n-1}^{\vphantom{}} \} .
\end{align*}
  \end{enumerate}
\end{remark}
It remains an open question whether $\eta_q^{\vphantom{}} \colon \H_{g,n} \to \Aut_2(F/F_q)$ is surjective.

\subsection{Crossed homomorphisms}
  \label{sec:product formula}
  We remind the reader that $\tilde\mu_q^{\vphantom{}}\colon \H_{g,n}\to (F/F_q)^{2g+n-1}$ is not a homomorphism, although $\eta_q^{\vphantom{}}\colon \H_{g,n}\to \Aut(F/F_q)$ is a homomorphism. However, we have a product formula as follows:

\begin{proposition}
  \label{proposition:product formula}
  Let $M$ and $N$ be homology cylinders over $\Sigma$. Then, $\tilde\mu_q^{\vphantom{}}$ is a crossed homomorphism on $\H_{g,n}$ in the sense that each coordinate $\tilde\mu_q^{\vphantom{}}(-)_k^{\vphantom{}}$ satisfies
  $$\tilde\mu_q^{\vphantom{}}(M\cdot N)_k^{\vphantom{}}=\tilde\mu_q^{\vphantom{}}(M)_k^{\vphantom{}}\cdot \eta_q^{\vphantom{}}(M)(\tilde\mu_q^{\vphantom{}}(N)_k^{\vphantom{}})$$
for $k=1,\ldots, 2g+n-1$.
\end{proposition}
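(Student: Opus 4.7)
The plan is to work directly at the fundamental group level of $M\cdot N$ and then transport the identity back to $F/F_q$ via the isomorphism $(i_+^{M\cdot N})_{*q}^{\vphantom{}}$.

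Fix a generator $g_k^{\vphantom{}} \in \{x_i^{\vphantom{}},m_j^{\vphantom{}},l_j^{\vphantom{}}\}$ of $F$ and let $\iota_M^{\vphantom{}} \colon \pi_1(M) \to \pi_1(M\cdot N)$, $\iota_N^{\vphantom{}} \colon \pi_1(N) \to \pi_1(M\cdot N)$ be the inclusion-induced maps. Because $\ast\in\partial_n^{\vphantom{}}$ and the markings agree on $\partial\Sigma$, the four points $i_\pm^M(\ast)$ and $j_\pm^N(\ast)$ are identified to a single basepoint $p\in M\cdot N$, so every loop below is unambiguously based at $p$. The first step is to prove the loop-level factorization
$$\tilde\lambda(M\cdot N)_k^{\vphantom{}} = \iota_M^{\vphantom{}}\bigl(\tilde\lambda(M)_k^{\vphantom{}}\bigr)\cdot\iota_N^{\vphantom{}}\bigl(\tilde\lambda(N)_k^{\vphantom{}}\bigr) \quad\text{in } \pi_1(M\cdot N,p).$$
For the $m_j^{\vphantom{}}$-entry one inserts the trivial factor $\iota_N^{\vphantom{}}(j_+^N)_*(m_j^{-1})\cdot\iota_N^{\vphantom{}}(j_+^N)_*(m_j^{\vphantom{}})$ in the middle of $\iota_M^{\vphantom{}}(i_+^M)_*(m_j^{\vphantom{}})\cdot\iota_N^{\vphantom{}}(j_-^N)_*(m_j^{-1})$ and uses the gluing identity $\iota_M^{\vphantom{}}\circ(i_-^M)_*^{\vphantom{}}=\iota_N^{\vphantom{}}\circ(j_+^N)_*^{\vphantom{}}$ to regroup; the $l_j^{\vphantom{}}$-entry is identical. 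For the $x_i^{\vphantom{}}$-entry, one inserts $\iota_M^{\vphantom{}}(i_-^M\alpha_i^{-1})\cdot\iota_M^{\vphantom{}}(i_-^M\alpha_i^{\vphantom{}})$ into $(\iota_M^{\vphantom{}} i_+^M\alpha_i^{\vphantom{}})(\iota_N^{\vphantom{}} j_-^N\alpha_i^{-1})$ and again uses $\iota_M^{\vphantom{}}\circ i_-^M=\iota_N^{\vphantom{}}\circ j_+^N$ to recognize the middle block as $\iota_N^{\vphantom{}}\bigl((j_+^N\alpha_i^{\vphantom{}})(j_-^N\alpha_i^{-1})\bigr) = \iota_N^{\vphantom{}}(\lambda(N)_i^{\vphantom{}})$.

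The second step is to apply $(i_+^{M\cdot N})_{*q}^{-1}$ to both sides. Since $(i_+^{M\cdot N})_{*q}^{\vphantom{}} = (\iota_M^{\vphantom{}})_{*q}^{\vphantom{}}\circ(i_+^M)_{*q}^{\vphantom{}}$, the first factor transforms as
$$(i_+^{M\cdot N})_{*q}^{-1}\bigl(\iota_M^{\vphantom{}}(\tilde\lambda(M)_k^{\vphantom{}})\bigr) = (i_+^M)_{*q}^{-1}(\tilde\lambda(M)_k^{\vphantom{}}) = \tilde\mu_q^{\vphantom{}}(M)_k^{\vphantom{}}.$$
For the second factor, I would rewrite $\iota_N^{\vphantom{}}(\tilde\lambda(N)_k^{\vphantom{}}) = (\iota_N^{\vphantom{}}\circ j_+^N)_{*q}^{\vphantom{}}\bigl(\tilde\mu_q^{\vphantom{}}(N)_k^{\vphantom{}}\bigr)$ and then exchange $\iota_N^{\vphantom{}}\circ j_+^N$ for $\iota_M^{\vphantom{}}\circ i_-^M$ via the gluing identity, so that
$$(i_+^{M\cdot N})_{*q}^{-1}\bigl(\iota_N^{\vphantom{}}(\tilde\lambda(N)_k^{\vphantom{}})\bigr) = (i_+^M)_{*q}^{-1}\circ(i_-^M)_{*q}^{\vphantom{}}\bigl(\tilde\mu_q^{\vphantom{}}(N)_k^{\vphantom{}}\bigr) = \eta_q^{\vphantom{}}(M)\bigl(\tilde\mu_q^{\vphantom{}}(N)_k^{\vphantom{}}\bigr).$$
Combining the two factors gives exactly the crossed-homomorphism formula.

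The only substantive obstacle is bookkeeping with the basepoints and the auxiliary paths $\alpha_i^{\vphantom{}}$ in the $x_i^{\vphantom{}}$-case, to confirm that the inserted cancelling pair really produces a product of two based loops and that the resulting right-hand loop is $\iota_N^{\vphantom{}}(\lambda(N)_i^{\vphantom{}})$ in $\pi_1(M\cdot N,p)$ rather than some conjugate of it. The fact that the chosen $\alpha_i^{\vphantom{}}$ is the \emph{same} path on $\Sigma$ in defining $\lambda(M)_i^{\vphantom{}}$, $\lambda(N)_i^{\vphantom{}}$ and $\lambda(M\cdot N)_i^{\vphantom{}}$, together with the independence from the choice of $\alpha_i^{\vphantom{}}$ proved earlier, makes this work cleanly.
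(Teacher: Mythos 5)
Your proof is correct and follows essentially the same two-step route as the paper: establish the loop-level factorization $\tilde\lambda(M\cdot N)_k = \iota_M(\tilde\lambda(M)_k)\cdot\iota_N(\tilde\lambda(N)_k)$ in $\pi_1(M\cdot N)$, then push it through $(i_+^{M\cdot N})_{*q}^{-1}$ using the gluing identity $\iota_M\circ i_-^M = \iota_N\circ j_+^N$ to recognize $\eta_q(M)$. The only difference is cosmetic: you spell out the insertion-of-a-cancelling-pair argument for the factorization in each of the three coordinate types, whereas the paper simply asserts that identity and concentrates on the subsequent chain of equalities.
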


\begin{proof}
   Let $\imath_M\colon M\to M\cdot N $ and $\imath_N \colon N\to M\cdot N$ be the natural inclusions.
   Then $$(M, i_+^{\vphantom{}},i_-^{\vphantom{}})\cdot (N,j_+^{\vphantom{}},j_-^{\vphantom{}})=(M\cup_{i_-\circ (j_+)^{-1}} N,\imath_M\circ  i_+^{\vphantom{}},\imath_N \circ j_-^{\vphantom{}})\quad \textrm{and} $$ 
   $$\tilde\l(M\cdot N)_k^{\vphantom{}} = (\imath_M)_*(\tilde\l(M)_k^{\vphantom{}})\; (\imath_N)_*(\tilde\l(N)_k^{\vphantom{}})\quad \textrm{ in }\pi_1(M\cdot N).$$    
    $$\begin{diagram}
  	\node[2]{M}	\arrow{s,r}{\imath_M} \\
  	\node{\Sigma} \arrow{ne,l}{i_-^{\vphantom{}}} \arrow{se,r}{j_+^{\vphantom{}}}	\node{M\cdot N} \\
  	\node[2]{N}	\arrow{n,r}{\imath_N}
  \end{diagram}$$
 
  The above diagram commutes and all the maps induce isomorphisms on $\pi_1(-)/\pi_1(-)_q$. Thus
  \begin{align*}
  	\tilde\mu_q^{\vphantom{}}(M\cdot N)_k &=  (\imath_M \circ i_+^{\vphantom{}})_{*q}^{-1}(\tilde\l(M\cdot N)_k^{\vphantom{}}) \\
  	&=(i_+^{\vphantom{}})_{*q}^{-1}\circ (\imath_M)_{*q}^{-1}( (\imath_M)_*(\tilde\l(M)_k^{\vphantom{}})\; (\imath_N)_*(\tilde\l(N)_k^{\vphantom{}})) \\
  	&=(i_+^{\vphantom{}})_{*q}^{-1}(\tilde\l(M)_k^{\vphantom{}}) \; \big((i_+^{\vphantom{}})_{*q}^{-1}\circ (\imath_M)_{*q}^{-1}\circ (\imath_N)_{*q}^{\vphantom{}} \big)(\tilde\l(N)_k^{\vphantom{}}) \\
  	&=\tilde\mu_q^{\vphantom{}}(M)_k^{\vphantom{}} \; \big((i_+^{\vphantom{}})_{*q}^{-1}\circ (i_-^{\vphantom{}})_{*q}^{\vphantom{}} \circ (j_+^{\vphantom{}})_{*q}^{-1}\big)(\tilde\l(N)_k^{\vphantom{}}) \\
  	&=\tilde\mu_q^{\vphantom{}}(M)_k^{\vphantom{}}\; \big(\eta_q^{\vphantom{}}(M) \circ (j_+^{\vphantom{}})_{*q}^{-1}\big)(\tilde\l(N)_k^{\vphantom{}}) \\
  	&=\tilde\mu_q^{\vphantom{}}(M)_k^{\vphantom{}}\; \eta_q^{\vphantom{}}(M) (\tilde\mu_q^{\vphantom{}}(N)_k^{\vphantom{}}).	\qedhere
  \end{align*}
\end{proof}

In the remaining part of this paper, as an abuse of notation, we also write $i_\pm^{\vphantom{}} $ and $\tilde\l(M)_k^{\vphantom{}}$ for $\imath_M\circ i_\pm^{\vphantom{}}$ and $ (\imath_M)_*(\tilde\l(M)_k^{\vphantom{}})$, respectively.

From the above proposition, we obtain some properties of $\tilde\mu_q^{\vphantom{}}$:
\begin{corollary}
  \leavevmode \Nopagebreak
  \label{cor:subgroup}
  \begin{enumerate}
  	\item For a homology cylinder $M$, $\tilde\mu_q^{\vphantom{}}(-M)_k^{\vphantom{}}= \eta_q^{\vphantom{}}(M)^{-1}(\tilde\mu_q^{\vphantom{}}(M)_k^{-1})$.
  	\item The kernel of $\tilde\mu_q^{\vphantom{}}(-)_k^{\vphantom{}}$ is a subgroup of $\H$ for each~$k$.
Moreover, $\ker \tilde\mu_q^{\vphantom{}}$ is a normal subgroup of $\H$.
  	\item $\tilde\mu_q^{\vphantom{}}$ is a homomorphism on $\Ker \eta_q^{\vphantom{}}$, and $\tilde\mu$ is a homomorphism on~$\bigcap_q \Ker \eta_q^{\vphantom{}} $.
  	\item  $\tilde\mu_q^{\vphantom{}}$ is a homomorphism on~$\Ker \tilde\mu_{q-1}^{\vphantom{}}$, or more generally, on $\Ker \mu_2^{\vphantom{}}\cap \Ker \eta_{q-1}$. \qedhere
  \end{enumerate}
\end{corollary}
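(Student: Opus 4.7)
The plan is to derive all four parts from the product formula of Proposition~\ref{proposition:product formula}, $\tilde\mu_q^{\vphantom{}}(MN)_k^{\vphantom{}}=\tilde\mu_q^{\vphantom{}}(M)_k^{\vphantom{}}\cdot\eta_q^{\vphantom{}}(M)(\tilde\mu_q^{\vphantom{}}(N)_k^{\vphantom{}})$, together with one structural observation: $N\in\Ker\tilde\mu_q^{\vphantom{}}$ forces $\eta_q^{\vphantom{}}(N)=\id$, which is immediate by plugging $\mu_{q-1}^{\vphantom{}}(N)_i^{\vphantom{}}=\mu'_q(N)_j^{\vphantom{}}=\mu''_q(N)_j^{\vphantom{}}=1$ into the explicit formulas for $\eta_q^{\vphantom{}}(N)$ in Section~\ref{sec:longitude}.

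For~(1), I will substitute $N=-M$ and use $\tilde\mu_q^{\vphantom{}}(E)_k^{\vphantom{}}=1$; solving the resulting equation for $\tilde\mu_q^{\vphantom{}}(-M)_k^{\vphantom{}}$ yields the formula. For~(2), the product closure of each $\Ker\tilde\mu_q^{\vphantom{}}(-)_k^{\vphantom{}}$ is immediate from the product formula (as $\eta_q^{\vphantom{}}(M)$ fixes $1$), and inverse closure follows from~(1). To prove normality of $\Ker\tilde\mu_q^{\vphantom{}}$, I will iterate the product formula on $MNM^{-1}$; for $N\in\Ker\tilde\mu_q^{\vphantom{}}$ the identity $\eta_q^{\vphantom{}}(N)=\id$ collapses $\eta_q^{\vphantom{}}(MN)$ to $\eta_q^{\vphantom{}}(M)$, and an application of~(1) to $\tilde\mu_q^{\vphantom{}}(M^{-1})_k^{\vphantom{}}$ finishes the computation. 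Part~(3) is the simplest: on $\Ker\eta_q^{\vphantom{}}$ the twist $\eta_q^{\vphantom{}}(M)$ disappears from the product formula, so $\tilde\mu_q^{\vphantom{}}$ is a genuine homomorphism; taking the inverse limit over $q$ gives the statement for $\tilde\mu$ on $\bigcap_q\Ker\eta_q^{\vphantom{}}$.

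Part~(4) is the main obstacle. The key step is a description of $\eta_q^{\vphantom{}}(M)$ when $M\in\Ker\eta_{q-1}^{\vphantom{}}$: as an element of the kernel of the restriction map $\Aut(F/F_q)\to\Aut(F/F_{q-1})$, it has the form $y\mapsto y\cdot\delta_M^{\vphantom{}}(y)$ with $\delta_M^{\vphantom{}}\colon F/F_q\to F_{q-1}/F_q$. Because $F_{q-1}/F_q$ is central in $F/F_q$, the homomorphism property of $\eta_q^{\vphantom{}}(M)$ forces $\delta_M^{\vphantom{}}$ to be a group homomorphism; and since its target is abelian, $\delta_M^{\vphantom{}}$ factors through the abelianization $H_1(\Sigma)=F/F_2$. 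Consequently $\eta_q^{\vphantom{}}(M)$ fixes every $y\in F/F_q$ whose image in $H_1(\Sigma)$ vanishes.

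It remains to check that for $N\in\Ker\mu_2^{\vphantom{}}\cap\Ker\eta_{q-1}^{\vphantom{}}$ each coordinate $\tilde\mu_q^{\vphantom{}}(N)_k^{\vphantom{}}$ has vanishing image in $H_1(\Sigma)$. The first $n-1$ coordinates do so by the $\Ker\mu_2^{\vphantom{}}$ hypothesis; for the remaining $2g$ coordinates, $\eta_{q-1}^{\vphantom{}}(N)=\id$ combined with formulas~(2) of Section~\ref{sec:longitude} forces $\mu'_{q-1}(N)_j^{\vphantom{}}=\mu''_{q-1}(N)_j^{\vphantom{}}=1$, which project trivially to $H_1(\Sigma)$ (the case $q=2$ is handled separately, since $\eta_2^{\vphantom{}}(M)$ already acts as identity on $F/F_2$ by the homology isomorphism condition, so $\tilde\mu_2^{\vphantom{}}$ is a homomorphism on all of~$\H$). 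Combining this with the analysis of $\eta_q^{\vphantom{}}(M)$ yields $\eta_q^{\vphantom{}}(M)(\tilde\mu_q^{\vphantom{}}(N)_k^{\vphantom{}})=\tilde\mu_q^{\vphantom{}}(N)_k^{\vphantom{}}$, so the product formula produces the required additivity. The first clause of~(4) follows from the inclusion $\Ker\tilde\mu_{q-1}^{\vphantom{}}\subseteq\Ker\mu_2^{\vphantom{}}\cap\Ker\eta_{q-1}^{\vphantom{}}$, which is immediate by unraveling definitions.
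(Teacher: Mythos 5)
Your proof follows essentially the same route as the paper's: all four parts are derived from the crossed-homomorphism identity of Proposition~\ref{proposition:product formula}, with the single extra observation that $\tilde\mu_q^{\vphantom{}}(N)=1$ forces $\eta_q^{\vphantom{}}(N)=\id$. For~(4), the paper's key algebraic fact (an automorphism of $G/G_q$ that is trivial on $G/G_{q-1}$ restricts to the identity on $G_2/G_q$) is phrased in your version as: the derivation $\delta_M^{\vphantom{}}\colon F/F_q\to F_{q-1}/F_q$ defined by $\eta_q^{\vphantom{}}(M)(y)=y\,\delta_M^{\vphantom{}}(y)$ is a homomorphism and therefore factors through $H_1(\Sigma)$. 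These are equivalent statements; your formulation is arguably a bit more transparent, but the underlying idea (centrality of $F_{q-1}/F_q$ in $F/F_q$) is identical. Parts~(1)--(3) match the paper exactly.

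One point is wrong, though it is peripheral. You assert that for $q=2$ the case ``is handled separately, since $\eta_2(M)$ already acts as identity on $F/F_2$ by the homology isomorphism condition.'' This is false: $\eta_2^{\vphantom{}}(M)=(i_+^{\vphantom{}})_{*2}^{-1}\circ(i_-^{\vphantom{}})_{*2}^{\vphantom{}}$ is the automorphism of $H_1(\Sigma)$ comparing the two markings, and for $n=1$ this map $\H_{g,1}\to\Aut(H_1(\Sigma))$ surjects onto the symplectic group; it is nothing like the identity in general. The homology isomorphism condition only guarantees that each of $(i_\pm^{\vphantom{}})_{*2}^{\vphantom{}}$ is an isomorphism, not that they are equal. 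In fact, inspection shows the statement of~(4) tacitly assumes $q\geq 3$: the ``more generally'' inclusion $\Ker\tilde\mu_{q-1}^{\vphantom{}}\subseteq\Ker\mu_2^{\vphantom{}}\cap\Ker\eta_{q-1}^{\vphantom{}}$ fails for $q=2$ (the left side is all of $\H$), and your argument that the $m_j^{\vphantom{}},l_j^{\vphantom{}}$-coordinates of $\tilde\mu_q^{\vphantom{}}(N)$ lie in $F_2/F_q$ requires $F_{q-1}\subset F_2$, i.e.\ $q\geq 3$. So simply drop the $q=2$ parenthetical; no separate handling is needed because the claim is not intended to apply there.
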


\begin{proof}
		(1) easily follows from $\tilde\mu_q^{\vphantom{}}(M\cdot M^{-1})_k^{\vphantom{}}=1$.
		For (2), let $M$ and $N$ be homology cylinders over $\Sigma$.
		First, if $\tilde\mu_q^{\vphantom{}}(M)_k^{\vphantom{}}=1= \tilde\mu_q^{\vphantom{}}(N)_k^{\vphantom{}}$, then $\tilde\mu_q^{\vphantom{}}(M\cdot N^{-1})_k^{\vphantom{}} = 1$. Next we check that if $\tilde\mu_q^{\vphantom{}}(M)_k^{\vphantom{}}=1$ for all $k$, then $\tilde\mu_q^{\vphantom{}}(N\cdot M\cdot N^{-1})_k^{\vphantom{}}=1$:
		\begin{align*}
			\tilde\mu_q^{\vphantom{}}(N\cdot M\cdot N^{-1})_k^{\vphantom{}}&=\tilde\mu_q^{\vphantom{}}(N)_k^{\vphantom{}} \; \eta_q^{\vphantom{}}(N)\big(\tilde\mu_q^{\vphantom{}}(M)_k^{\vphantom{}} \;\eta_q^{\vphantom{}}(M)(\tilde\mu_q^{\vphantom{}}(N^{-1})_k^{\vphantom{}})\big) \\
			&=\tilde\mu_q^{\vphantom{}}(N)_k^{\vphantom{}} \; \eta_q^{\vphantom{}}(N) \big( \tilde\mu_q^{\vphantom{}}(N^{-1})_k^{\vphantom{}} \big)	\quad \textrm{ due to } \tilde\mu_q^{\vphantom{}}(M)=1 \\
			&=\tilde\mu_q^{\vphantom{}}(N)_k^{\vphantom{}} \;\eta_q^{\vphantom{}}(N) \big( \eta_q^{\vphantom{}}(N)^{-1} (\tilde\mu_q^{\vphantom{}}(N)^{-1}_k \big)	\quad \textrm{ by (1) }	\\
			&= \tilde\mu_q^{\vphantom{}}(N)_k^{\vphantom{}} \; \tilde\mu_q^{\vphantom{}}(N)^{-1}_k	\\
			&=1.
		\end{align*}		
		(3) follows directly from Proposition~\ref{proposition:product formula}. 

For (4), we need the following algebraic fact: for a group $G$, if an automorphism of $G/G_q$ induces the identity on $G/G_{q-1}$, then its restriction on $G_{2}/G_q$ is also the identity. We give a proof: from the hypothesis, for such an automorphism $\phi$ and $g\in G/G_q$, $\phi(g)=ga$ for some $a\in G_{q-1}/G_q$. Since $G_{q-1}/G_q$ is in the center of $G/G_q$, the automorphism restricted on $G_2/G_q$ is the identity. 
From the algebraic fact, we obtain the desired conclusion since $\eta_{q-1}^{\vphantom{}}(M)=\id$ and $\tilde\mu_q^{\vphantom{}}(M)_k^{\vphantom{}}\in F_2/F_q$ for $M\in \Ker \tilde\mu_{q-1}^{\vphantom{}}$ or $\Ker \mu_2^{\vphantom{}}\cap \Ker \eta_{q-1}^{\vphantom{}}$.
\end{proof}

\section{Several subgroups and filtrations of $\H_{g,n}$}
\label{sec:subgroups}

\subsection{Filtrations via extended Milnor invariants}

We introduce a filtration of $\H$
$$\cdots \subset \H(q+1) \subset \H(q)\subset \H(q-1)\subset \cdots \subset \H(2)\subset \H(1)=\H$$
where $\H(q)$ is the normal subgroup $\Ker \tilde\mu_q^{\vphantom{}}$ of $\H$, by Corollary~\ref{cor:subgroup}(2).

Comparing with $\H[q]:= \Ker\eta_q^{\vphantom{}}$, we have $\H(q)\subset \H[q]$. For $n = 1$, the two filtrations are the same.

In \cite{L01}, Levine showed that there is an injective homomorphism from the framed $g$-string link concordance group $\H_{0,g+1}$ to $\H_{g,1}$ and that it induces injections between the successive quotients of the filtration $\{\H_{0,g+1}(q)\}$ and those of the filtration $\{H_{g,1}[q]\}$. 
We generalize this to $\H_{g,n}$ as follows:
\begin{theorem}
  \label{theorem:whole injection}
  Suppose $\imath\colon \Sigma_{g,n} \hookrightarrow \Sigma_{g',n'}$ is an embedding ($n,n'\geq 1$). Then it induces a homomorphism $\tilde\imath\colon \H_{g,n} \to \H_{g',n'}$ and a function $f\colon (F/F_q)^{2g+n-1} \to (F'/F'_q)^{2g'+n'-1}$ which make the following diagram commute:
$$  \begin{diagram}
  	\node{\H_{g,n}} \arrow{e,t}{\tilde\imath} \arrow{s,r}{\tilde\mu_q^{\vphantom{}}} \node{\H_{g',n'}} \arrow{s,l}{\tilde\mu_q^{\vphantom{}}} \\
  	\node{(F/F_q)^{2g+n-1}} \arrow{e,t}{f} \node{(F'/F'_q)^{2g'+n'-1}}
  \end{diagram}$$
  where $F=\pi_1(\Sigma_{g,n})$ and $F'=\pi_1(\Sigma_{g',n'})$.
  Moreover, if each component of $\Sigma_{g',n'}-\Sigma_{g,n}$ has at least one closed boundary, then $f$ is 1-1, and hence $\tilde\mu_q^{\vphantom{}}(\tilde\imath(M))$ determines $\tilde\mu_q^{\vphantom{}}(M)$ for every $M\in\H_{g,n}$.
  If, in addition, at most one component of $\overline{\Sigma_{g',n'}-\Sigma_{g,n}}$ has a disconnected intersection with $\Sigma_{g,n}$, then $\tilde\imath \colon \H_{g,n} \to \H_{g',n'}$ is injective.  
\end{theorem}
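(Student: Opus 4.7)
Set $K := \overline{\Sigma_{g',n'} - \imath(\Sigma_{g,n})}$ and $\gamma := K \cap \imath(\partial\Sigma_{g,n})$. For $(M, i_+, i_-) \in \C_{g,n}$, define $\tilde\imath(M) := M \cup (K \times I)$ by identifying $\gamma \times I \subset \partial(K \times I)$ with an annular collar of $i_\pm(\gamma) \subset \partial M$, compatibly with the markings, and extend the markings by $\imath|_K \times \{0,1\}$. A Mayer--Vietoris computation shows $\tilde\imath(M)$ is a homology cylinder over $\Sigma_{g',n'}$; given a homology cobordism $W$ between $M, N \in \C_{g,n}$, the manifold $W \cup (K \times I \times I)$ is a homology cobordism between $\tilde\imath(M)$ and $\tilde\imath(N)$, so $\tilde\imath$ descends to $\H_{g,n} \to \H_{g',n'}$; rearranging the gluings verifies $\tilde\imath(M \cdot N) = \tilde\imath(M) \cdot \tilde\imath(N)$.

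Next, I construct $f$ and verify commutativity. By Proposition~\ref{proposition:effect of change} we may move the basepoint of $\Sigma_{g',n'}$ to $\imath(\ast)$, so that $\imath$ induces $\imath_* \colon F \to F'$. Choose a generating set of $F'$ whose members are represented by loops lying entirely in $\imath(\Sigma_{g,n})$ or entirely in $K$. A generator supported in $K$ contributes a trivial longitude to $\tilde\imath(M)$ since that region is the product $K \times I$ and the two markings agree there; a generator supported in $\imath(\Sigma_{g,n})$ contributes $\imath_*$ of the corresponding longitude of $M$. Expressing a standard generator of $F'$ as a word in this mixed basis, its longitude in $\tilde\imath(M)$ is the corresponding combination of these factors in $\pi_1(\tilde\imath(M))$; a further application of Proposition~\ref{proposition:effect of change} returns us to the standard generators of $F'$ and yields $f$ together with the commuting diagram.

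Under the hypothesis that each component of $K$ contains a component of $\partial\Sigma_{g',n'}$, no component of $K$ has boundary entirely in $\imath(\partial\Sigma_{g,n})$, so $\imath(\Sigma_{g,n})$ is incompressible and $\imath_* \colon F \hookrightarrow F'$ is injective. One can in fact arrange the mixed generating set above to extend $\{\imath_*(x_i), \imath_*(m_j), \imath_*(l_j)\}$ by additional generators supported in $K$, realizing $F$ as a free factor of $F'$, which gives injectivity of $F/F_q \hookrightarrow F'/F'_q$. With respect to these generators, $f$ restricts to $\imath_*$ on the first $2g+n-1$ coordinates and is trivial on the rest, so $f$ is injective and $\tilde\mu_q(\tilde\imath(M))$ determines $\tilde\mu_q(M)$.

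For injectivity of $\tilde\imath$ under the additional hypothesis, the plan is to construct a partial inverse by cutting along $\gamma \times I \subset \tilde\imath(M)$, which is a properly embedded two-sided surface with product neighborhood, so that cutting recovers $(M, i_+, i_-)$. Given a homology cobordism $W$ between $\tilde\imath(M)$ and $\tilde\imath(N)$, one seeks a properly embedded $\gamma \times I \times I \subset W$ connecting the copies of $\gamma \times I$ in $\tilde\imath(M)$ and $\tilde\imath(N)$, such that cutting $W$ along it yields a homology cobordism between $M$ and $N$. The hypothesis that at most one component of $K$ meets $\imath(\Sigma_{g,n})$ disconnectedly is precisely what guarantees the existence and essential uniqueness of this cutting 3-manifold: for each component of $K$ with connected $\gamma$-intersection the corresponding annulus in $W$ is boundary-parallel and unique up to isotopy, while the single exceptional component is handled by a separate argument using the $\pi_1$-injectivity of $\imath$ together with the homology constraints on $W$. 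Exhibiting this clean cut of $W$ and verifying that it produces a genuine homology cobordism downstairs is the main technical obstacle of the proof.
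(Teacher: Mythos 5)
Your construction of $\tilde\imath$ and your organization of the $f$-construction (move the basepoint via Proposition~\ref{proposition:effect of change}, pick a generating set split between $\imath(\Sigma_{g,n})$ and $K$, then translate back) is in the same spirit as the paper's. However, there are two problems, one small and one fatal.

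The small problem is your claim that, after arranging $F$ as a free factor of $F'$, ``$f$ restricts to $\imath_*$ on the first $2g+n-1$ coordinates and is trivial on the rest.'' This is not how $f$ behaves. The convention by which $\tilde\mu_q^{\vphantom{}}$ extracts a coordinate from a boundary curve $x_i^{\vphantom{}}$ of $\Sigma_{g,n}$ (via a longitude, so that $(i_-^{\vphantom{}})_*(x_i^{\vphantom{}}) = \l_i^{-1}(i_+^{\vphantom{}})_*(x_i^{\vphantom{}})\l_i^{\vphantom{}}$) differs from the convention for an interior curve $m_j^{\vphantom{}}$ (via $\l_j' = (i_+^{\vphantom{}})_*(m_j^{\vphantom{}})(i_-^{\vphantom{}})_*(m_j^{\vphantom{}})^{-1}$). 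When a boundary curve $x_i^r$ of $\Sigma_{g,n}$ becomes an interior curve $m'^r_{g_r+k} = \imath_*(x_i^r)$ of $\Sigma_{g',n'}$, the corresponding coordinate transforms as $z_k^{\vphantom{}} \mapsto [m'^r_{g_r+k}, \imath_\#(z_k^{\vphantom{}})^{-1}]$, a commutator, not just $\imath_*(z_k^{\vphantom{}})$. Injectivity of $f$ is still true under the hypothesis on closed boundary components, but not by the route you described; one must look at what this commutator does degree by degree (as in the paper's explicit coordinatewise definition of $f$).

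The fatal gap is in the injectivity of $\tilde\imath$. You propose, given a homology cobordism $W$ between $\tilde\imath(M)$ and $\tilde\imath(N)$, to find a properly embedded $\gamma \times I \times I \subset W$ extending the cut surfaces and then show cutting $W$ produces a homology cobordism between $M$ and $N$; you candidly call exhibiting this cut ``the main technical obstacle.'' That is not a proof---it is precisely the hard step, and there is no reason to expect such a clean 3-manifold inside an arbitrary homology cobordism $W$ (one would have to contend with surgery/handle structures on $W$, uniqueness up to isotopy, and the homology computation, none of which you carry out). The paper sidesteps all of this. It instead produces a genuine left inverse $\tilde\jmath \colon \H_{g',n'} \to \H_{g,n}$ directly: one finds an embedding $\jmath\colon \Sigma_{g',n'} \hookrightarrow E = \Sigma_{g,n}\times I$ extending the slice $\Sigma_{g,n}\times\{1/2\}$; then $\tilde\jmath(M')$ is obtained by cutting the \emph{trivial} homology cylinder $E$ (not a 4-dimensional cobordism) open along $\jmath(\Sigma_{g',n'})$ and filling in with $M'$, producing a homology cylinder over $\Sigma_{g,n}$. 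Well-definedness of $\tilde\jmath$ on cobordism classes is then a short check (glue the pieces of $E\times I$ to a cobordism of $M'$), and $\tilde\jmath\circ\tilde\imath = \id$ immediately yields injectivity. The hypothesis that at most one component of $\overline{\Sigma_{g',n'}-\Sigma_{g,n}}$ meets $\Sigma_{g,n}$ disconnectedly is used to guarantee the existence of the embedding $\jmath$, not to control a cut of a 4-manifold. You should replace your plan with this construction.
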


\begin{proof}
  We simply write $\Sigma:=\Sigma_{g,n}$, $\Sigma':=\Sigma_{g',n'}$ and $\H:=\H_{g,n}$, $\H':=\H_{g',n'}$. By Proposition~\ref{proposition:effect of change}, we may assume that the generating sets of $F$ and $F'$ are as follows. We can assume that each component of $\partial\Sigma$ maps to either the interior of $\Sigma'$ or the boundary of $\Sigma'$. Let $S_r$ be a component of $\overline{\Sigma'-\Sigma}$, which is a surface of genus $g_r$ with $a_r$ boundaries on $S_r\cap \Sigma$ and $n_r$ boundaries on $S_r-\Sigma$. \\
\textbf{Step 1.} Choose basepoints $\ast$ of $\Sigma$ and $\ast'$ of~$\Sigma'$. \\
	To define $\tilde\mu_q^{\vphantom{}}$, we should choose basepoints on boundaries of the surfaces. There are two cases:
	\begin{itemize}
		\item[Case 1.] Two basepoints can be chosen as the same point, i.e.\ there is a common boundary of $\Sigma$ and $\Sigma'$. Choose $\ast=\ast'$ on the boundary.
		\item[Case 2.] Two basepoints cannot be chosen as the same point, i.e.\ there is no common boundary of $\Sigma$ and $\Sigma'$. Then, at least one $S_r$ has a boundary of $\Sigma'$. Choose $\ast$ and $\ast'$ in the same $S_r$. Choose a path $\gamma$ from $\ast'$ to $\ast$ in the $S_r$ for later use.
	\end{itemize}
\textbf{Step 2.} Choose a generating set for $F$. \\

			We fix a generating set for $F$ as in Section~\ref{sec:representation}. Denote the generators corresponding to the boundaries of $S_r \cap \Sigma$ by $x^r_k$ and the other generators by $y_s^{\vphantom{}}$. \\ 
\textbf{Step 3.} Choose a generating set for $F'$. \\
		First we will choose a generating set of $\pi_1(\Sigma',\ast)$.
		Note that $x^r_k=[\alpha^r_k \cdot\beta^r_k \cdot (\alpha^r_k)^{-1}]$ for a closed path $\beta^r_k$ onto the corresponding boundary and a path $\alpha^r_k$ from $\ast$ to $\beta^r_k(0)$. Let $\imath_*\colon F\to \pi_1(\Sigma',\ast)$ be the map induced by $\imath$. We choose $y_s':=i_*(y_s^{\vphantom{}})$. Now we select the other generators of $\pi_1(\Sigma',\ast)$ with regard to $S_r$. We consider two cases (see Figure~\ref{figure:S_r}):
		\begin{enumerate}
			\item[Case 1.] $S_r \not\ni \ast$ \\
  	Choose generators of $\pi_1(S_r,\alpha^r(1))$ corresponding to the $n_r$ boundary components of $\Sigma'$ in $S_r$ and the $g_r$ handles of $S_r$ as in Section~\ref{sec:representation}, and conjugate them by $\alpha^r_1$ so that we obtain generators $x'^r_i, m'^r_j, l'^r_j$ of $\pi_1(\Sigma',\ast)$ for $i\leq n_r$ and $j\leq g_r$.
  	Let $m'^r_{g_r+k}:=\imath_*(x^r_k)$ and $l'^r_{g_r+k}:=[\alpha^r_k \cdot \gamma^r_k \cdot(\alpha^r_{k+1})
^{-1}]$ for a path $\gamma^r_k$ from $\alpha^r_k(1)$ to $\alpha^r_{k+1}(1)$ in $S_r$ and $k=1,\ldots ,a_r-1$.
			\item[Case 2.] $S_r \ni \ast$ (and $\ast'$) \\
  	Choose generators of $\pi_1(S_r,\ast)$ corresponding to the $n_r-1$ boundary components of $\Sigma'$ in $S_r$ except the one containing $\ast'$, and those corresponding to the $g_r$ handles of $S_r$ as in Section~\ref{sec:representation}. They give generators $x'^r_i, m'^r_j, l'^r_j$ of $\pi_1(\Sigma',\ast)$ for $i\leq n_r-1$ and $j\leq g_r$. Let $m'^r_{g_r+k}:=\imath_*(x^r_k)$ and $l'^r_{g_r+k}:=[\alpha^r_k \cdot (\gamma^r_k)^{-1}]$ for a path $\gamma^r_k$ from $\ast$ to $\alpha^r_k(1)$ in $S_r$ and $k=1,\ldots ,a_r-1$.
		\end{enumerate} 
		We define a set $A_r:=\{x'^r_i, m'^r_j, l'^r_j, m'^r_{g_r+k}, l'^r_{g_r+k}\}$.
		Then $(\bigcup_r A_r) \cup \{y_s'\}$ is a generating set for $\pi_1(\Sigma',\ast)$.
			If $\ast \neq \ast'$, replace all the generators by conjugation by $\gamma$ to obtain a generating set for $F'$. Note that we have $\imath_\#\colon F\xrightarrow{\imath_*} \pi_1(\Sigma',\ast) \xrightarrow{\cong} F'$ where the latter is induced by $\gamma$-conjugation.

\begin{figure}[h]
 \begin{center}
\begin{tabular}{cc}
   \includegraphics[scale=.6]{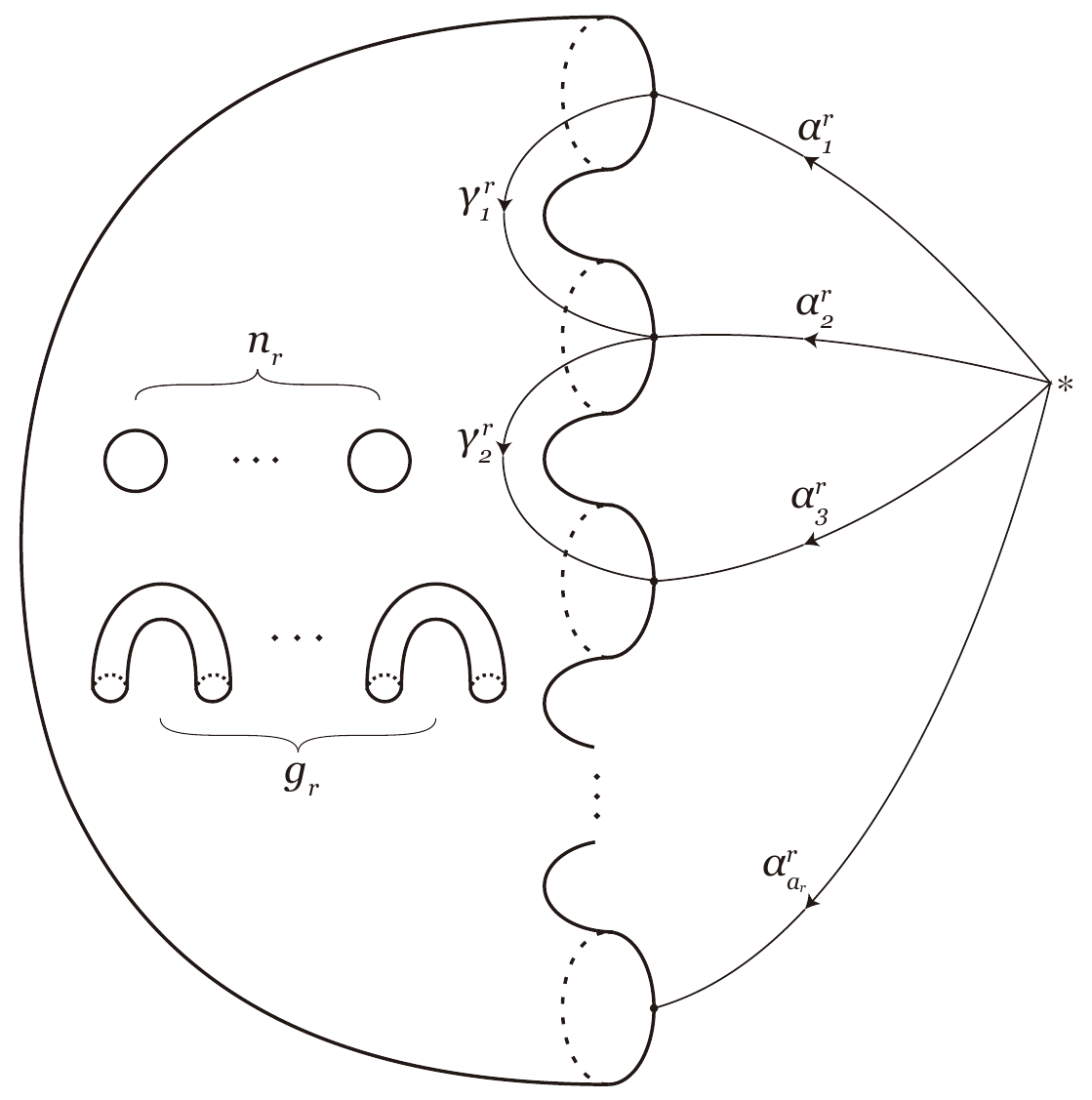} & \includegraphics[scale=.6]{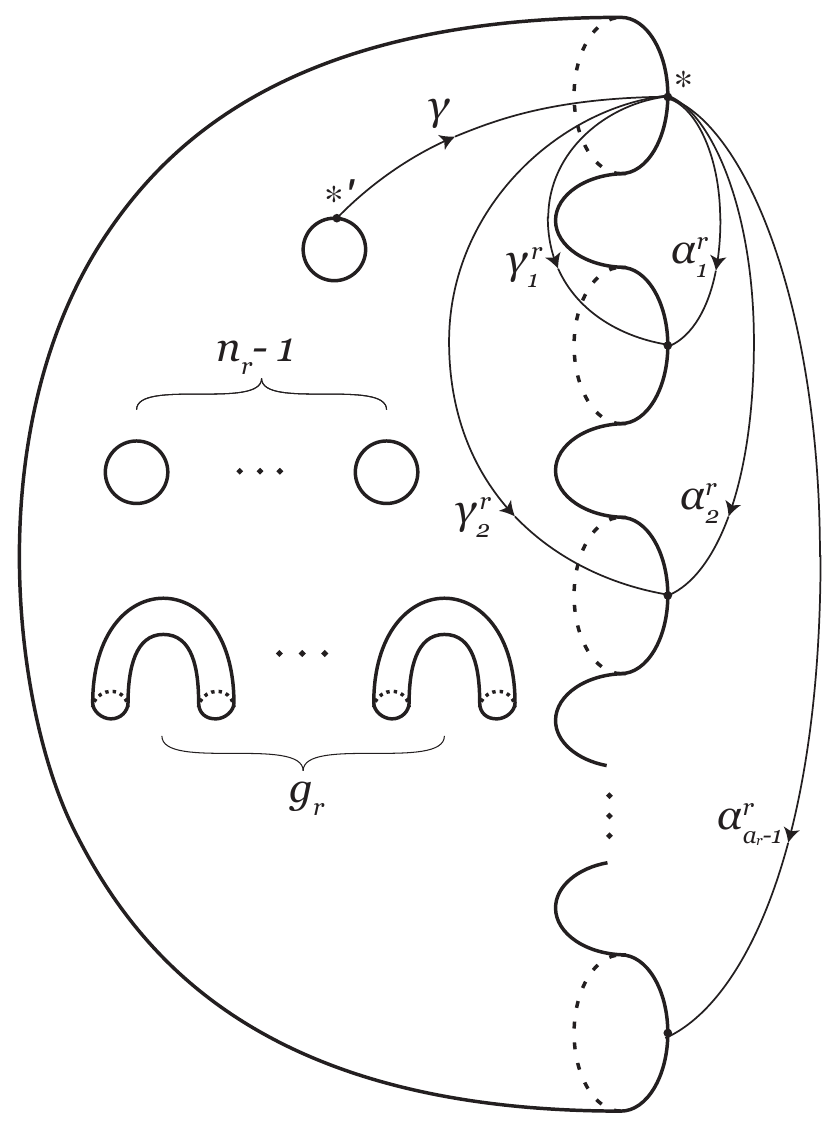} \\
    Case 1.  $S_r \not \ni \ast$  &  Case 2. $S_r \ni \ast$
\end{tabular}
	\caption{$S_r$ and paths to choose generators of $F'$ in Step 3.}
    \label{figure:S_r}
  \end{center}
\end{figure}

Now we define $f$.
We use the following indexing convention for coordinates of elements of $(F/F_q)^{2g+n-1}$, using generators of~$F$.
Recall that the generating set $\{x_k^r, y_s^{\vphantom{}}\}_{r,k,s}$ has been ordered to define $\tilde\mu_q^{\vphantom{}}(-)_k^{\vphantom{}}$ using the $k$th generator. If $a \in \{x_k^r, y_s^{\vphantom{}}\}$ is the $k$th generator, we call the $k$th coordinate of an element in $(F/F_q)^{2g+n-1}$ the \emph{coordinate associated with}~$a$.

The map $f$ can be defined coordinatewise as follows. Let $z\in (F/F_q)^{2g+n-1}$. \\
(1) The coordinates $\eta^r_i,\eta'^r_j,\eta''^r_j$ of $f(z)$ associated with $x'^r_i, m'^r_j, l'^r_j$ are determined by the coordinates $ z_i^{\vphantom{}}$ of $z$ associated with $x_i^r$ for all~$i,j$.
\begin{enumerate}
		\item[Case 1.] $S_r \not\ni \ast$ 
		\begin{eqnarray*}
			\eta^r_i=\imath_\#(z_1^{\vphantom{}}) , \quad
			\eta'^r_j=[m'^r_j,\imath_\#(z_1^{\vphantom{}})^{-1}],\quad 
			\eta'^r_{g_r+k}=[m'^r_{g_r+k},\imath_\#(z_k^{\vphantom{}})^{-1}] , \\
			\eta''^r_j=[l'^r_j,\imath_\#(z_1^{\vphantom{}})^{-1}], \quad
			\eta''^r_{g_r+k}=l'^r_{g_r+k}\;\imath_\#(z_{k+1}^{\vphantom{}})^{-1} \;(l'^r_{g_r+k})^{-1} \; \imath_\#(z_k^{\vphantom{}}) 
		\end{eqnarray*}
		for $i=1,\ldots, n_r$, $j=1,\ldots, g_r$, and $k=1,\ldots, a_r-1$.
			 \item[Case 2.] $S_r \ni \ast$ \\
		 If $a_r>1$, then $\eta^r_i, \eta'^r_j, \eta'^r_{g_r+k}, \eta''^r_j$ are the same as Case 1, and 		$\eta''^r_{g_r+k}=\imath_\#(z_k)$
		for $i=1,\ldots, n_r-1$, $j=1,\ldots, g_r$, and $k=1,\ldots, a_r-1$.
		If $a_r=1$, then all the coordinates associated with $x'^r_i, m'^r_j, l'^r_j$ are~$1$ for $i=1,\ldots, n_r-1$, $j=1,\ldots, g_r$.
	\end{enumerate}
(2) The coordinate $\eta_s^{\vphantom{}}$ of $f(z)$ associated with $y_s'$ is determined by the coordinate $z_s^{\vphantom{}}$ of $z$ associated with $y_s^{\vphantom{}}$.
$$\eta_s^{\vphantom{}}=\imath_\#(z_s^{\vphantom{}}) \textrm{ for all } s.$$

Remark that $f$ is not a homomorphism. 
From the definition of $f$, it is 1-1 if $n_r \geq 1$ for all $r$ and $\imath_\#$ is injective. If every $n_r$ is nonzero, then $\imath_\#$ is injective. Therefore, if every $n_r$ is positive, then $f$ is 1-1.
The verification that the diagram commutes is left to the reader.

Suppose at most one $a_r$ is bigger than 1.
To prove $\tilde\imath\colon \H\to\H'$ is injective, we claim that there is a function $\tilde\jmath \colon \H'\to\H$ so that $\tilde\jmath \circ \tilde\imath = \id_\H$. The surface $\Sigma'$ is obtained from $\Sigma$ by attanching 1-handles to a collar neighborhood of $S_r\cap \Sigma$. This allows us to extend $\Sigma \xrightarrow{\id\times \frac{1}{2}} \Sigma\times I$ to an embedding $\jmath\colon \Sigma' \hookrightarrow \Sigma\times I$.
For example, see Figure~\ref{figure:embedding}.
\begin{figure}[h]
  \begin{center}
    \includegraphics[scale=.55]{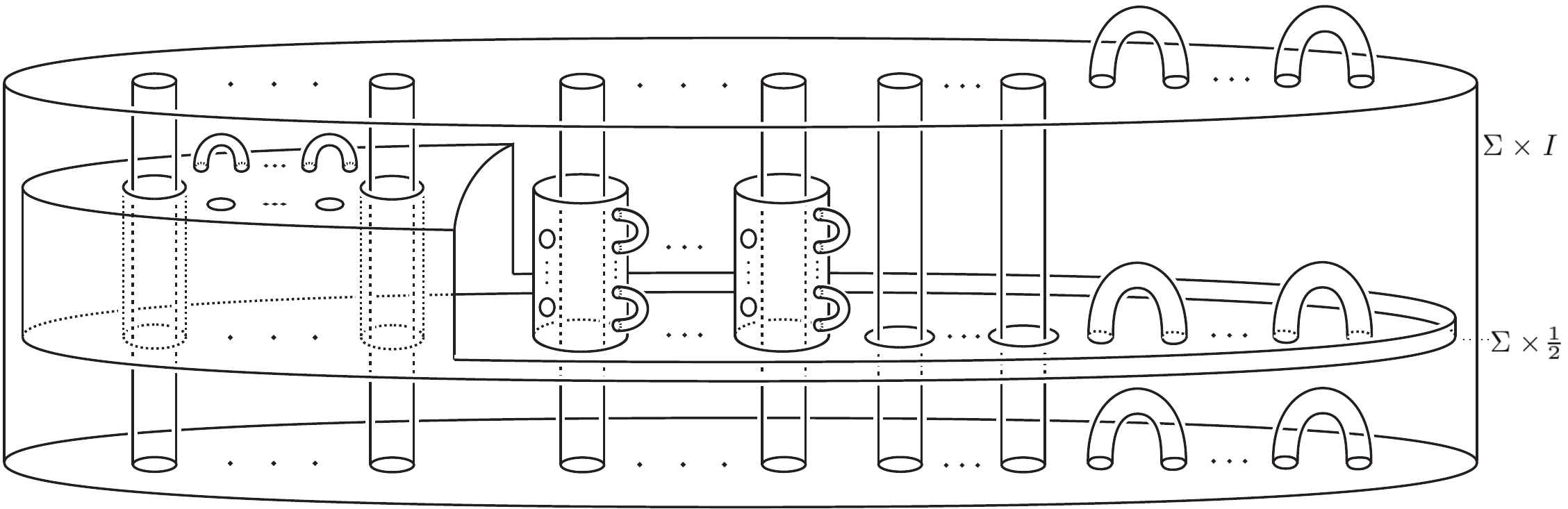}
    \caption{$\jmath (\Sigma') $ in $\Sigma\times I$}
    \label{figure:embedding}
  \end{center}
\end{figure}
We define $\tilde\jmath (M')$ as the manifold obtained by cutting $\Sigma\times I$ open along $\jmath(\Sigma')$ and filling in it with $M'$.
It is easy to check that $\tilde\jmath\colon \H' \to \H$ is well-defined. We obtain the injectivity of $\tilde\imath$ from $\tilde\jmath \circ \tilde\imath = \id_\H$.
\end{proof}

Whenever $f$ is 1-1, $\tilde\imath$ induces injections $\H(q-1)/\H(q) \hookrightarrow \H'(q-1)/\H'(q)$. By considering the cases of $(g,n)$ and $(g',n')$ for which there is an injection $\H_{g,n}\to\H_{g',n'}$, we obtain the following corollary. Note that if $\Sigma_{g,n}\subset \Sigma_{g',n'}$, then $g\leq g'$.

\begin{corollary}
  \label{cor:whole injection}
  For any two pairs $(g,n)$ and $(g',n')$ satisfying $g \leq g'$ and $g+n \leq g'+n'$, there is an injective homomorphism
  $$\H_{g,n} \hooklongrightarrow \H_{g',n'} ,$$
which induces injections
  $$\H_{g,n}(q-1)/\H_{g,n}(q) \hooklongrightarrow \H_{g',n'}(q-1)/\H_{g',n'}(q)$$
for all $q \geq 2$.
\end{corollary}
\begin{proof}
There exists an embedding $\imath\colon \Sigma_{g,n}\hookrightarrow\Sigma_{g',n'}$ such that both $f$ and $\tilde\imath$ are injective: $\overline{\Sigma_{g',n'}-\Sigma_{g,n}} =S_1$ is connected, and if  $n>n'$, $a_1=n-n'+1$, $n_1=1$, and $g_r=g'+n'-g-n$; otherwise, $a_1=1$, $n_1=n'-n+1$, and $g_r=g'-g$. 
The conclusion follows from Theorem~\ref{theorem:whole injection}
\end{proof}

 The following theorem gives more information on the group $\H(q-1)/\H(q)$ directly via~$\tilde\mu_{q}^{\vphantom{}}$: 
\begin{theorem}
  \leavevmode \Nopagebreak \label{theorem:rank}
	\begin{enumerate}
	  \item $\tilde\mu_{q}^{\vphantom{}}$ induces an injective homomorphism
	  $$\tilde\mu_{q}^{\vphantom{}}\colon \H(q-1)/\H(q)\hooklongrightarrow  (F_{q-1}/F_{q})^{2g+n-1}.$$
	  Hence $\H(q-1)/\H(q)$ is a finitely generated free abelian group.
	\item We have
			$$\max\{r_q(2g),r_q(g+n-1)\} \leq \rank \H(q-1)/\H(q) \leq r_q(2g+n-1)$$
		where $N_q(m)=\frac{1}{q}\sum_{d|q} \varphi(d) (m^{q/d})$, $\varphi$ is the M\"obius function
		and $r_q(m)=m N_{q-1}(m)-N_q(m)$.
	\end{enumerate}
\end{theorem}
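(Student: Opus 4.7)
For part (1), Corollary~\ref{cor:subgroup}(4) asserts that $\tilde\mu_q$ restricts to a group homomorphism on $\H(q-1) = \Ker\tilde\mu_{q-1}$, with kernel $\H(q)$ by definition. For $M \in \H(q-1)$, the triviality of $\mu_{q-1}(M)_i$ in $F/F_{q-1}$ forces the associated longitude into $\pi_1(M)_{q-1}$, so each coordinate of $\tilde\mu_q(M)$ lies in the image of $F_{q-1}/F_q \hookrightarrow F/F_q$. This produces the injection $\H(q-1)/\H(q) \hookrightarrow (F_{q-1}/F_q)^m$ with $m := 2g+n-1$, and since $F_{q-1}/F_q$ is free abelian of rank $N_{q-1}(m)$ by the Witt formula, part (1) follows.

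For the lower bound in part (2), apply Corollary~\ref{cor:whole injection}. The pair $(g,1)$ satisfies $g \leq g$ and $g+1 \leq g+n$, giving an injection $\H_{g,1}(q-1)/\H_{g,1}(q) \hookrightarrow \H(q-1)/\H(q)$. Since $\H(q)=\H[q]$ for $n=1$, the rank of the former equals $r_q(2g)$ by \cite{GL}. Similarly, $(0,g+n)$ gives an injection from the string-link quotient, of rank $r_q(g+n-1)$ by \cite{Or}. Taking the maximum gives the claimed lower bound.

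For the upper bound, I construct a surjective $\Z$-linear map $\Psi\colon (F_{q-1}/F_q)^m \to F_q/F_{q+1}$ whose kernel contains $\tilde\mu_q(\H(q-1))$. The relation driving $\Psi$ arises from the identity $\eta_{q+1}(M)([\partial_n]) = [\partial_n]$ in $F/F_{q+1}$, valid since $i_+|_{\partial\Sigma} = i_-|_{\partial\Sigma}$. Setting $w = [\partial_n] = \prod_i x_i \prod_j [m_j, l_j]$ and writing $A_i, B_j, C_j \in L_{q-1} := F_{q-1}/F_q$ for the components of $\tilde\mu_q(M)$, expansion of $\eta_{q+1}(M)(w) w^{-1}$ in $F/F_{q+1}$ via the commutator identity $a^{-1}ba = b[b,a]$ and centrality of $F_q/F_{q+1}$ in $F/F_{q+1}$ should yield (for $q \geq 3$)
\[
0 = \eta_{q+1}(M)(w) w^{-1} = \sum_{i} [x_i, A_i] - \sum_j \bigl([B_j, l_j] + [m_j, C_j]\bigr) \in L_q := F_q/F_{q+1},
\]
which defines $\Psi$. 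Its image contains $[x_i, L_{q-1}]$, $[m_j, L_{q-1}]$, and $[l_j, L_{q-1}]$ for all $i, j$, which together span $L_q = [L_1, L_{q-1}]$, so $\Psi$ is surjective. Hence $\rank \ker\Psi = m N_{q-1}(m) - N_q(m) = r_q(m)$.

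The main obstacle is justifying the displayed formula for $\eta_{q+1}(M)(w) w^{-1}$ in $L_q$. Since $\eta_{q+1}(M)$ a priori depends on the higher-order lifts $\mu'_{q+1}(M)_j$ and $\mu''_{q+1}(M)_j$, one must verify that the $L_q$-projection depends only on their reductions $B_j, C_j \in L_{q-1}$. This will follow from observing that cross-commutators of elements of $F_{q-1}$ lie in $[F_{q-1}, F_{q-1}] \subset F_{2q-2}$, which is contained in $F_{q+1}$ for $q \geq 3$, combined with careful accounting via centrality of $F_q/F_{q+1}$; the case $q = 2$ yields an additional $\sum_j [B_j, C_j]$ term in $L_2$, but the resulting $\Psi$ remains surjective by the same generating-set argument.
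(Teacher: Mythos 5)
Your proposal is correct and follows essentially the same approach as the paper: part (1) via Corollary~\ref{cor:subgroup}(4), the lower bound via Corollary~\ref{cor:whole injection} applied to $(g,1)\hookrightarrow(g,n)$ and $(0,g+n)\hookrightarrow(g,n)$ together with the known ranks from \cite{GL, Or}, and the upper bound by exhibiting the linear relation in $F_q/F_{q+1}$ coming from $\eta_{q+1}(M)([\partial_n])=[\partial_n]$ and taking the kernel of the resulting surjection onto $F_q/F_{q+1}$. The map you call $\Psi$ is exactly the paper's $\mathfrak{p}$ (which the paper writes with the additional $[b_j,c_j]$ factor you flag as vanishing for $q\ge3$), so the two arguments coincide.
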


Note that 
\begin{align*}
N_q(2g+n-1) &= \rank F_q/F_{q+1} \qquad \textrm{and} \\
r_q(2g+n-1)&=(2g+n-1)\;\rank F_{q-1}/F_q -\rank F_q/F_{q+1} \\
&=\Coker\{H_3(F/F_{q})\to H_3(F/F_{q-1})\}.
\end{align*} 

We remark that the facts 
\begin{align*}
\rank \H_{g,1}(q-1)/\H_{g,1}(q) &= r_q(2g)\qquad \textrm{and} \\
\rank \H_{0,n}(q-1)/\H_{0,n}(q) &= r_q(n-1)
\end{align*}
 were shown in \cite{GL} and \cite{Or}, respectively.

\begin{proof}
\begin{enumerate}
  	\item  Since $\tilde\mu_{q}^{\vphantom{}}\colon \H(q-1)\to (F_{q-1}/F_{q})^{2g+n-1}$ is a homomorphism, for $M, N \in \H(q-1)$, $[M]=[N]$ in $\H(q-1)/\H(q)$ if and only if $\tilde\mu_{q}^{\vphantom{}}(M)= \tilde\mu_{q}^{\vphantom{}}(N)$. The well-definedness and the injectivity of the map follow.
	\item Consider the map
\begin{align*}
\mathfrak{p}\colon (F_{q-1}/F_{q})^{2g+n-1}&\to F_{q}/F_{q+1} \\
(a_1^{\vphantom{}},\ldots,a_{n-1}^{\vphantom{}},b_1^{\vphantom{}},\ldots, b_g^{\vphantom{}}, c_1^{\vphantom{}},\ldots, c_g^{\vphantom{}}) &\longmapsto\prod_{i=1}^{n-1} [x_i^{\vphantom{}},a_i^{\vphantom{}}]\prod_{j=1}^{g} [l_j^{\vphantom{}},b_j^{\vphantom{}}][b_j^{\vphantom{}},c_j^{\vphantom{}}][c_j^{\vphantom{}},m_j^{\vphantom{}}].
\end{align*}
 This $\mathfrak{p}$ is a surjective homomorphism, and the kernel has rank $r_q(2g+n-1)$.  For the upper bound of the rank of $\H(q-1)/\H(q)$, we claim that $\mathfrak{p}\circ \tilde\mu_{q}^{\vphantom{}}$ is trivial on~$\H(q-1)$. For any homology cylinder $M$, $\eta_{q}^{\vphantom{}}(M)$ fixes $[\partial_n^{\vphantom{}}]$ for all~$q$. Using the generating set of $F$ in Figure~\ref{figure:Sigma}, the element $[\partial_n^{\vphantom{}}] = \prod_i x_i^{\vphantom{}}\prod_j [m_j^{\vphantom{}},l_j^{\vphantom{}}] \in F$. For $M \in \H(q-1)$, $\eta_{q+1}^{\vphantom{}}(M)([\partial_n^{\vphantom{}}])=[\partial_n^{\vphantom{}}]$ is arranged to 
$$\prod_i[x_i^{\vphantom{}},\mu_{q}^{\vphantom{}}(M)_i^{\vphantom{}}]\prod_j[l_j^{\vphantom{}},\mu_{q}'(M)_j^{\vphantom{}}][\mu_{q}'(M)_j^{\vphantom{}},\mu_{q}''(M)_j^{\vphantom{}}][\mu_{q}''(M)_j^{\vphantom{}},m_j^{\vphantom{}}]=1.$$ 
Therefore $\mathfrak{p}\circ \tilde\mu_{q}^{\vphantom{}}=\mathfrak{p}\circ (\mu_{q}^{\vphantom{}},\mu_{q}',\mu_{q}'')$ is trivial. 	
The lower bound comes from Corollary~\ref{cor:whole injection} and the known ranks of $\H_{g,1}(q-1)/\H_{g,1}(q)$ and $\H_{0,n}(q-1)/\H_{0,n}(q)$ \cite{GL,Or} stated above.  \qedhere 
	\end{enumerate}
\end{proof}

We define $\H^0[q]:=\{M\in\H[q]~|~\mu_2(M)=1\}$. Then it is a subgroup of $\H$ by Corollary~\ref{cor:subgroup}, and $\H(q) \subset \H^0[q]\subset \H(q-1)$ by Lemma~\ref{lemma:MKS}.
Thus we can refine the filtration~$\{\H(q)\}$ of $\H$ as follows:
$$\cdots \subset \H(q) \subset \H^0[q] \subset \H(q-1) \subset \H^0[q-1] \subset \cdots \subset \H(2) \subset \H^0[2] \subset \H(1)=\H$$
Consider two injections $\mu_q^{\vphantom{}}$ and $(\mu_q',\mu_q'')$ from the injection $\tilde\mu_{q}^{\vphantom{}}$ as follows:
$$\mu_{q}^{\vphantom{}}\colon \frac{\H^0[q]}{\H(q)} \hooklongrightarrow  (F_{q-1}/F_{q})^{n-1}$$
$$(\mu_{q}',\mu_{q}'') \colon \frac{\H(q-1)}{\H^0[q]} \hooklongrightarrow  (F_{q-1}/F_{q})^{2g}$$
The two subquotients of $\H$ are also finitely generated free abelian, and there is an isomorphism
$$ \frac{\H(q-1)}{\H(q)} \cong \frac{\H(q-1)}{\H^0[q]}\times \frac{\H^0[q]}{\H(q)} ,$$
which is not canonical.

In the remaining part of this section, we introduce some notions analogous to the boundary (string) links and the $\hF$-(string) links.

\subsection{Boundary homology cylinders}
  \label{sec:boundary homology cylinder}
As a generalization of boundary (string) links, we define boundary homology cylinders. 
\begin{definition}
  A homology cylinder $(M,i_+^{\vphantom{}},i_-^{\vphantom{}})$ over $\Sigma$ is said to be a \emph{boundary homology cylinder} if there exists a homomorphism $\phi \colon \pi_1(M) \to \pi_1(\Sigma)$ such that $\phi \circ (i_+^{\vphantom{}})_* = \id = \phi \circ (i_-^{\vphantom{}})_*$ and $\phi(\l_i^{\vphantom{}})=1$ for all~$i$.
\end{definition}

Geometrically, the boundary homology cylinder $M$ can be defined to be a homology cylinder such that there exists $\psi\colon M \to \Sigma$ making the left diagram below commute.
$$\begin{diagram}
    \node{\partial M} \arrow{s,J} \arrow{e,t}{i_+^{-1} \cup i_-^{-1}} \node{\Sigma}\\
    \node{M} \arrow{ne,b,..}{\psi}
\end{diagram}
\qquad \qquad
\begin{diagram}
    \node{i_+^{\vphantom{}}(\Sigma)} \arrow{s,J} \arrow{e,t}{i_+^{-1}} \node{\Sigma}\\
    \node{M} \arrow{ne,b,..}{\psi_+^{\vphantom{}}}
\end{diagram}
$$

Let $(M,i_+^{\vphantom{}},i_-^{\vphantom{}})$ be a homology cylinder.
We consider splittings $\phi_+^{\vphantom{}}$ and $\phi_-^{\vphantom{}}$ of $(i_+^{\vphantom{}})_*$ and $(i_-^{\vphantom{}})_*$, respectively. Note that $\phi_+^{\vphantom{}}$ exists if and only if $\psi_+^{\vphantom{}}$ exists in the above right diagram. The existence of $\phi_+^{\vphantom{}}$ does not imply that of $\phi_-^{\vphantom{}}$. Even if both $\phi_+^{\vphantom{}}$ and $\phi_-^{\vphantom{}}$ exist, they can be different. 
Suppose $\phi_+^{\vphantom{}}$ exists. Then $\eta_q^{\vphantom{}}(M)$ is trivial if and only if $\phi_+^{\vphantom{}}$ is also a splitting of $(i_-^{\vphantom{}})_*^{\vphantom{}}$ due to $\bigcap F_q= 0$. However,  such a common splitting of $(i_+^{\vphantom{}})_*$ and $(i_-^{\vphantom{}})_*$ does not guarantee that  the homology cylinder is a boundary homology cylinder. 
Such an example can be found by considering homology cylinders of the form $(\Sigma\times I/\sim, \id\times 0, \id\times 1 \circ \varphi)$ with nonvanishing $\mu_q^{\vphantom{}}$, where $\varphi$ is a composition of Dehn twists about boundaries. The condition $\phi(\l_i^{\vphantom{}})=1$ for all $i$, or equivalently all $\mu_q^{\vphantom{}}$ vanish, is necessary to satisfy the geometric definition of the boundary homology cylinder. 
In conclusion, we have the following:

\begin{proposition}
  \label{proposition:splitting}
A homology cylinder $M$ is a boundary homology cylinder if and only if the following hold:
\begin{enumerate}
    \item   There is a splitting of $(i_+^{\vphantom{}})_*$ or $(i_-^{\vphantom{}})_*$.
    \item   $\tilde\mu(M)$ vanishes.
\end{enumerate}
\end{proposition}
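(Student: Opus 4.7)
The plan is to verify both directions directly by leveraging two facts the excerpt has already set up: Stallings' theorem promotes either marking to an isomorphism on lower central series quotients, and the longitudes $\tilde\l_k$ explicitly encode the discrepancy between $(i_+)_*$ and $(i_-)_*$ on the chosen generators.

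For the forward direction, I would take the witnessing homomorphism $\phi\colon \pi_1(M)\to F$ from the definition of boundary homology cylinder; (1) is then immediate. For (2), I would observe that $\phi_q$ must coincide with $(i_+)_{*q}^{-1}$, since $\phi$ is a left inverse of $(i_+)_*$ and $(i_+)_{*q}$ is a Stallings isomorphism. Consequently $\tilde\mu_q(M)_k = \phi_q(\tilde\l_k)$, so it suffices to check $\phi(\tilde\l_k)=1$. The values $\phi(\l'_j) = m_j m_j^{-1} = 1$ and $\phi(\l''_j) = l_j l_j^{-1} = 1$ follow straight from the definitions $\l'_j = (i_+)_*(m_j)(i_-)_*(m_j)^{-1}$ and $\l''_j = (i_+)_*(l_j)(i_-)_*(l_j)^{-1}$ together with $\phi\circ(i_\pm)_*=\id$, while $\phi(\l_i)=1$ is part of the very definition of a boundary homology cylinder.

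For the backward direction, I would take a splitting $\phi$ of $(i_+)_*$ supplied by (1) (the case of $(i_-)_*$ is symmetric) and prove that $\phi$ itself realizes $M$ as a boundary homology cylinder. The delicate point is to upgrade $\phi(\tilde\l_k)\in F_q$, which follows from (2) via the identification $\phi_q = (i_+)_{*q}^{-1}$, to the on-the-nose equality $\phi(\tilde\l_k)=1$ in $F$; this invokes freeness of $F=\pi_1(\Sigma_{g,n})$ (valid since $n\geq 1$) and hence residual nilpotence, giving $\bigcap_q F_q = \{1\}$. Once $\phi(\tilde\l_k)=1$ is established for every $k$, I would plug this into the three relations
\[
(i_-)_*(x_i) = \l_i^{-1}(i_+)_*(x_i)\l_i,\quad (i_-)_*(m_j) = (\l'_j)^{-1}(i_+)_*(m_j),\quad (i_-)_*(l_j) = (\l''_j)^{-1}(i_+)_*(l_j),
\]
and read off $\phi\circ(i_-)_*=\id$ on the generating set $\{x_i,m_j,l_j\}$, hence on all of $F$. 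The longitude-vanishing condition $\phi(\l_i)=1$ is already part of the established $\phi(\tilde\l_k)=1$, so $\phi$ satisfies every requirement in the definition.

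I expect the only step requiring any thought to be the residual-nilpotence argument in the backward direction; everything else is a substitution into the formulas already recorded in Section~\ref{sec:longitude}, together with the observation that a splitting of a Stallings isomorphism is forced to be the two-sided inverse on each nilpotent quotient.
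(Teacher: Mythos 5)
Your proof is correct and follows essentially the same route the paper sketches in the discussion preceding the proposition: the paper reduces the statement to the observations that (i) $\eta_q(M)=\id$ for all $q$ is equivalent to a given splitting $\phi_+$ of $(i_+)_*$ also splitting $(i_-)_*$ (via $\bigcap_q F_q = 1$), and (ii) the remaining requirement $\phi(\l_i)=1$ is equivalent to $\mu_q(M)=1$ for all $q$ — and $\tilde\mu$ vanishing is exactly the conjunction of $\eta_q=\id$ and $\mu_q=1$. You work with $\tilde\mu$ directly rather than splitting it into the pair $(\eta_q,\mu_q)$, but the key facts used (Stallings forcing $\phi_q=(i_+)_{*q}^{-1}$, the longitude relations encoding $(i_-)_*$ in terms of $(i_+)_*$, and residual nilpotence of the free group) are the same. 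One small remark: in the ``symmetric'' case where $\phi$ splits $(i_-)_*$, you cannot literally identify $\phi_q$ with $(i_+)_{*q}^{-1}$ as written; instead, note that $\tilde\mu_q(M)_k=1$ is equivalent to $\tilde\l_k\in\pi_1(M)_q$, so $\phi_q(\tilde\l_k)=1$ for any homomorphism $\phi\colon\pi_1(M)\to F$, and the rest goes through.
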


The subset of boundary homology cylinders are closed under the multiplication and inverting (=orientation reversing and changing two markings) of $\C_{g,n}$, but a homology cylinder which is homology cobordant to a boundary homology cylinder may not be a boundary homology cylinder. (For example, \cite{Sm} provides such a string link.) We define $\B\H$ to be the subgroup of homology cobordism classes of boundary homology cylinders.

Remark that for a framed string link $\sigma$ in a homology 3-ball, its exterior $E_{\sigma}$ is a boundary homology cylinder if and only if its closure $\hat\sigma$ is a boundary link and the framing of $\sigma$ induces the 0-framing of $\hat\sigma$.
It follows from that $\pi_1(E_{\hat\sigma})=\pi_1(E_\sigma)/\langle\langle i_+^{\vphantom{}}(x_i^{\vphantom{}})= i_-^{\vphantom{}}(x_i^{\vphantom{}}) \rangle\rangle$.

\subsection{$\hF$-homology cylinders}

  \label{sec:hF-homology cylinder}
We define $\hF$-homology cylinders as an analog of the $\hF$-(string) links. Here $\hG$ means the algebraic closure of a group $G$ with respect to $\Z$-coefficient or $\zp$-coefficient in the sense of~\cite{C08}. The former was called the $HE$-closure in \cite{L90}. For both $\Z$ and $\zp$, everything in this paper holds.

It is known that for CW-complexes $X$ and $Y$ with finite 2-skeletons, if $X\to Y$ is 2-connected on the integral homology, then it induces an isomorphism on $\widehat{\pi_1(-)}$ \cite{L89, C08}.
Hence the markings $\Sigma\to M$ for a homology cylinder $M$ induce isomorphisms $\hF \xrightarrow{\cong} \widehat{\pi_1(M)}$.

\begin{definition}
A homology cylinder $M$ is called an \emph{$\hF$-homology cylinder} if $\tilde\l_k^{\vphantom{}} \in \pi_1(M)$ vanishes in $\widehat{\pi_1(M)}$ for every $k=1,\ldots,2g+n-1$.
\end{definition}

Note that $M$ is an $\hF$-homology cylinder if and only if $\hat{i}\colon \Sigma \to \hM$ induces an isomorphism on~$\widehat{\pi_1(-)}$. This can be shown using properties of the algebraic closure functor (see the proof of \cite[Proposition~6.3]{C10}).
The $\hF$-homology cylinders form a subgroup of $\H_{g,n}$, by the following lemma:

\begin{lemma}
  \leavevmode \Nopagebreak 
  \label{lemma:hF=closed}
  \begin{enumerate}
    \item    The product of two $\hF$-homology cylinders is an $\hF$-homology cylinder. 
    \item   If $M$ is an $\hF$-homology cylinder, then $(-M)$ is also an $\hF$-homology cylinder.
    \item   A homology cylinder which is homology cobordant to an $\hF$-homology cylinder is an $\hF$-homology cylinder.
  \end{enumerate}
\end{lemma}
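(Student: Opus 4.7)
The plan is to base all three parts on two ingredients already in the excerpt: (a) a map of CW-complexes with finite 2-skeletons which is 2-connected on $H_*(-;\Z)$ (or $H_*(-;\zp)$) induces an isomorphism on $\widehat{\pi_1(-)}$, and (b) the equivalent formulation noted just before the lemma, that $M$ is an $\hF$-homology cylinder iff the marking $\hat{i}\colon \Sigma \to \hM$ induces an isomorphism on $\widehat{\pi_1(-)}$. The recurring observation is that whenever homology cylinders are glued along markings, or when a homology cylinder embeds into a homology cobordism, the relevant inclusions are integral homology isomorphisms by Mayer--Vietoris (using that markings are homology isomorphisms), so by (a) they induce isomorphisms on $\widehat{\pi_1(-)}$.

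For part (1), I reuse the calculation in the proof of Proposition~\ref{proposition:product formula}: for $\imath_M\colon M \to M\cdot N$ and $\imath_N\colon N \to M\cdot N$, one has $\tilde\l_k(M\cdot N) = (\imath_M)_*(\tilde\l_k(M))\cdot (\imath_N)_*(\tilde\l_k(N))$ in $\pi_1(M\cdot N)$. Since $\imath_M$ and $\imath_N$ are homology isomorphisms, they induce isomorphisms $\widehat{\pi_1(M)}\cong \widehat{\pi_1(M\cdot N)} \cong \widehat{\pi_1(N)}$. By hypothesis the two factors vanish in $\widehat{\pi_1(M\cdot N)}$, hence so does their product.

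For part (2), I will use the closure characterization. The closures $\hM$ and $\widehat{-M}$ coincide as topological spaces (the identification $i_+(z)\sim i_-(z)$ is symmetric in $i_\pm^{\vphantom{}}$), and the marking $\hat{i}\colon \Sigma \to \widehat{\pm M}$ is the same underlying continuous map in both cases, so it induces literally the same homomorphism on $\widehat{\pi_1(-)}$, which is an isomorphism for $M$ iff it is for $-M$. For part (3), let $W$ be a homology cobordism between $M$ and $M'$. The inclusions $M\hookrightarrow W$ and $M'\hookrightarrow W$ are homology isomorphisms by definition, hence by (a) induce isomorphisms $\widehat{\pi_1(M)} \cong \widehat{\pi_1(W)} \cong \widehat{\pi_1(M')}$. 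As already used in the proof that $\tilde\mu_q$ is a homology cobordism invariant, the elements $\tilde\l_k(M)$ and $\tilde\l_k(M')$ map to the same element of $\pi_1(W)$. Thus if $\tilde\l_k(M)$ vanishes in $\widehat{\pi_1(M)}$ it vanishes in $\widehat{\pi_1(W)}$, which then forces $\tilde\l_k(M')$ to vanish in $\widehat{\pi_1(M')}$. The only real care needed is the basepoint/identification bookkeeping in part~(2); everything else is mechanical.
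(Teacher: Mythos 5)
Your proof is correct and follows essentially the same route as the paper: part (1) uses functoriality of $\widehat{\pi_1(-)}$ together with the product formula for $\tilde\l$, and part (3) transports $\tilde\l$ across the homology cobordism $W$ via the induced isomorphisms on algebraic closures, exactly as in the paper. The only cosmetic difference is in part (2), where the paper just observes $\tilde\l(-M)_k = \tilde\l(M)_k^{-1}$ in $\pi_1(M)$ and concludes immediately, whereas you pass through the closure characterization and the identification $\hM = \widehat{-M}$ as spaces — both are valid one-line arguments.
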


\begin{proof}
  \begin{enumerate}
  \item
    Let $M=(M,i_+^{\vphantom{}},i_-^{\vphantom{}})$ and $M'=(M',i_+',i_-')$ be $\hF$-homology cylinders. By the Seifert-van Kampen theorem,
$\pi_1(M\cdot M')$ is the pushout of $(i_+^{\vphantom{}})_*$ and~$(i_-^{\vphantom{}})_*$, and $\tilde\l(M\cdot M')_k^{\vphantom{}}$ is the product of $\tilde\l(M)_k^{\vphantom{}}$ and $\tilde\l(M')_k^{\vphantom{}}$ in $\pi_1(M\cdot M')$. All $\tilde\l(M\cdot M')_k^{\vphantom{}}$ vanish in $\widehat{\pi_1(M\cdot M'})$ by the following commutative diagram:
      $$\begin{diagram}
    \dgARROWLENGTH=.1\dgARROWLENGTH    \dgHORIZPAD=.3em
        \node{\pi_1(\Sigma)} \arrow[2]{s,l}{(i_+')_*} \arrow{se} \arrow[2]{e,t}{(i_-^{\vphantom{}})_*} \node[2]{\pi_1(M)} \arrow{s,-} \arrow{se} \\
    \node[2]{\widehat{\pi_1(\Sigma)}} \arrow[2]{s} \arrow[2]{e} \node{} \arrow{s} \node{\widehat{\pi_1(M)}} \arrow[2]{s}    \\
    \node{\pi_1(M')}  \arrow{se} \arrow{e,-}   \node{} \arrow{e}   \node{\pi_1(M\cdot M')} \arrow{se}  \\
    \node[2]{\widehat{\pi_1(M')}} \arrow[2]{e} \node[2]{\widehat{\pi_1(M\cdot M'})}
  \end{diagram}$$
  \item It is obvious since $\tilde\l(-M)_k^{\vphantom{}}=\tilde\l(M)^{-1}_k$ in $\pi_1(M)$.
  \item 
  Let $(M,i_+^{\vphantom{}},i_-^{\vphantom{}})$ and $(M',i_+',i_-')$ be homology cylinders.
  Suppose $M$ is an $\hF$-homology cylinder and $W$ is a homology cobordism between $M$ and $M'$. Consider the commutative diagram below.
  $$  \begin{diagram} \dgARROWLENGTH=0.6\dgARROWLENGTH  \dgHORIZPAD=3.7ex \dgVERTPAD=2.7ex
    \node{\Sigma \cup_\partial \Sigma} \arrow{s,l,J}{i_+'\cup i_-'} \arrow{e,t,J}{i_+^{\vphantom{}}\cup i_-^{\vphantom{}}} \node{M} \arrow{s,J} \\
    \node{M'} \arrow{e,J} \node{W}
  \end{diagram}  $$
  We should check that the elements $\tilde\l(M')_k^{\vphantom{}}$ of $\pi_1(M')$ vanish in $\widehat{\pi_1(M)}$. Both $\tilde\l(M)_k^{\vphantom{}}$ and $\tilde\l(M')_k^{\vphantom{}}$ come from the same element of $\pi_1(\Sigma\cup_\partial \Sigma)$ along $(i_+^{\vphantom{}}\cup i_-^{\vphantom{}})_*$ and  $(i_+'\cup i_-')_*$ so they are mapped to the same element along isomorphisms $\widehat{\pi_1(M)} \xrightarrow{\cong} \widehat{\pi_1(W)} \xleftarrow{\cong} \widehat{\pi_1(M')}$ induced by inclusions. Since $\tilde\l(M)_k^{\vphantom{}}$ vanishes in $\pi_1(\hM)$ for all $k$, so does~$\tilde\l(M')_k^{\vphantom{}}$.    \qedhere
  \end{enumerate}
\end{proof}

We denote the subgroup of $\hF$-homology cylinders by~$\hH$. 

Levine defined a notion of an $\hF$-link using his algebraic closure which involves a certain normal generation condition in \cite{L89}. We denote Levine's algebraic closure by $\hG^\mathrm{Lev}$ to avoid confusion. The definition is as follows:
a link $L$ is called an \emph{$\hF$-link} if a meridian map into link exterior $E_L$ induces an isomorphism on $\widehat{\pi_1(-)}^\mathrm{Lev}$ and the preferred longitudes vanish in $\widehat{\pi_1(E_L)}^\mathrm{Lev}$. 
In this paper, we use a modified definition by replacing $\hG^\mathrm{Lev}$ by our $\hG$ as in \cite{C10}. 
Levine's $\hF$-link is an $\hF$-link in our sense, though the converse is open.

For a framed string link $\sigma$ in a homology 3-ball, its exterior $E_\sigma$ is an $\hF$-homology cylinder if and only if its closure $\hat\sigma$ is an $\hF$-link (in our sense) and the framing of $\sigma$ induces the 0-framing of $\hat\sigma$. It follows by considering $$\pi_1(E_\sigma)\twoheadrightarrow \pi_1(E_{\hat\sigma})\twoheadrightarrow \pi_1(\widehat{E_{\sigma}})=\pi_1(E_\sigma)/\langle\langle \l_i^{\vphantom{}}\rangle\rangle$$ where $E_{\hat\sigma}$ is the exterior of the link~$\hat\sigma$.  

\begin{remark}
  \leavevmode \Nopagebreak 
  \begin{enumerate}
 	\item Since a boundary homology cylinder is an $\hF$-homology cylinder, $\B\H \subset \hH$. In general, the inclusion is strict because it is known that there are $\hF$-string links which are not concordant to any boundary string link \cite{CO}.
	\item $\hH \subseteq \H(\infty)$, but we do not know the converse. This question is a homology cylinder version of a long-standing conjecture that a link with vanishing $\bar\mu$-invariants is an $\hF$-link in the sense of Levine.
	\item For the mapping class group $\M$ over $\Sigma$, we note that $\M \cap \H(\infty) = 0$. This may be used to find links which are not fibered but homologically fibered as in~\cite{GS}.
  \end{enumerate}
\end{remark}

\section{Hirzebruch-type invariants}
  \label{sec:hirzebruch}

We review some definitions in \cite{C10,C09}. Let $X$ be a CW complex. 
A tower
$$X_{(h)} \to \cdots \to X_{(1)} \to X_{(0)} = X$$
of covering maps is called a \emph{$p$-tower of height $h$} for $X$ if
each $X_{(t+1)} \to X_{(t)}$ is a connected cover whose covering transformation group is a finite
abelian $p$-group.
For a power $d$ of $p$, a \emph{$\Z_d$-valued $p$-structure of height $h$} for $X$ is a pair $(\{X_{(t)}\},\phi)$ of a $p$-tower of height $h$ for $X$ and a surjective character $\phi \colon \pi_1(X_{(h)}) \twoheadrightarrow \Z_d$.
We omit the word ``$\Z_d$-valued'' from now on.
A $p$-structure of height $h$ is equivalent to a $p$-tower of height $h+1$ such that the $(h+1)$st cover is a $\Z_d$-cover of the $h$th cover.
For a $p$-structure of height $h$ for $X$, we usually denote by $X_{(h+1)}$ the $\Z_d$-cover of the top cover $X_{(h)}$ determined by~$\phi$.

We recall the definition of the Hirzebruch-type intersection form defect invariant \cite[Definition~2.2]{C10}.
Let $\T$ be a $p$-structure $(\{M_{(t)}\},\phi)$ of height $h$ for a closed 3-manifold~$M$. Suppose $r(M_{(h)},\phi)=0$ in the bordism group $\Omega_3(B\Z_d)$ for some~$r>0$. Then there is a 4-manifold $W$ bounded by $rM$ over~$\phi$. Choose a subring $\mathcal{R}$ of $\Q$ containing $1/r$. Define
$$\l(M,\T) = \frac{1}{r} \otimes ([\l_{\Q(\zeta_d)}(W)]-[\l_\Q(W)]) \in \mathcal{R} \otimes_\Z L^0(\Q(\zeta_d))$$
where $[\l_\K(W)]$ is the witt class of the $\K$-coefficient intersection form of $W$ for a field~$\K$.

Note that since $\Omega_3(B\Z_d)=H_3(\Z_d)=\Z_d$, some multiple of a closed 3-manifold over $\Z_d$ is zero in the bordism group.

\begin{lemma}[Lemma~4.4 in \cite{C10}]
  \label{lemma:p-torsion-free}
  For a $p$-structure $(\{M_{(t)}\},\phi)$ of height $h$ for a closed 3-manifold $M$,
 if $H_1(M_{(h)})$ is $p$-torsion free, then $(M_{(h)},\phi)$ is null-bordant over $\Z_d$ so that $\l(M,\T)$ is well-defined as an element in $L^0(\Q(\zeta_d))$.
\end{lemma}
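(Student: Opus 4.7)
The plan is to show that the bordism class $[M_{(h)},\phi] \in \Omega_3(B\Z_d)$ vanishes under the $p$-torsion-free hypothesis, which lets us take $r=1$ in the defining formula for $\l(M,\T)$ so that no rational denominator is needed.

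Recall that $\Omega_3(B\Z_d) \cong H_3(B\Z_d;\Z) = H_3(\Z_d;\Z) \cong \Z_d$ via the natural map sending a bordism representative $(N,\psi)$ to $\psi_\ast[N]$, where $\psi$ is regarded as a classifying map $N \to K(\Z_d,1) = B\Z_d$. So it suffices to show that the classifying map $f\colon M_{(h)} \to B\Z_d$ induced by $\phi$ satisfies $f_\ast[M_{(h)}] = 0$ in $H_3(B\Z_d)$.

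The key step is a lifting argument. Since $d$ is a power of $p$ and $H_1(M_{(h)})$ is $p$-torsion free, we can decompose $H_1(M_{(h)}) \cong \Z^n \oplus T$ with $T$ containing no $p$-torsion. Any homomorphism from $T$ to the $p$-group $\Z_d$ is zero, so $\phi$ kills $T$ and factors through $\Z^n$. Because $\Z^n$ is free abelian, the resulting surjection $\Z^n \twoheadrightarrow \Z_d$ lifts along the quotient $\Z \twoheadrightarrow \Z_d$ coordinatewise, giving a homomorphism $\tilde\phi \colon H_1(M_{(h)}) \to \Z$ with $\phi = (\text{mod }d) \circ \tilde\phi$. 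At the level of classifying spaces this realizes $f$ as the composition
\[
  M_{(h)} \xrightarrow{\tilde f} S^1 = B\Z \longrightarrow B\Z_d .
\]
Since $H_3(S^1) = 0$, we have $f_\ast[M_{(h)}] = 0$ in $H_3(B\Z_d)$, so $(M_{(h)},\phi)$ is null-bordant over $\Z_d$.

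Finally, with $r=1$ admissible, the defining formula
\[
  \l(M,\T) = [\l_{\Q(\zeta_d)}(W)] - [\l_{\Q}(W)]
\]
makes sense for any $4$-manifold $W$ bounded by $(M_{(h)},\phi)$ over $\Z_d$, and gives a well-defined element of $L^0(\Q(\zeta_d))$ (independent of $W$ by the usual cobordism/Novikov additivity argument already used to show $\l(M,\T)$ is a well-defined element of $\mathcal{R}\otimes L^0(\Q(\zeta_d))$ in \cite{C10}). There is essentially no hard step here; the only subtlety to be careful with is making sure the $p$-primary torsion distinction is the one that kills the obstruction, which is exactly where the hypothesis $p$-torsion free on $H_1(M_{(h)})$ enters.
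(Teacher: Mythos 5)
Your proof is correct and takes essentially the same approach as the paper: you use the $p$-torsion-free hypothesis to factor $\phi$ through $\Z$, then conclude nullbordance because the relevant bordism class of $B\Z$ vanishes (you phrase this via $H_3(S^1)=0$, the paper via $\Omega_3(B\Z)=0$, which are interchangeable here).
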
 
\begin{proof}
  The character $\phi\colon \pi_1(M_{(h)})\to \Z_d$ factors through $\Z$ if $H_1(M_{(h)})$ is $p$-torsion free.
   Since $(M_{(h)},\phi) \in \Im\{\Omega_3(B\Z)\to \Omega_3(B\Z_d)\}$ and $\Omega_3(B\Z)\cong H_3(\Z)=0$, $(\hM_{(h)},\phi)=0$ in $\Omega_3(B\Z_d)$.
\end{proof}

For a homology cylinder $M$, recall that there is an associated closed 3-manifold $\hM$, the closure of $M$. For each $p$-structure $\T$ for $\hM$, $\l(\hM,\T)$ is defined.
We want to define invariants parametrized by the $p$-structures for the base surface $\Sigma$ rather than those for $\hM$, and hope that the invariants are homomorphisms of (subgroups of) the homology cobordism group $\H$.
 To do this, we first investigate how to determine a $p$-structure for $\hM$ from a given $p$-structure for $\Sigma$.
 
\subsection{$p$-structures and $p$-tower maps}
  \label{sec:defining condition}
  
  A map $f \colon X \to Y$ is called a \emph{$p$-tower map} if it gives rise to a 1-1 correspondence 
$$\Phi_f \colon \{p\text{-structures for } Y\} \to \{p\text{-structures for } X\}$$
via pullback. 
For a more precise description, we recall the definition of the pullback cover.
For a covering map $p\colon \tilde{Y}\to Y$ and a map $f\colon X\to Y$, the space
$\tilde{X}=\{(x,a)\in X\times \tilde{Y}~|~f(x)=p(a)\}$ is called the \emph{pullback cover} of $X$ by $f$. The canonical projection map $\tilde{X}\to X$ is a covering map. We note that the fiber of the pullback cover is homeomorphic to that of the original cover.
For a map $X\to Y$ and a $p$-tower for $Y$, if we take the pullback covers inductively, then we get a tower of $p$-covers of $X$, but some covers may be disconnected. Hence $\Phi_f$ is well-defined only when all the pullback covers are connected and the induced character is surjective.

It is known that if $X$ and $Y$ have finite 2-skeletons, and if $f$ is 2-connected or, more generally, induces an isomorphism on $\widehat{\pi_1(-)}$, then $f$ is a $p$-tower map \cite[Lemma~3.7, Proposition~3.9]{C10}. Hence, for example, each marking $\Sigma\to M$ of any homology cylinder $M$ is a $p$-tower map. 

\begin{definition}
  \label{definition:of order}
  Let $X$ and $Y$ be CW-complexes.
  \begin{enumerate}
  	\item A $p$-structure of height $h$ for $X$ is called a \emph{$p$-structure of order $q$} if $\pi_1(X)_q \subset \pi_1(X_{(h+1)})$, viewing $\pi_1(X_{(h+1)})$ as a subgroup of $\pi_1(X)$, via the injection induced by the covering projection. 
  	\item A map $X\to Y$ is called a \emph{$p$-tower map of order $q$} if it induces a 1-1 correspondence 
  	$$\Phi_f \colon \{p\text{-structures for } Y \textrm{ of order }q\} \to \{p\text{-structures for } X \textrm{ of order }q\}$$ via pullback.
  \end{enumerate}
\end{definition}
Note that the defining condition in Definition~\ref{definition:of order} (1) is independent of the basepoints of $X$ and $X_{(h+1)}$.

We need an algebraic lemma.
\begin{lemma}
  \label{lemma:algebra}
  For a finitely generated group $G$, suppose there is a subnormal series $G_{(t)}  \triangleleft  \cdots  \triangleleft  G_{(1)}  \triangleleft  G_{(0)} = G$ whose factor groups are abelian $p$-groups. Then $G_{(t)}$ contains $G_q$ for some~$q$.
\end{lemma}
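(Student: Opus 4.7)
The plan is to deduce the lemma from the classical fact that every finite $p$-group is nilpotent. The key reduction is to produce a genuinely normal subgroup $N \triangleleft G$ with $N \subset G_{(t)}$ for which $G/N$ is a finite $p$-group; once such an $N$ is in hand, nilpotency of finite $p$-groups yields $(G/N)_q = 1$ for some $q$, and this is the same as $G_q \subset N \subset G_{(t)}$.

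A preliminary observation takes care of all the finiteness issues. By induction on $i$ I would show that every $G_{(i)}$ is finitely generated and has $p$-power index in $G$. The base case is $G_{(0)} = G$. For the inductive step, $G_{(i)}/G_{(i+1)}$ is a quotient of the finitely generated group $G_{(i)}$, hence finitely generated; being an abelian $p$-group it is therefore finite, so $G_{(i+1)}$ has finite $p$-power index in $G_{(i)}$; finite-index subgroups of finitely generated groups are finitely generated by Schreier, and the index multiplies.

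The main step is an induction on $i$ constructing $N_i \triangleleft G$ with $N_i \subset G_{(i)}$ and $G/N_i$ a finite $p$-group. Put $N_0 = G$. Assuming $N_i$ has been built, define
\[
   N_{i+1} := \bigcap_{g \in G} g(N_i \cap G_{(i+1)}) g^{-1}.
\]
Since $N_i$ is normal in $G$, this equals $N_i \cap \bigcap_g g G_{(i+1)} g^{-1}$, so $N_{i+1}$ is normal in $G$ and contained in $G_{(i+1)}$. The inclusion $N_i / (N_i \cap G_{(i+1)}) \hookrightarrow G_{(i)}/G_{(i+1)}$ shows $N_i \cap G_{(i+1)}$ has $p$-power index in $N_i$, hence also in $G$; so its $G$-normalizer has finite index in $G$, there are only finitely many conjugates, and the resulting diagonal embedding of $N_i/N_{i+1}$ into a finite product of copies of the finite abelian $p$-group $N_i/(N_i \cap G_{(i+1)})$ shows $N_i/N_{i+1}$ is a finite $p$-group. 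The extension $1 \to N_i/N_{i+1} \to G/N_{i+1} \to G/N_i \to 1$ then makes $G/N_{i+1}$ a finite $p$-group. Setting $N := N_t$ finishes the construction.

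The only real obstacle is that the hypothesis gives a merely subnormal, not normal, series: each $G_{(i)}$ is normal only in $G_{(i-1)}$, so naive quotients like $G/G_{(i)}$ are not even groups for $i \geq 2$. The normal-core construction above is exactly what cures this, and finite generation of $G$ is precisely what guarantees that each normal core stays inside the class of \emph{finite} $p$-group quotients. Everything else — Schreier's theorem, the finite-product embedding, and nilpotency of finite $p$-groups — is routine.
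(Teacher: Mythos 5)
Your proof is correct. It shares the paper's high-level strategy -- build a genuinely normal subgroup $N \triangleleft G$ with $N \subset G_{(t)}$ and $G/N$ a finite $p$-group, then invoke nilpotency of finite $p$-groups to get $(G/N)_q = 1$ and hence $G_q \subset N \subset G_{(t)}$ -- but the construction of $N$ is genuinely different. The paper uses the $\mathcal P$-mixed-coefficient commutator series (in Cha's notation, $N_{(t)} = \P^t G$), defined by $N_{(t+1)} = \Ker\bigl\{N_{(t)} \to H_1\bigl(G;\Z_{p^{a_t}}[G/N_{(t)}]\bigr)\bigr\}$ with $p^{a_t} = |G_{(t)}/G_{(t+1)}|$; normality and the containment $N_{(t)}\subset G_{(t)}$ are then obtained by a diagram chase against the given subnormal series. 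You instead take iterated normal cores, $N_{i+1} = \bigcap_{g\in G} g\bigl(N_i\cap G_{(i+1)}\bigr)g^{-1}$, and verify directly that each step preserves normality, containment in $G_{(i+1)}$, and finite $p$-group quotient via the diagonal embedding $N_i/N_{i+1}\hookrightarrow \prod_j N_i/H_j$ into conjugates of $N_i/(N_i\cap G_{(i+1)})$. The checking that makes this work -- $N_i\cap G_{(i+1)}\triangleleft N_i$ (because $N_i\subset G_{(i)}$ normalizes $G_{(i+1)}$), its conjugates are all normal in $N_i$ with isomorphic quotients, and the index stays a $p$-power by the second isomorphism theorem -- is all sound. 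Your version stays entirely inside elementary group theory and keeps the argument self-contained, whereas the paper's construction is canonical (each $N_{(t)}$ depends only on $G$ and the sequence of orders $p^{a_t}$) and connects to the mixed-coefficient commutator series machinery used elsewhere in Cha's work; both are perfectly legitimate routes to the same normal approximation.
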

Note that if $G_{(t)}$ were a normal subgroup of $G$, then the conclusion would be immediate from that any $p$-group is nilpotent. We give a proof for the above general case at the end of this section.

Let $X$ and $Y$ be CW-complexes with finitely generated fundamental groups.
By the lemma,
 $$\{ p \textrm{-structures for } X\} = \bigcup_q\{p \textrm{-structures for } X\textrm{ of order }q\} , $$
and $X\to Y$ is a $p$-tower map if and only if it is a $p$-tower map of order $q$ for all $q$.

\begin{lemma}
  \label{lemma:well-defined and 1-1}
  Let $X$ and $Y$ be connected CW-complexes whose fundamental groups are finitely generated. If $f\colon X\to Y$ induces a surjection on $\pi_1(-)/\pi_1(-)_q$, then $\Phi_f$ between $p$-structures of order $q$ is well-defined and 1-1.
\end{lemma}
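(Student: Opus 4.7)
The plan is to exploit the hypothesis in its equivalent form $f_*(\pi_1(X))\cdot \pi_1(Y)_q = \pi_1(Y)$ throughout. Given a $p$-structure $\T = (\{Y_{(t)}\}_{t=0}^{h},\phi)$ of order $q$ for $Y$, I would construct the pullback $\Phi_f(\T)$ by induction on the height of the tower and verify the required properties at each stage. Because $\pi_1(Y)_q \subset \pi_1(Y_{(h+1)}) \subset \pi_1(Y_{(t)})$ for every $t$, each subgroup $H_t := \pi_1(Y_{(t)})$ of $\pi_1(Y)$ automatically satisfies $f_*(\pi_1(X))\cdot H_t = \pi_1(Y)$, which is the standard transitivity criterion guaranteeing that the pullback cover at each stage is connected. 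The pullback of a regular cover with an abelian $p$-deck group is, when connected, again a regular cover with the same deck group, so the resulting pullback tower is a genuine $p$-tower of height $h$ over $X$. Applying the same connectedness criterion one step higher to the $\Z_d$-cover determined by $\phi$ shows that the induced character $\phi\circ g_*$ on $\pi_1(X_{(h)})$ remains surjective onto $\Z_d$. For the order $q$ condition, I would record inductively that $\pi_1(X_{(t)}) = f_*^{-1}(\pi_1(Y_{(t)}))$ inside $\pi_1(X)$, so $\pi_1(X_{(h+1)}) \supset f_*^{-1}(\pi_1(Y)_q) \supset \pi_1(X)_q$, where the last inclusion comes from the fact that $f_*$ sends $\pi_1(X)_q$ into $\pi_1(Y)_q$. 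This settles well-definedness.

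For the 1-1 part, suppose $\Phi_f(\T) = \Phi_f(\T')$ for two $p$-structures $\T,\T'$ of order $q$. Since every $\pi_1(Y_{(t)})$ contains $\pi_1(Y)_q$, it equals the full $f_*$-preimage of its image in $\pi_1(Y)/\pi_1(Y)_q$. The hypothesis provides an induced surjection $\bar f\colon \pi_1(X)/\pi_1(X)_q \twoheadrightarrow \pi_1(Y)/\pi_1(Y)_q$, and for any surjective group homomorphism the operation $H \mapsto \bar f^{-1}(H)$ is injective on subgroups, which recovers each $\pi_1(Y_{(t)})$ from $\pi_1(X_{(t)})$. The character $\phi$ factors through the quotient $\pi_1(Y_{(h)})/\pi_1(Y)_q$, onto which $g_*(\pi_1(X_{(h)}))$ surjects by the connectedness already established; hence $\phi$ too is determined by $\phi\circ g_*$.

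The argument is largely routine bookkeeping about pullback covers, and I do not anticipate a substantive obstacle. The only point requiring genuine care is recognising that connectedness at every stage of the tower reduces to the single bottom-level identity $f_*(\pi_1(X))\cdot \pi_1(Y)_q = \pi_1(Y)$, which is precisely where the order $q$ assumption is essential; without it there would be no reason for the subgroups appearing in the tower to be controlled by $\pi_1(-)/\pi_1(-)_q$, and the hypothesis on $f$ would have no purchase.
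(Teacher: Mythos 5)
Your proposal is correct and follows essentially the same approach as the paper's own proof: both verify connectedness of the pullback covers level by level, both exploit the order-$q$ hypothesis to see that every subgroup $\pi_1(Y_{(t)})$ in the tower contains $\pi_1(Y)_q$ so that the given surjection of $F/F_q$-quotients supplies the needed transitivity, and both derive injectivity of $\Phi_f$ from the fact that a surjection of the $q$-nilpotent quotients lets one recover each $\pi_1(Y_{(t)})$ from its preimage $\pi_1(X_{(t)})$. The only cosmetic difference is that you package the surjectivity hypothesis as the identity $f_*(\pi_1(X))\cdot\pi_1(Y)_q=\pi_1(Y)$ and observe directly that it propagates to every level, whereas the paper phrases the same fact as the factoring of each determining map $\pi_1(X_{(t)})\twoheadrightarrow\Gamma_{(t)}$ through $\pi_1(X_{(t)})/\pi_1(X)_q\twoheadrightarrow\pi_1(Y_{(t)})/\pi_1(Y)_q$.
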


\begin{proof}
For the well-definedness, we should check that pullback covers are connected $p$-covers.
Use induction on $t\geq 0$. 
Suppose there exists a unique $Y_{(t)}$ such that the pullback cover is $X_{(t)}$, and $X_{(t)}$ is connected.
Let $Y_{(t+1)}$ be determined by $\pi_1(Y_{(t)})\twoheadrightarrow \Gamma_{(t)}$. Since the homomorphism factors through $\pi_1(Y_{(t)})/\pi_1(Y)_q$, the composition map $\pi_1(X_{(t)})\to \Gamma_{(t)}$ also factors through $\pi_1(X_{(t)})/\pi_1(X)_q \twoheadrightarrow \pi_1(Y_{(t)})/\pi_1(Y)_q$ and is surjective. Hence the pullback cover $X_{(t+1)}$ is a connected cover of $X_{(t)}$ whose cover transformation group is $\Gamma_{(t)}$. 
The map $\pi_1(X_{(t)})\twoheadrightarrow \Gamma_{(t)}$ which determines $X_{(t)}$ factors through $\pi_1(X_{(t)})/\pi_1(X)_q \twoheadrightarrow \pi_1(Y_{(t)})/\pi_1(Y)_q$. Hence it determines $\pi_1(Y_{(t)}) \to \Gamma_{(t)}$ uniquely. Therefore $Y_{(t+1)}$ whose pullback is $X_{(t+1)}$ is unique.
\end{proof}

Applying this lemma to the marking $\hat{i} \colon \Sigma \to \hM$ of a homology cylinder $M$, $\Phi_{\hat{i}}$ is always well-defined and 1-1. We write $\Phi_M$ instead of~$\Phi_{\hat{i}}.$
Now, we investigate when a $p$-structure $\T$ for $\Sigma$ determines one for $\hM$, in other words, when $\T\in\Im\Phi_M$. If it is the case and the top cover of $\hM$ is $r$-torsion in $\Omega_3(B\Z_d)$, we can define 
$$\l_\T(M):=\l(\hM,(\Phi_M)^{-1}(\T)) \in \Z\Big[\frac{1}{r}\Big]\otimes_\Z L^0(\Q(\zeta_d)).$$
From now on, let $(\{\Sigma_{(t)}\}, \phi)$ be a $p$-structure $\T$ for $\Sigma$, and $F_{(t)}=\pi_1(\Sigma_{(t)})$.

\begin{lemma}
\label{lemma:defining condition}
Let $\T$ be a $p$-structure of height $h$ for $\Sigma$ of order $q$. For a homology cylinder $M$, $\T \in \Im\Phi_M$ if and only if $\tilde\mu_q^{\vphantom{}}(M)_k^{\vphantom{}} \in F_{(h+1)}/F_q$ for all~$k$.
\end{lemma}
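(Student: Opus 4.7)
The plan is to translate the lifting question $\T \in \Im\Phi_M$ into a condition on subgroups of $F/F_q$, using the description $\pi_1(\hM) = \pi_1(M)/\langle\langle \tilde\l_k \rangle\rangle$ (noted in Section~\ref{sec:longitude}) together with the Stallings isomorphism $(i_+)_{*q}\colon F/F_q \xrightarrow{\cong} \pi_1(M)/\pi_1(M)_q$. First I would record the key preliminary: the induced surjection $\hat{i}_{*q}\colon F/F_q \twoheadrightarrow \pi_1(\hM)/\pi_1(\hM)_q$ has kernel equal to the normal closure of the $\tilde\mu_q^{\vphantom{}}(M)_k$'s in $F/F_q$, since $(i_+)_{*q}$ identifies each $\tilde\mu_q^{\vphantom{}}(M)_k$ with the class of $\tilde\l_k$ and the quotient $\pi_1(M) \to \pi_1(\hM)$ kills exactly the normal closure of the $\tilde\l_k$.

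For the forward direction, suppose $\T = \Phi_M(\T')$ for some $p$-structure $\T' = (\{\pi_1(\hM)_{(t)}\}, \phi')$ of order $q$ for $\hM$, so $F_{(t)} = \hat{i}_*^{-1}(\pi_1(\hM)_{(t)})$ and $\pi_1(\hM)_q \subset \pi_1(\hM)_{(h+1)}$. For a lift $\tilde m_k \in F$ of $\tilde\mu_q^{\vphantom{}}(M)_k$, the defining relation $(i_+)_*(\tilde m_k) \equiv \tilde\l_k \pmod{\pi_1(M)_q}$ combined with the vanishing of $\tilde\l_k$ in $\pi_1(\hM)$ gives $\hat{i}_*(\tilde m_k) \in \pi_1(\hM)_q \subset \pi_1(\hM)_{(h+1)}$. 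Hence $\tilde m_k \in F_{(h+1)}$ and $\tilde\mu_q^{\vphantom{}}(M)_k \in F_{(h+1)}/F_q$.

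For the reverse direction, assuming $\tilde\mu_q^{\vphantom{}}(M)_k \in F_{(h+1)}/F_q$ for every $k$, I would construct a candidate $\T'$ by setting $\pi_1(\hM)_{(t)} := \hat{i}_*(F_{(t)}) \cdot \pi_1(\hM)_q$ together with the character on $\pi_1(\hM)_{(h)}/\pi_1(\hM)_{(h+1)}$ induced by $\phi$. The routine pieces---that $\pi_1(\hM)_{(t+1)}$ is normal in $\pi_1(\hM)_{(t)}$ with abelian $p$-group quotient, that the character is well-defined and surjective onto $\Z_d$, and that the resulting structure is of order $q$---all transfer directly from the corresponding properties of $\{F_{(t)}\}$ by passing through the surjection $\hat{i}_{*q}$.

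The hard step will be the pullback equality $\hat{i}_*^{-1}(\pi_1(\hM)_{(h+1)}) = F_{(h+1)}$: after reducing modulo $F_q$ via the kernel description from the preliminary, this amounts to showing that the \emph{normal closure} of the $\tilde\mu_q^{\vphantom{}}(M)_k$'s in $F/F_q$ lies in $F_{(h+1)}/F_q$, whereas the hypothesis only places the individual elements there. To upgrade elementwise containment to normal-closure containment I would exploit the subnormal $p$-tower structure of $\{F_{(t)}\}$ inside $F$ together with the internal constraints satisfied by the $\tilde\mu_q^{\vphantom{}}(M)_k$'s, in particular the identity coming from $\eta_{q+1}^{\vphantom{}}(M)([\partial_n^{\vphantom{}}]) = [\partial_n^{\vphantom{}}]$ used in the proof of Theorem~\ref{theorem:rank}, which controls how the various $\tilde\mu_q^{\vphantom{}}(M)_k$'s interact under conjugation in $F/F_q$.
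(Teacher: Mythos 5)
Your forward direction is essentially correct and close in spirit to one side of the paper's argument: the observation that $\hat{i}_*(\tilde m_k)\in\pi_1(\hM)_q\subset\pi_1(\hM)_{(h+1)}$, hence $\tilde m_k\in F_{(h+1)}$, is a clean reformulation. The difference is structural: the paper runs an induction on $t$, building $\hM_{(0)},\hM_{(1)},\dots$ one level at a time and checking at each stage that the composite $\pi_1(M_{(t)})\twoheadrightarrow\Gamma_{(t)}$ factors through $\pi_1(\hM_{(t)})$, which it deduces from the vanishing of the classes $\tilde\l(M)_k^{\vphantom{}}$ in $\Gamma_{(t)}$. You instead propose to write the whole tower for $\hM$ down at once by the formula $\pi_1(\hM)_{(t)}:=\hat{i}_*(F_{(t)})\cdot\pi_1(\hM)_q$ and then verify the pullback condition.

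The trouble sits exactly where you place it, and you have in effect pushed the content of the lemma into an unproved step. Once you set up the candidate tower for $\hM$, the pullback identity $\hat{i}_{*}^{-1}(\pi_1(\hM)_{(h+1)})=F_{(h+1)}$ becomes, after reducing modulo $F_q$, the assertion that the \emph{normal closure} of $\{\tilde\mu_q^{\vphantom{}}(M)_k^{\vphantom{}}\}$ in $F/F_q$ is contained in $F_{(h+1)}/F_q$. The hypothesis only gives individual membership, and $F_{(h+1)}/F_q$ is a subnormal, not normal, subgroup of $F/F_q$, so this is a real upgrade: in a free-nilpotent group one can easily exhibit a subnormal chain with abelian $p$-quotients and an element of the bottom term whose normal closure leaves the bottom term. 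Your closing sentence proposes to exploit the subnormal structure and the identity $\eta_{q+1}^{\vphantom{}}(M)([\partial_n^{\vphantom{}}])=[\partial_n^{\vphantom{}}]$, but no argument is given, and it is not clear that these constraints close the gap. Note also a secondary issue with the reverse direction: before one can even speak of the pullback equality, one must verify that the candidate groups $\hat{i}_*(F_{(t)})\cdot\pi_1(\hM)_q$ really give a $p$-tower whose relative quotients match the $\Gamma_{(t)}$, which requires the same normal-closure control and is not a ``routine transfer'' along $\hat{i}_{*q}^{\vphantom{}}$. By contrast, the paper's level-by-level induction recasts the existence of $\hM_{(t+1)}$ as a factorization problem for a single homomorphism $\pi_1(M_{(t)})\to\Gamma_{(t)}$ at stage $t$ and never formulates a global normal-closure statement; if you want to salvage your top-down construction, you should either prove the normal-closure containment outright or convert your argument into the paper's inductive format.
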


\begin{proof}
For $0\leq t \leq h$, suppose there is $\hM_{(t)}$ corresponding to $\Sigma_{(t)}$ (and~$M_{(t)}$).
We shall prove that $\hM_{(t+1)}$ exists whose pullback cover is $\Sigma_{(t+1)}$ if and only if $\tilde \mu_q^{\vphantom{}}(M)_k^{\vphantom{}}$ is in $F_{(t+1)}/F_q$ for all~$k$. Since $\Sigma \to M$ is a $p$-tower map, we have $F_{(t)}\to \pi_1(M_{(t)}) \twoheadrightarrow \Gamma_{(t)}$ which determines $\Sigma_{(t+1)}$ and~$M_{(t+1)}$. All $\tilde \mu_q^{\vphantom{}}(M)_k^{\vphantom{}}$ are in $F_{(t+1)}/F_q$ if and only if all $\tilde\l(M)_k^{\vphantom{}}$ are in $\pi_1(M_{(t+1)})$, i.e.\ all $\tilde\l(M)_k^{\vphantom{}}$ vanish in $\Gamma_{(t)}$. It means that $\pi_1(M_{(t)})\twoheadrightarrow \Gamma_{(t)}$ factors through $\pi_1(\hM_{(t)})$, or equivalently, $\hM_{(t+1)}$ exists. The assumption is true for $t=0$, and this completes the proof by induction. 
\end{proof}

\begin{theorem} 
  \leavevmode \Nopagebreak 
  \label{theorem:p-tower map}
  Suppose $M$ is a homology cylinder.
  \begin{enumerate} 
	\item  $\hat{i}\colon \Sigma \to \hM$ is a $p$-tower map of order $q$ if and only if $\tilde\mu_q^{\vphantom{}}(M)$ vanishes.
	\item $\hat{i}\colon \Sigma \to \hM$ is a $p$-tower map if and only if $\tilde\mu^{\vphantom{}}(M)$ vanishes.
\end{enumerate}	
\end{theorem}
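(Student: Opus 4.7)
The plan is to combine Lemma~\ref{lemma:well-defined and 1-1} and Lemma~\ref{lemma:defining condition} with one additional algebraic ingredient, namely the residual $p$-finiteness of $F/F_q$. I begin with part (1). Since the marking $i_+^{\vphantom{}}$ induces an isomorphism $F/F_q \xrightarrow{\cong} \pi_1(M)/\pi_1(M)_q$ by Stallings' theorem, and the quotient $M \to \hM$ is surjective on $\pi_1$, the composition $\hat{i}$ induces a surjection $F/F_q \twoheadrightarrow \pi_1(\hM)/\pi_1(\hM)_q$. Consequently Lemma~\ref{lemma:well-defined and 1-1} applies and makes $\Phi_M$ automatically well-defined and injective on $p$-structures of order $q$, so it remains only to determine when $\Phi_M$ is also surjective on such $p$-structures.

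By Lemma~\ref{lemma:defining condition}, surjectivity is equivalent to the statement that for every $p$-structure $\T=(\{\Sigma_{(t)}\},\phi)$ of order $q$ and height $h$ for $\Sigma$, every coordinate $\tilde\mu_q^{\vphantom{}}(M)_k^{\vphantom{}}$ lies in $F_{(h+1)}/F_q$. If $\tilde\mu_q^{\vphantom{}}(M)$ vanishes, this holds vacuously, which handles the easy direction.

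For the converse, suppose some coordinate $g := \tilde\mu_q^{\vphantom{}}(M)_k^{\vphantom{}}$ is nontrivial in $F/F_q$. The task is to construct a $p$-structure $\T$ of order $q$ for $\Sigma$ whose top subgroup $F_{(h+1)}/F_q$ omits $g$; I expect this to be the main obstacle, and it reduces to the following algebraic fact. Since $F$ is finitely generated free, the successive lower central quotients $F_i/F_{i+1}$ are finitely generated free abelian, so $F/F_q$ is a finitely generated torsion-free nilpotent group and therefore residually a finite $p$-group. I would then choose a surjection $\rho\colon F/F_q \twoheadrightarrow G$ onto a finite $p$-group with $\rho(g) \neq 1$, and pick a central series $1=G_s \triangleleft \cdots \triangleleft G_0 = G$ with cyclic factors of order $p$, which always exists in a finite $p$-group. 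Pulling this series back through the composition $F \twoheadrightarrow F/F_q \xrightarrow{\rho} G$ produces a subnormal series $F_q \subset F_{(h+1)} \subset \cdots \subset F_{(0)} = F$ with $F_{(t)}/F_{(t+1)} \cong \Z_p$ and $h=s-1$. The corresponding tower of covers of $\Sigma$, together with the cyclic character at the top, defines a $p$-structure $\T$ of order $q$ for which $g \notin F_{(h+1)}/F_q$, as required. This completes part (1).

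Part (2) is then formal. By Lemma~\ref{lemma:algebra}, every $p$-structure for $\Sigma$ is of order $q$ for some $q$, and likewise for $\hM$. Hence $\hat{i}$ is a $p$-tower map if and only if it is a $p$-tower map of order $q$ for every $q \geq 1$. Applying part (1) term-by-term, this is equivalent to the vanishing of $\tilde\mu_q^{\vphantom{}}(M)$ for all $q$, i.e., to $\tilde\mu^{\vphantom{}}(M)=0$.
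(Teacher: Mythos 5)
Your argument is correct and follows essentially the same route as the paper's: both reduce via Lemma~\ref{lemma:defining condition} to showing that a nontrivial element of $F/F_q$ escapes some top subgroup of a $p$-structure of order $q$, and both conclude from the residual $p$-finiteness of $F/F_q$ (the paper citing Gruenberg directly, you deriving it from torsion-free nilpotence). The only difference is that you spell out the construction of the $p$-tower from a finite $p$-quotient, which the paper compresses into the phrase ``$G/F_q$ ranges over all subgroups of $p$-power index.''
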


\begin{proof}
\begin{enumerate}
  \item By Lemma~\ref{lemma:defining condition}, $\Phi_M$ between $p$-structures of order $q$ is surjective if and only if $\tilde\mu_q^{\vphantom{}}(M)_k^{\vphantom{}} \in \bigcap_G G/F_q$ for all $k$, where $G/F_q$ ranges over all subgroups of $F/F_q$ such that the index $[F:G]$ is a power of $p$. Thus, it suffices to show that $\bigcap_G G/F_q$ is trivial.
From the fact that $F/F_q$ is a residually $p$-group \cite{Gr}, $\bigcap_G G/F_q$ is trivial.
\item This follows directly from (1).	\qedhere
\end{enumerate}
\end{proof}

From the proof of the theorem, we give a weakened condition for a map to be a $p$-tower map:
\begin{theorem}
  Let $X$ and $Y$ be connected CW-complexes having finitely generated fundamental groups.
  Suppose a map $f\colon X\to Y$ induces a surjection $f_{*}$ on~$\pi_1(-)/\pi_1(-)_q$.
  If $f_{*}$ is injective, then $f$ is a $p$-tower map of order~$q$.
  Moreover, if $\pi_1(X)/\pi_1(X)_q$ is a residually $p$-group, then the converse is also true.
\end{theorem}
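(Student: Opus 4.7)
The plan is to treat the two implications separately, mirroring the pattern of Theorem~\ref{theorem:p-tower map}. Throughout, write $A = \pi_1(X)/\pi_1(X)_q$ and $B = \pi_1(Y)/\pi_1(Y)_q$, and let $\bar f_{*}\colon A \to B$ denote the map induced by $f_{*}$, which is surjective by hypothesis. Assume first that $\bar f_{*}$ is also injective, hence an isomorphism. Lemma~\ref{lemma:well-defined and 1-1} already ensures $\Phi_f$ is well-defined and injective on $p$-structures of order $q$, so only surjectivity of $\Phi_f$ remains. Given any order-$q$ $p$-structure on $X$, the order-$q$ condition identifies its underlying tower with a chain of subgroups $A = A_0 \triangleright A_1 \triangleright \cdots \triangleright A_h$ having finite abelian $p$-group successive quotients, together with a surjective character $\phi\colon A_h \twoheadrightarrow \Z_d$. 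I would transport this data to $B$ via the isomorphism by setting $B_i = \bar f_{*}(A_i)$ and $\phi' = \phi \circ (\bar f_{*}|_{A_h})^{-1}$, producing an order-$q$ $p$-structure on $Y$ whose pullback, by the identity $\bar f_{*}^{-1}(B_i) = A_i$, is exactly the original structure on $X$.

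For the converse, I assume $f$ is a $p$-tower map of order $q$ and that $A$ is residually $p$, and argue by contradiction. If $\Ker \bar f_{*}$ is nontrivial, pick $1 \neq x \in \Ker \bar f_{*}$; the residually $p$ hypothesis supplies a finite $p$-group quotient $A \twoheadrightarrow A/N$ separating $x$, and I may take $N$ normal in $A$ (by passing to the normal core, which retains $p$-power index because $A$ is finitely generated). Since $A/N$ is a finite $p$-group, it admits a composition series with each factor isomorphic to $\Z_p$; pulling back to $A$ yields a chain $A = N_0 \triangleright N_1 \triangleright \cdots \triangleright N_m = N$ with $N_i/N_{i+1} \cong \Z_p$. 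Let $j$ be the largest index for which $x \in N_j$; since $x \in N_0$ but $x \notin N_m$, we have $0 \le j < m$ and $x \notin N_{j+1}$. Then the truncated tower $N_0 \triangleright N_1 \triangleright \cdots \triangleright N_j$ together with the projection character $\phi\colon N_j \twoheadrightarrow N_j/N_{j+1} \cong \Z_p$ (taking $d = p$) determines an order-$q$ $p$-structure on $X$ whose top cover corresponds to $N_{j+1}$. If it lay in $\Im \Phi_f$, then $N_{j+1}$ would have to be of the form $\bar f_{*}^{-1}(M)$ for some $M \subset B$, forcing $N_{j+1} \supset \Ker \bar f_{*} \ni x$, which contradicts $x \notin N_{j+1}$.

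The main obstacle I anticipate is producing a $\Z_d$-valued character in the converse direction: residually $p$ supplies a separating finite $p$-group quotient but not directly a cyclic-$p$-group one, and the subgroup $N$ itself may admit no useful surjection to $\Z_d$ for the prescribed $d$. I plan to sidestep $N$ and instead use the final step $N_j \twoheadrightarrow N_j/N_{j+1} \cong \Z_p$ of the composition series as the character, taking $d = p$. With this choice, the order-$q$ condition and the required surjectivity of the character at each level of the tower follow automatically, since the whole construction takes place inside $A$.
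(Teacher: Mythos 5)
Your proof is correct and follows the route the paper indicates by pointing back to the proof of Theorem~\ref{theorem:p-tower map}: Lemma~\ref{lemma:well-defined and 1-1} disposes of well-definedness and injectivity of $\Phi_f$, and the residual-$p$ hypothesis on $\pi_1(X)/\pi_1(X)_q$ is precisely what forces $\Ker f_*$ to be trivial once $\Phi_f$ is surjective. Your two explicit constructions---transporting an order-$q$ chain across the isomorphism $\bar f_*$ for the forward implication, and refining a separating finite $p$-group quotient into a $\Z_p$-valued $p$-structure of order $q$ lying outside $\Im\Phi_f$ for the converse---spell out exactly what the paper's remark ``from the proof of the theorem'' amounts to.
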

We remark that this proposition can be applied to meridian maps of (string) links. This gives the affirmative answer to the question in \cite[Remark~6.4]{C10}: ``if $L$ is a link with vanishing $\bar\mu$-invariants, then is a meridian map into the surgery manifold of the link a $p$-tower map?''. Moreover, the converse is also true.

\subsection{Homology cobordism invariants}
\label{sec:homology cobordism invariants}
We study the homology cobordism invariance of~$\l_\T$.
\begin{proposition}
  \label{proposition:homology cobordism invariant}
   	Suppose homology cylinders $M$ and $M'$ are homology cobordant. Then $\l_\T(M)$ is defined as an element in $\Z[\frac{1}{r}]\otimes L^0(\Q(\zeta_d))$ if and only if $\l_\T(M')$ is defined as an element in $\Z[\frac{1}{r}]\otimes L^0(\Q(\zeta_d))$. In that case, $\l_\T(M)=\l_\T(M')$ in $\Z[\frac{1}{r}]\otimes L^0(\Q(\zeta_d))$.
\end{proposition}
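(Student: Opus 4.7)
The plan is to reduce the proposition to the already-established homology cobordism invariance of $\l(-,-)$ for closed 3-manifolds from \cite{C10} (recalled in the introduction). First, the ``definability'' half is immediate: since $\tilde\mu_q^{\vphantom{}}$ is a homology cobordism invariant, $\tilde\mu_q^{\vphantom{}}(M) = \tilde\mu_q^{\vphantom{}}(M')$ for every $q$, and by Lemma~\ref{lemma:defining condition} the condition $\T \in \Im \Phi_M$ depends only on these values. Hence $\T \in \Im \Phi_M$ if and only if $\T \in \Im \Phi_{M'}$, so $\l_\T(M)$ is defined if and only if $\l_\T(M')$ is. Assume henceforth both are defined.

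For the equality, I would build a 4-dimensional cobordism between the closures by identifying inside $W$ the two copies of $\Sigma$ on its boundary. Explicitly, set
$$ \hW \;:=\; W \big/ \big(\, i_+^{\vphantom{}}(z) \sim i_-^{\vphantom{}}(z) \text{ for all } z \in \Sigma \,\big), $$
which is well-defined on $W$ since $i_\pm^{\vphantom{}}(z) = j_\pm^{\vphantom{}}(z)$ inside $\partial W$. A local-model check (two 4-dimensional half-spaces glued along a common $2$-submanifold of their boundaries form a smooth $4$-space) shows that $\hW$ is a smooth 4-manifold with $\partial \hW = \hM \cup (-\widehat{M'})$, and there is a natural embedding $\hat\imath\colon\Sigma\hookrightarrow\hW$ extending all four markings.

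Next, I would verify that $\hW$ is an integral homology cobordism between $\hM$ and $\widehat{M'}$. Observe that, up to homotopy equivalence, $\hM$ is the homotopy pushout of the fold map $\Sigma\sqcup\Sigma\to\Sigma$ along $i_+^{\vphantom{}}\sqcup i_-^{\vphantom{}}\colon \Sigma\sqcup\Sigma\to M$, and likewise $\hW$ is the homotopy pushout of the fold map along $\Sigma\sqcup\Sigma\to W$. Since each of $\Sigma\to M$, $M\to W$, and $\Sigma\to W$ is an isomorphism on $H_*(-;\Z)$ (by the homology cylinder and homology cobordism assumptions), a comparison of the associated Mayer-Vietoris sequences gives $H_*(\hM)\xrightarrow{\cong} H_*(\hW)$; by symmetry the same holds for $H_*(\widehat{M'})\to H_*(\hW)$.

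Finally, the pullback $p$-structures $(\Phi_M)^{-1}(\T)$ on $\hM$ and $(\Phi_{M'})^{-1}(\T)$ on $\widehat{M'}$ both arise by pulling back $\T$ along the common marking $\hat\imath\colon \Sigma\to\hW$, so they are the restrictions to the boundary components of a single $p$-structure on $\hW$. Applying the homology cobordism invariance of $\l$ for closed 3-manifolds to $\hW$ then yields $\l_\T(M)=\l_\T(M')$ in $\Z[\frac{1}{r}]\otimes_\Z L^0(\Q(\zeta_d))$. The main obstacle I anticipate is the second step: rigorously checking that the quotient $\hW$ is a smooth 4-manifold and that the Mayer-Vietoris comparison goes through, and relatedly arranging that the same denominator $r$ works on both sides, equivalently that $r\cdot(\hM_{(h)},\phi)$ and $r\cdot(\widehat{M'}_{(h)},\phi)$ represent the same class in $\Omega_3(B\Z_d)$, witnessed by the top cover of $\hW$.
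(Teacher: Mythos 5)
Your overall strategy coincides with the paper's: build a 4-dimensional $H_*$-cobordism $\hW$ between the closures $\hM$ and $\hM'$, arrange a common marking $\Sigma\to\hW$ compatible with both $\hat\imath$ and $\hat\imath'$, and then invoke Cha's homology cobordism invariance for closed 3-manifolds from \cite{C10}. The definability half via the homology cobordism invariance of $\tilde\mu_q^{\vphantom{}}$ together with Lemma~\ref{lemma:defining condition} is a nice alternative to the paper's argument (which instead notes the inclusions $\hM,\hM'\hookrightarrow\hW$ are $2$-connected, hence $p$-tower maps, and transports a $p$-structure through $\hW$), and the Mayer--Vietoris bookkeeping via the homotopy pushout is essentially the paper's comparison of the pairs $(M,E\times0)$ and $(W,E\times I)$.

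The genuine error is in the construction of $\hW$. You set $\hW := W/\bigl(i_+^{\vphantom{}}(z)\sim i_-^{\vphantom{}}(z)\bigr)$ and assert, via the local-model claim ``two 4-dimensional half-spaces glued along a common $2$-submanifold of their boundaries form a smooth $4$-space,'' that the quotient is a manifold. That claim is false: $i_\pm^{\vphantom{}}(\Sigma)$ are codimension-$2$ in $W$ (codimension-$1$ in $\partial W$), so identifying them point-by-point produces a non-manifold singular stratum along the surface --- in the local model $\mathbb{R}^2\times\{(z,w):w\ge 0\}$ glued to another such copy only along $\mathbb{R}^2\times\{(0,0)\}$, the transverse slice is a wedge of two half-planes at the origin, which is not a $2$-manifold. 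The correct operation, used in the paper's Lemma~\ref{lemma:V}, is to identify \emph{tubular} (i.e.\ codimension-$0$ in $\partial W$) neighborhoods $i_+^{\vphantom{}}(\Sigma)\times(-\epsilon,\epsilon)$ and $i_-^{\vphantom{}}(\Sigma)\times(-\epsilon,\epsilon)$ in $\partial W$; equivalently, attach $E\times I$ to a collar of $\partial M$ in $\partial W$. That does give a $4$-manifold $\hW$ with $\partial\hW = \hM\cup(-\hM')$, the markings into $\hM$ and $\hM'$ become homotopic in $\hW$, and the remainder of your argument (Mayer--Vietoris and the application of \cite[Theorem~3.1]{C10}) then goes through as you outlined, including the ``same denominator $r$'' point you flagged, since the top covers of $\hW$ yield a bordism between $(\hM_{(h)},\phi)$ and $(\hM'_{(h)},\phi)$ over $\Z_d$.
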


Theorem~\ref{theorem:p-tower map} and Proposition~\ref{proposition:homology cobordism invariant} provide 
invariants of homology cobordism group of homology cylinders. More precisely, we have
$$\l_\T \colon \H(q) \to \Z\Big[\frac{1}{d}\Big]\otimes_\Z L^0 (\Q(\zeta_d))$$
for each $p$-structure $\T$ for $\Sigma$ of order $q$.

To prove the proposition, we construct a homology cobordism between $\hM$ and~$\hM'$:

\begin{lemma}
  \label{lemma:V}
	Suppose homology cylinders $(M,i_+^{\vphantom{}},i_-^{\vphantom{}})$ and $(M,i_+',i_-')$ are homology cobordant. Then there is a homology cobordism $\hW$ between $\hM$ and $\hM'$ such that $\Sigma \xrightarrow{\hat{i}}\hM \to \hW$ and $\Sigma \xrightarrow{\hat{i'}}\hM' \to \hW$ are homotopic.
\end{lemma}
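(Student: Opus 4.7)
The plan is to mimic the closure operation $M\mapsto\hM$ one dimension higher, constructing $\hW$ by attaching a suitable $4$-dimensional thickening to $W$. The crucial preliminary observation is that $\hM$ admits a natural $3$-dimensional decomposition
\[
\hM \;\cong\; M \cup_{\phi} (\Sigma \times I),
\]
where $\phi$ is the natural identification of $\partial(\Sigma\times I)$ with $\partial M$ (after smoothing corners): $\Sigma\times\{0\}\mapsto i_+(\Sigma)$ via $i_+$, $\Sigma\times\{1\}\mapsto i_-(\Sigma)$ via $i_-$, and the annular side $\partial\Sigma\times I$ onto a bicollar of $\partial\Sigma$ in the closed surface~$\partial M$. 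Geometrically, this decomposition simply encodes a regular neighborhood of the embedded surface $\hat{i}(\Sigma)\subset\hM$, whose complement in $\hM$ is homeomorphic to~$M$.

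Taking this one dimension higher, I would define
\[
\hW := W \cup_{\phi\times\id_I}\bigl((\Sigma\times I)\times I\bigr),
\]
where the $4$-manifold $(\Sigma\times I)\times I$ is attached along its face $\partial(\Sigma\times I)\times I\cong\partial M\times I$, glued to a bicollar neighborhood of the separating surface $\partial M\subset\partial W$ (recall $\partial W$ is obtained from $M\sqcup(-M')$ by gluing along $\partial M=\partial M'$, so $\partial M$ separates $\partial W$ into the pieces $M$ and $-M'$). The boundary identification $\partial\hW=\hM\sqcup(-\hM')$ follows directly: removing the open bicollar $\partial M\times(0,1)$ from $\partial W$ disconnects it into two pieces homeomorphic to $M$ and $-M'$ respectively, and each is then joined through the attached $4$-dimensional bridge to one of the two free $3$-dimensional boundary pieces $(\Sigma\times I)\times\{0\}$ and $(\Sigma\times I)\times\{1\}$, recovering $\hM$ and $-\hM'$ via the decomposition of the first paragraph.

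To check that $\hW$ is a homology cobordism, I would compare the Mayer--Vietoris sequences of $\hM=M\cup(\Sigma\times I)$ and $\hW=W\cup((\Sigma\times I)\times I)$ via the inclusion $\hM\hookrightarrow\hW$. The induced vertical maps on the pieces are $M\hookrightarrow W$ (an isomorphism on $H_*$ since $W$ is a homology cobordism), $\Sigma\times I\hookrightarrow(\Sigma\times I)\times I$ (a homotopy equivalence), and $\partial M\hookrightarrow\partial M\times I$ (a homotopy equivalence); the five lemma then yields $H_*(\hM)\xrightarrow{\cong} H_*(\hW)$, and symmetrically for $\hM'$. Finally, both compositions $\Sigma\xrightarrow{\hat{i}}\hM\hookrightarrow\hW$ and $\Sigma\xrightarrow{\hat{i'}}\hM'\hookrightarrow\hW$ land in the attached piece $(\Sigma\times I)\times I\subset\hW$, landing on the parallel slices $\Sigma\times\{1/2\}\times\{0\}$ and $\Sigma\times\{1/2\}\times\{1\}$ respectively; the straight-line homotopy $(z,s)\mapsto(z,1/2,s)$ joins them inside the bridge.

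The main technical subtlety lies in the closure decomposition: rigorously identifying $\hM$ with $M\cup_\phi(\Sigma\times I)$ requires handling the matching of the annular region $\partial\Sigma\times I\subset\partial(\Sigma\times I)$ with a bicollar of the curve $\partial\Sigma\subset\partial M$, and the corresponding corner smoothing in the $4$-dimensional construction must be done consistently so that $\hW$ is genuinely a smooth (or topological) $4$-manifold. Once this setup is in place, the remaining algebraic-topological verifications are routine.
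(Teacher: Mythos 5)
Your proof is correct and follows essentially the same route as the paper's. The paper constructs $\hW$ by attaching $E\times I$ (where $E$ is the trivial homology cylinder) to a tubular neighborhood of $\partial M$ in $\partial W$, which is your $\hW=W\cup_{\phi\times\id_I}\bigl((\Sigma\times I)\times I\bigr)$ modulo the cosmetic difference that $E$ is the quotient of $\Sigma\times I$ collapsing $\partial\Sigma\times I$ (so $E\cong\Sigma\times I$ after smoothing, and the two attaching prescriptions agree); and the paper verifies homology equivalence by the same Mayer--Vietoris/five-lemma comparison you use and derives the homotopy conclusion ``from the construction,'' of which your explicit straight-line homotopy $(z,s)\mapsto(z,1/2,s)$ inside the bridge is a fleshed-out version.
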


\begin{proof}
Let $W$ be a homology cobordism between $M$ and~$M'$. We construct a 4-manifold $\hW$ from $W$ by identifying tubular neighborhoods of $i_+^{\vphantom{}}(\Sigma)$ and $i_-^{\vphantom{}}(\Sigma)$ in the boundary of~$W$. Then the boundary of $\hW$ is $\hM \cup \hM'$. We remind the reader that $E$ is the trivial homology cylinder for the next. Equivalently, $\hW$ is obtained by attaching $E\times I$ to the tubular neighborhood of $\partial M $ in the boundary of $W$ such that $\partial E \times  I$ = (tubular neighborhood of $\partial M$), $\partial E \times 0 \subset M$ and $\partial E\times 1\subset M'$. We need to check that the inclusion maps $\hM \hookrightarrow \hW$ and $\hM' \hookrightarrow \hW$ are homology equivalences. By comparing the Mayer-Vietoris sequences of $(M,E\times 0)$ and $(W,E \times I)$, we obtain an isomorphism $H_*(\hM) \to H_*(\hW)$ using the five lemma. Hence $\hM\hookrightarrow \hW$ is a homology equivalence, and similarly for $\hM'\hookrightarrow \hW$.
The homotopy conclusion in the statement follows from the construction of~$\hW$.
\end{proof}

\begin{proof}[Proof of Proposition~\ref{proposition:homology cobordism invariant}]
  Suppose $\l_\T(M)\in \Z[\frac{1}{r}]\otimes L^0(\Q(\zeta_d))$ is defined, i.e.\ there is a $p$-structure $\S$ for~$\hM$ such that $\Phi_M(\S) = \T$ and the top cover of $\hM$ is $r$-torsion in~$\Omega_3(B\Z_d)$. Since $\hM \hookrightarrow \hW$ and $\hM ' \hookrightarrow \hW$ are 2-connected, they are $p$-tower maps and so there is a $p$-structure $\S'$ for $\hM'$ corresponding to~$\S$ under the inclusion-induced bijections. From the homotopy conclusion in Lemma~\ref{lemma:V}, it follows that $\Phi_{M'}(\S') = \T$. Hence $\l_\T(M')$ is also defined.
By applying \cite[Theorem 3.1]{C10}, we obtain $\l_\T(M) = \l_\T(M')\in \Z[\frac{1}{r}]\otimes L^0(\Q(\zeta_d))$.
\end{proof}
 
   For later use, we recall two key ingredients of the proof of \cite[Theorem 3.1]{C10}:
\begin{enumerate}
	\item[(K1)] Suppose $X\supset Y$ are CW complexes with finite $n$-skeletons. If $H_i(X,Y;\zp) = 0$ for $i \leq n$, then $H_i(\tilde X, \tilde Y; \zp) = 0$ for $i \leq n$ where $\tilde X$ is a $p$-cover of $X$ and $\tilde Y$ is the pullback cover of $Y$ by the inclusion $Y \hookrightarrow X$ \cite[Lemma 3.3]{C10}.
	\item[(K2)] If a 4-manifold $X$ satisfies
	$$\Im\{H_2(\tilde X;\Q) \to H_2(\tilde X,(\text{a subspace of) } \partial\tilde X;\Q)\} = 0$$
for a $\Z_d$-cover $\tilde X$ of $X$, then the Witt class of the $\Q(\zeta_d)$-coefficient intersection form $[\l_{\Q(\zeta_d)}(X)] $ vanishes.
\end{enumerate}
As in the proof of \cite[Theorem~2.4]{C09}, to prove (K2), we use the fact that $\Q(\zeta_d)$ is flat over $\Q[\Z_d]$; by the universal coefficient theorem,
\begin{align*}
	H_2(X;\Q(\zeta_d)) \; & = \; H_2(X;\Q[\Z_d]) \otimes_{\Q[\Z_d]}\Q(\zeta_d) \\
	& = \; H_2(\tilde{X} ; \Q) \otimes_{\Q[\Z_d]}\Q(\zeta_d).
\end{align*}
Similarly for a subspace of~$\partial\tilde{X}$.
Thus the condition implies
$$\Im\{H_2(X;\Q(\zeta_d)) \to H_2(X,\partial X; \Q(\zeta_d))\} = 0,$$
and $[\l_{\Q(\zeta_d)}(X)] = 0.$

\subsection{Homomorphism of homology cobordism groups}
  \label{sec:additivity condition}
In this subsection, we investigate the additivity of $\l_\T$.
For $M, N$ in $\H(q)$ and a $p$-structure $\T$ for $\Sigma$ of order $q$,
we need to check whether $\l_\T(M)+ \l_\T(N)  - \l_\T (M\cdot N)=0$. 
We will construct a cobordism $V$ between $\hM \cup \hN$ and $\widehat{M \cdot N}$ such that if $M$ and $N$ are in $\H(q)$, then the inclusions from $\hM$, $\hN$, and $\hMN$ into $V$ are $p$-tower maps of order~$q$. After then, we will investigate when the difference of the Witt classes of two intersection forms, twisted and untwisted, of the top cover of $V$ vanishes. 

	For our purpose, we can use a``standard'' cobordism $V$ as in~\cite{C09, CHH}. The cobordism $V$ is obtained from $\big(\widehat{(M,i_+^{\vphantom{}},i_-^{\vphantom{}})} \cup \widehat{(N,j_+^{\vphantom{}},j_-^{\vphantom{}})}\big) \times I$ by identifying product neighborhoods of $\hat{i}(\Sigma)$ and $\hat{j}(\Sigma)$ in $(\hM \cup \hN) \times 1$. Since the identification can be thought of as attaching $\Sigma \times I \times I$ along the product neighborhoods, $V$ can be obtained by attaching one 1-handle which connects $\hM \times I$ to $\hN \times I$ and $(2g+n-1)$ 2-handles along simple closed curves on $\hat{i}(\Sigma)\#_b ~\hat{j}(\Sigma)$ in $\hM \# \hN$ corresponding to $\hat{i}_*(z)\cdot \hat{j}_*(z^{-1})$ (up to homotopy) where $z$ ranges over disjoint simple closed curves representing $x_i^{\vphantom{}}$, $m_j^{\vphantom{}}$ and $l_j^{\vphantom{}}$ in Figure~\ref{figure:Sigma}.
($\#$ denotes connected sum and $\#_b$ denotes boundary connected sum.)
Then, by construction, we have the following property, which we state as a lemma:
\begin{lemma}
  \label{lemma:cobordism}
  For any homology cylinders $M$ and $N$ over $\Sigma$, there is a cobordism $V$ between $\hM \cup \hN$ and $\hMN$ such that the following diagram is commutative up to homotopy rel $*$. 
     $$\begin{diagram} 
  \dgARROWLENGTH=.6\dgARROWLENGTH
    \node{} \node{\hM} \arrow{se,J} \\
    \node{\Sigma} \arrow{ne,l}{\hat{i}} \arrow{se,r}{\begin{substack}{i_+^{\vphantom{}} = j_-^{\vphantom{}}
    }\end{substack}}\arrow{e,t}{\hat{j}} \node{\hN} \arrow{e,J} \node{V} \\
    \node{} \node{\hMN} \arrow{ne,J}
  \end{diagram} $$
\end{lemma}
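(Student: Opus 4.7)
The plan is to construct $V$ explicitly via the ``standard cobordism'' recipe sketched after the lemma statement, and then to verify both the boundary identification and the homotopy commutativity. Concretely, I start with the trivial product cobordism $(\hM\sqcup\hN)\times I$, choose product bicollar neighborhoods $U_M\cong \hat{i}(\Sigma)\times[-\epsilon,\epsilon]$ in $\hM$ and $U_N\cong \hat{j}(\Sigma)\times[-\epsilon,\epsilon]$ in $\hN$, and attach a slab $\Sigma\times I\times I$ to the top with $\Sigma\times\{0\}\times I$ glued to $U_M\times\{1\}$ via $\hat i\times\id$ and $\Sigma\times\{1\}\times I$ glued to $U_N\times\{1\}$ via $\hat j\times\id$. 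The resulting 4-manifold is $V$. This is the same $V$ admitting the handle decomposition described before the lemma: one 1-handle joining basepoints on $\hat i(\Sigma)$ and $\hat j(\Sigma)$, followed by $2g+n-1$ 2-handles along disjoint simple closed curves in $\hat i(\Sigma)\#_b\hat j(\Sigma)$ representing $\hat i_*(z)\cdot \hat j_*(z)^{-1}$ for each generator $z\in\{x_i,m_j,l_j\}$ of $\pi_1(\Sigma)$, with surface framings.

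Next I identify $\partial V$. The bottom $(\hM\sqcup\hN)\times\{0\}$ is unchanged. For the top, the ``$\Sigma\times I\times I$ attachment'' picture gives
\[
\bigl((\hM\setminus\nu(\hat i(\Sigma)))\sqcup(\hN\setminus\nu(\hat j(\Sigma)))\bigr)\;\cup_\partial\; (\Sigma\times I\times\{1\}).
\]
Cutting $\hM$ along $\hat i(\Sigma)$ recovers $(M,i_+,i_-)$, cutting $\hN$ along $\hat j(\Sigma)$ recovers $(N,j_+,j_-)$, and the two edges $\Sigma\times\{0,1\}\times\{1\}$ of the bridging slab implement the two identifications defining $\widehat{M\cdot N}$: one edge glues $i_-$ to $j_+$ (the product identification $i_-\circ(j_+)^{-1}$ that defines $M\cdot N$), the other glues $i_+$ to $j_-$ (the closure). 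Hence the top boundary is diffeomorphic to $\widehat{M\cdot N}$.

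For the homotopy commutativity, the bridging slab $\Sigma\times I\times\{1\}\subset V$ supplies the required homotopies. Fix the basepoint $*\in\partial_n\subset\Sigma$ and use the arc $\{*\}\times I\times\{1\}$ as a basepoint track in $V$. The family $s\mapsto\bigl(z\mapsto(z,s,1)\bigr)$, $s\in I$, is a homotopy rel $*$ from $\hat i\colon\Sigma\hookrightarrow\hM\hookrightarrow V$ at $s=0$ to $\hat j\colon\Sigma\hookrightarrow\hN\hookrightarrow V$ at $s=1$, while the slice at any intermediate $s$ factors as $\Sigma\to\widehat{M\cdot N}\hookrightarrow V$ via the marking of the closure, yielding homotopies between all three maps.

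The main bookkeeping obstacle is verifying that the two ``side'' identifications along $\Sigma\times\{0,1\}\times\{1\}$ produce precisely the $M\cdot N$ gluing and the closure identification, rather than some permutation or reversal. This reduces to a careful check of the orientations of the bicollars and of the conventions for $\hat i$ and $\hat j$ on the two sides of $\hat i(\Sigma)\subset\hM$ and $\hat j(\Sigma)\subset\hN$. Once this is sorted out, both the top boundary identification and the homotopy rel $*$ follow formally from the product structure of the attached piece.
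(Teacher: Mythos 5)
Your proof carries out the ``standard cobordism'' construction described in the paper immediately before the lemma, verifying both the top boundary identification and the homotopy commutativity through the bridging slab; this matches the paper's approach, which simply asserts the lemma holds ``by construction'' without writing out the details you supply. One small cosmetic slip: after attaching $\Sigma\times I\times I$ along $\Sigma\times\{0,1\}\times I$, both faces $\Sigma\times I\times\{0\}$ and $\Sigma\times I\times\{1\}$ survive into the top boundary (one effecting the $i_-\!\sim\! j_+$ gluing, the other the $i_+\!\sim\! j_-$ closure identification), not only the $\{1\}$ face as you wrote, but this does not affect the substance of the argument.
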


Note that $\hMN$ and $\widehat{N\cdot M}$ are homeomorphic, and the marking for $\hMN$ and the marking for $\widehat{N\cdot M}$ induce the same homomorphism $\pi_1(\Sigma)\to \pi_1(\hMN)\cong \pi_1(\widehat{N\cdot M})$. 
  
\begin{lemma}
   If $M, N \in \H(q)$, then all maps in the above diagram induce isomorphisms on~$\pi_1(-)/\pi_1(-)_q$. Consequently, all the maps are $p$-tower maps of order~$q$.
\end{lemma}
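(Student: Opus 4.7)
The plan is to reduce everything to showing that the common composite $\Sigma \to V$ induces an isomorphism on $\pi_1(-)/\pi_1(-)_q$. By Lemma~\ref{lemma:cobordism}, the three compositions $\Sigma \to \hM \hookrightarrow V$, $\Sigma \to \hN \hookrightarrow V$, and $\Sigma \to \hMN \hookrightarrow V$ agree up to basepoint-preserving homotopy, so they induce the same map on $\pi_1/\pi_1_q$. Since $M, N \in \H(q)$ and $\H(q)$ is a subgroup (Corollary~\ref{cor:subgroup}(2)), we also have $M \cdot N \in \H(q)$, and hence by Theorem~\ref{theorem:p-tower map}(1) each of the markings $\hat{i}$, $\hat{j}$, and the marking $\Sigma \to \hMN$ already induces an isomorphism on $\pi_1/\pi_1_q$. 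Once $\Sigma \to V$ is known to induce such an isomorphism, each of $\hM \hookrightarrow V$, $\hN \hookrightarrow V$, $\hMN \hookrightarrow V$ factors an isomorphism through an isomorphism on $\pi_1/\pi_1_q$ and therefore is itself an isomorphism. The ``Consequently'' clause is then immediate from the converse criterion stated right after Theorem~\ref{theorem:p-tower map}, which promotes an isomorphism on $\pi_1/\pi_1_q$ to a $p$-tower map of order $q$.

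To compute $\pi_1(V)$, I would apply Seifert--van Kampen to the handle decomposition recalled just before Lemma~\ref{lemma:cobordism}: $V$ is obtained from $\hM \sqcup \hN$ by attaching a single 1-handle joining the two components and $(2g+n-1)$ 2-handles, one for each free generator $z \in \{x_i, m_j, l_j\}$ of $F = \pi_1(\Sigma)$, attached along a loop representing $\hat{i}_*(z)\,\hat{j}_*(z)^{-1}$. Since these relations on a free generating set extend to the identity $\hat{i}_*(z) = \hat{j}_*(z)$ for every $z \in F$, Seifert--van Kampen gives
\[
\pi_1(V) \;=\; \pi_1(\hM) \,*_F\, \pi_1(\hN),
\]
the amalgamated free product via $\hat{i}_*$ and $\hat{j}_*$.

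To finish, I would exploit the universal property of this amalgamated product at the nilpotent level. Surjectivity of $F/F_q \to \pi_1(V)/\pi_1(V)_q$ is immediate: the target is nilpotent of class $\leq q{-}1$, so both $\pi_1(\hM) \to \pi_1(V)/\pi_1(V)_q$ and $\pi_1(\hN) \to \pi_1(V)/\pi_1(V)_q$ factor through the respective nilpotent quotients, each of which is already generated by the image of $F$ by hypothesis. For injectivity, I would compose the quotients $\pi_1(\hM) \to \pi_1(\hM)/\pi_1(\hM)_q$ and $\pi_1(\hN) \to \pi_1(\hN)/\pi_1(\hN)_q$ with the inverses of the isomorphisms induced by $\hat{i}_*$ and $\hat{j}_*$ to produce compatible maps $\pi_1(\hM), \pi_1(\hN) \to F/F_q$ agreeing on $F$; the universal property then yields a homomorphism $\pi_1(V) \to F/F_q$ retracting $F \to \pi_1(V) \to F/F_q$ to the quotient map, completing the argument. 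The main subtle point I anticipate is the Seifert--van Kampen identification $\pi_1(V) = \pi_1(\hM) *_F \pi_1(\hN)$: one must verify that the $(2g + n - 1)$ two-handle attaching curves correspond precisely to a free generating set of $F$, so that the imposed relations realize amalgamation over the whole group $F$ rather than over a proper subgroup.
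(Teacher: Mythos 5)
Your proposal is correct, but it proceeds by a genuinely different route than the paper. The paper's proof compares $\pi_1(\hMN)$ and $\pi_1(V)$ directly: both are quotients of the free product $\pi_1(M)*\pi_1(N)$, with $\pi_1(\hMN)$ killing $(i_-)_*(z)(j_+)_*(z)^{-1}$ and $\tilde\l(M)_k\tilde\l(N)_k$, and $\pi_1(V)$ additionally killing $\tilde\l(M)_k$ and $\tilde\l(N)_k$ individually; since $M,N\in\H(q)$ forces these extra relators into $(\pi_1(M)*\pi_1(N))_q$, the surjection $\pi_1(\hMN)\twoheadrightarrow\pi_1(V)$ is automatically an isomorphism on $q$th nilpotent quotients, and the remaining maps follow since the marking $\Sigma\to\hMN$ is already an iso. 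You instead identify $\pi_1(V)$ as the pushout $\pi_1(\hM)*_F\pi_1(\hN)$ (along $\hat{i}_*,\hat{j}_*$) via the 1-handle/2-handle description, then prove $F/F_q\to\pi_1(V)/\pi_1(V)_q$ is a split surjection by feeding the compatible maps $\pi_1(\hM),\pi_1(\hN)\to F/F_q$ through the pushout's universal property; your argument then recovers the other maps by composing with the markings, which are isomorphisms because $M$, $N$, and $M\cdot N$ all lie in $\H(q)$. Both are sound. The paper's version is terser and uses less machinery (a single lower-central-series containment instead of the universal property), whereas yours is more self-contained about why $\Sigma\to V$ is an iso and does not rely on first isolating $\hMN\hookrightarrow V$; one small caveat is that the notation $*_F$ should be read as a plain pushout since $\hat{i}_*\colon F\to\pi_1(\hM)$ need not be injective, but your argument only uses the universal property, so nothing breaks.
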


\begin{proof}
It suffices to prove that the embedding $\hMN \hookrightarrow V$ induces an isomorphism on $\pi_1(-)/\pi_1(-)_q$.
Both $\pi_1(\hMN)$ and $\pi_1(V)$ are quotient groups of $\pi_1(M)\ast\pi_1(N)$.
The group $\pi_1(\hMN)$ is the quotient by the normal subgroup generated by $(i_-^{\vphantom{}})_*(z) (j_+^{\vphantom{}})_*(z^{-1})$ for $z\in F$ and $\tilde\l(M)_k^{\vphantom{}} \tilde\l(N)_k^{\vphantom{}}$ for all $k$.
The group $\pi_1(V)$ is the quotient by the normal subgroup generated by $(i_-^{\vphantom{}})_*(z) (j_+^{\vphantom{}})_*(z^{-1})$ for $z\in F$ and $\tilde\l(M)_k^{\vphantom{}}$, $\tilde\l(N)_k^{\vphantom{}}$ for all $k$.
Hence if $M$ is in $\H(q)$ then $\tilde\l(M)_k^{\vphantom{}}$ is in $(\pi_1(M)\ast\pi_1(N))_q$. It follows that the claim is true. 
\end{proof}

For corresponding $p$-towers for $\hM$, $\hN$, $\hMN$, and $\hW$, we have $\partial V_{(t)}=\hM_{(t)}\cup \hN_{(t)} \cup -\hMN_{(t)}$, and hence $$\l_\T(M)+\l_\T(N)-\l_\T(M\cdot N) = [\l_{\Q(\zeta_d)}(V)] - [\l_\Q(V)].$$

The following theorem presents sufficient conditions for the intersection forms on the right hand side to be Witt trivial. Recall that by Lemma~\ref{lemma:defining condition}, if $M \in H(q)$ and $\T$ is a $p$-tower for $\Sigma$ of order $q$, then $\tilde\mu_q^{\vphantom{}}(M)_k^{\vphantom{}}\in F_{(t)}/F_q$ for all~$t$. Also the $k$th coordinate $\tilde\mu(M)_k^{\vphantom{}}$ of $\tilde\mu(M)$ lives in $\varprojlim_{s\geq q} F_{(t)}/F_s$. We will consider a map $\varprojlim_{s\geq q} F_{(t)}/F_s \to \varprojlim_s H_1(F_{(t)})\otimes \Z_{p^s}$, which will be defined in the proof of Theorem~\ref{theorem:additivity condition} below.

\begin{theorem}
	\label{theorem:additivity condition}
	Suppose $\T$ is a $p$-structure of height $h$ for $\Sigma$ of order $q$ and $M$ is a homology cylinder in~$\H(q)$. Then the following are equivalent:
	\begin{enumerate}
		\item[(C1)] $H_1(\Sigma_{(t)};\Z) \to H_1(\hM_{(t)}; \Z)$ is injective.
		\item[(C2)] $H_1(\Sigma_{(t)};\zp) \to H_1(\hM_{(t)};\zp)$ is injective.
		\item[(C3)] $H_1(\Sigma_{(t)};\Q) \to H_1(\hM_{(t)};\Q)$ is injective.
		\item[(C4)] All $\tilde \l(M)_k^{\vphantom{}}$ are torsion elements in~$H_1(M_{(t)})$.
		\item[(C5)] All $\tilde\mu(M)_k^{\vphantom{}}$ lie in the kernel of $\varprojlim_{s\geq q} F_{(t)}/F_s \to \varprojlim_s H_1(F_{(t)})\otimes \Z_{p^s}.$
	\end{enumerate}
If $M$ and $N$ are in $\H(q)$ and either $M$ or $N$ satisfies \textup{(}one of\textup{)} \textnormal{(C1)--(C5)} for $t=h+1$, then
$$\l_\T(M) + \l_\T(N) = \l_\T(M\cdot N).$$	
In addition, the homology cylinders satisfying \textup{(}one of\textup{)} \textnormal{(C1)--(C5)} form a subgroup of~$\H(q)$.
\end{theorem}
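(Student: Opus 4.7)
I would first establish the equivalences. Since $\Sigma_{(t)}$ is a surface, $H_1(\Sigma_{(t)};\Z)$ is free abelian, so (C1)$\Leftrightarrow$(C2)$\Leftrightarrow$(C3) follows from the standard fact that injectivity out of a free abelian group is both preserved and detected by tensoring with $\zp$ or $\Q$. For (C3)$\Leftrightarrow$(C4) I would invoke the presentation $\pi_1(\hM)=\pi_1(M)/\langle\langle\tilde\l_k\rangle\rangle$ recalled in the excerpt, so that the kernel of $H_1(M_{(t)})\to H_1(\hM_{(t)})$ is the subgroup generated by $\Gamma_{(t-1)}$-translates of $[\tilde\l_k]$; combined with the marking isomorphism $H_1(\Sigma_{(t)};\Q)\cong H_1(M_{(t)};\Q)$ arising from (K1) and the homology-cylinder hypothesis, (C3) reduces to the vanishing of each $[\tilde\l_k]$ in $H_1(M_{(t)})\otimes\Q$, which is (C4). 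For (C4)$\Leftrightarrow$(C5) I would identify the map $\varprojlim_{s\geq q} F_{(t)}/F_s\to\varprojlim_s H_1(F_{(t)})\otimes\Z_{p^s}$ via the marking with the abelianization-mod-$p^s$ map on $\pi_1(M_{(t)})$; both conditions then reduce to $[\tilde\l_k]$ vanishing in $H_1(M_{(t)})\otimes\Z_p$, and this is the same as (C4) because (K1) forces $H_1(M_{(t)})$ to be $p$-torsion free (its reduction $H_1(M_{(t)})\otimes\zp$ is free over $\zp$).

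\textbf{Additivity via the standard cobordism.} For the additivity I would use the cobordism $V$ of Lemma~\ref{lemma:cobordism}. Under the hypotheses $M,N\in\H(q)$ and $\T$ of order $q$, the three inclusions $\hM,\hN,\hMN\hookrightarrow V$ are all $p$-tower maps of order $q$ (shown immediately after Lemma~\ref{lemma:cobordism}), so $\T$ lifts uniquely to a $p$-structure on $V$ with top cover $V_{(h)}$ and boundary $\partial V_{(h)}=\hM_{(h)}\sqcup\hN_{(h)}\sqcup(-\hMN_{(h)})$. Passing to $d$-fold multiples in $\Omega_3(B\Z_d)$ and gluing in bounding $4$-manifolds yields
\[
\l_\T(M)+\l_\T(N)-\l_\T(M\cdot N)=\tfrac{1}{d}\big([\l_{\Q(\zeta_d)}(V_{(h)})]-[\l_\Q(V_{(h)})]\big)
\]
in $\Z[\tfrac{1}{d}]\otimes L^0(\Q(\zeta_d))$. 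By (K2) applied to the trivial character (giving $[\l_\Q(V_{(h)})]=0$) and to the $\Z_d$-character (giving $[\l_{\Q(\zeta_d)}(V_{(h)})]=0$), it suffices to show $\Im\{H_2(V_{(h)};\Q)\to H_2(V_{(h)},\partial_0;\Q)\}=0$ and the analogous statement on $V_{(h+1)}$, for a suitable boundary piece $\partial_0$. I would take $\partial_0=\hM_{(h)}\sqcup\hN_{(h)}$ when $M$ satisfies (C1)--(C5), or the complementary piece when $N$ does. Using the handle decomposition of $V$ (one $1$-handle and $2g+n-1$ $2$-handles attached along the curves $\hat i_*(z)\cdot\hat j_*(z^{-1})$), the relative chain complex of $(V,\partial_0)$ computes $H_2(V,\partial_0)$ from the $2$-handle attaching maps; after lifting to the cover, (C4) forces these attaching classes to be torsion in $H_1$ of the relevant boundary cover, so they contribute nothing rationally to $H_2(V_{(h)},\partial_0;\Q)$ or $H_2(V_{(h+1)},\partial_0;\Q)$ from outside the boundary.

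\textbf{Subgroup property and main obstacle.} For the subgroup claim I would verify closure directly: the trivial cylinder $E$ has $\tilde\l_k=1$, so (C4) holds; for $M^{-1}$, the underlying $3$-manifold is $M$ with reversed orientation and swapped markings, so $\tilde\l(M^{-1})_k=\tilde\l(M)_k^{-1}$ in $\pi_1(M)$ and torsion in $H_1$ is preserved; for $M\cdot N$ with both in the subgroup, Mayer--Vietoris on $(M\cdot N)_{(t)}=M_{(t)}\cup_{\Sigma_{(t)}}N_{(t)}$ (which splits because $\eta_q(M)=\eta_q(N)=\id$ forces the two pullback covers of $\Sigma$ to coincide) gives $[\tilde\l(M\cdot N)_k]=(\imath_M)_*[\tilde\l(M)_k]+(\imath_N)_*[\tilde\l(N)_k]$, which is torsion. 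The main obstacle will be the Witt-triviality step in the additivity argument: controlling how the $2$-handle contributions to $H_2(V_{(h)})$ and $H_2(V_{(h+1)})$ behave under the covering $\Gamma$-action so that (K2) applies to both the untwisted and twisted intersection forms simultaneously, and ensuring the conclusion survives the $\tfrac{1}{d}$-denominator in $\Z[\tfrac{1}{d}]\otimes L^0(\Q(\zeta_d))$ rather than merely holding rationally.
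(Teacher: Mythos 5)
The equivalences (C1)$\Leftrightarrow$(C2)$\Leftrightarrow$(C3)$\Leftrightarrow$(C4) are handled in essentially the same way as the paper, and your subgroup argument (which the paper leaves implicit) is sound. However, there are two genuine problems.

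First, your (C4)$\Leftrightarrow$(C5) step is not correct as stated. You reduce both sides to ``$[\tilde\l_k]$ vanishing in $H_1(M_{(t)})\otimes\Z_p$,'' but if $\Z_p$ denotes $\Z/p$ this is strictly weaker than (C4) even when $H_1(M_{(t)})$ is $p$-torsion-free (e.g.\ $p\in\Z$ vanishes mod $p$ but is not torsion). The paper's argument avoids this: it shows (C4) is equivalent to $\tilde\l_k$ dying in $H_1(M_{(t)};\Z_{p^s})$ for \emph{all} $s$ simultaneously, then invokes Lemma~\ref{lemma:algebra} to find $q_s\geq q$ with $\pi_1(M)_{q_s}$ inside each kernel, and finally uses the marking lift $\Sigma_{(t)}\to M_{(t)}$ to transport the condition to $\pi_1(\Sigma_{(t)})/\pi_1(\Sigma)_{q_s}\to H_1(\Sigma_{(t)};\Z_{p^s})$ before passing to the inverse limit. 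That careful back-and-forth between $M$-covers and $\Sigma$-covers at both the nilpotent-quotient and $H_1(-;\Z_{p^s})$ level is not a cosmetic detail; it is what makes (C5) well formed as a condition on $\T$ and $\tilde\mu(M)$ rather than on $M_{(t)}$.

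Second, the additivity step is where your proposal has a real gap, which you yourself flag as ``the main obstacle.'' You propose working from the handle decomposition of $V$ (one $1$-handle plus $2g+n-1$ $2$-handles) and tracking attaching curves in the covers, but you do not close this argument. The paper's route is considerably simpler: view $V$ as $(\hM\cup\hN)\times I\,\cup\,\Sigma\times I\times I$ glued along $(\hat i(\Sigma)\cup\hat j(\Sigma))\times 1$, lift to the $p$-tower, and apply Mayer--Vietoris to get
$$H_2(\hM_{(t)}\cup\hN_{(t)};\Q)\to H_2(V_{(t)};\Q)\to H_1(\Sigma_{(t)};\Q)\to H_1(\hM_{(t)}\cup\hN_{(t)};\Q).$$
The hypothesis that $M$ (or $N$) satisfies (C3) for $t=h+1$ (hence also for $t=h$, by the easy observation that (C4) at level $h+1$ implies it at level $h$) makes the rightmost map injective, so the leftmost is surjective, giving $\Im\{H_2(V_{(t)};\Q)\to H_2(V_{(t)},\hM_{(t)}\cup\hN_{(t)};\Q)\}=0$; then (K2) kills both $[\l_\Q(V)]$ and $[\l_{\Q(\zeta_d)}(V)]$. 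Note also that your worry about the conclusion ``surviving the $\tfrac{1}{d}$-denominator'' is misplaced: once $V$ is in hand, Novikov additivity gives $\l_\T(M)+\l_\T(N)-\l_\T(M\cdot N)=[\l_{\Q(\zeta_d)}(V)]-[\l_\Q(V)]$ exactly, not up to a multiple, so both Witt classes vanishing is already enough. You do not need to choose $\partial_0$ case-by-case depending on whether $M$ or $N$ satisfies the condition; the Mayer--Vietoris argument with $\partial_0=\hM_{(t)}\cup\hN_{(t)}$ handles both at once.
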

We remark that in (C2) and (C3), the injectivity of the map implies that it is an isomorphism. See the proof below.
\begin{proof}
	The implications (C1) $\Rightarrow$ (C2) $\Rightarrow$ (C3) follow that $\zp$ is flat over $\Z$ and $\Q$ is flat over $\zp$. From (K1) in the last paragraph in Section~\ref{sec:homology cobordism invariants}, $H_1(\Sigma_{(t)};\zp) \xrightarrow{\cong} H_1(M_{(t)};\zp)$. Since $\pi_1(M_{(t)})\to\pi_1(\hM_{(t)})$ is surjective, 
	the homomorphisms in (C2) and (C3) are always surjective. Since $H_1(\Sigma_{(t)})$ is torsion-free, $H_1(\Sigma_{(t)};\Z) \to H_1(\Sigma_{(t)};\Q)$ is injective, and so (C3) implies (C1). Also, since $H_1(\Sigma_{(t)})$ is torsion-free, (C3) and (C4) are equivalent. 
	
	(C4) says that all $\tilde \l_k^{\vphantom{}}$ are $0$ in $H_1(M_{(t)};\zp)$ since $H_1(M_{(t)};\zp)$ has no $p$-torsion. Since $$\Ker\{\pi_1(M_{(t)}) \to H_1(M_{(t)};\zp)\} = \bigcap_s \Ker\{\pi_1(M_{(t)}) \to H_1(M_{(t)};\Z_{p^s})\},$$ (C4) is equivalent to $$\tilde\l_k^{\vphantom{}}\in\Ker\{\pi_1(M_{(t)}) \to H_1(M_{(t)};\Z_{p^s})\}$$ for all~$s$. Because $H_1(M_{(t)};\Z_{p^s})$ is a finite $p$-group, by Lemma \ref{lemma:algebra}, the kernel contains $\pi_1(M)_{q_s}$ for some~$q_s\geq q$. The lift $\Sigma_{(t)}\to M_{(t)}$ of $i_+^{\vphantom{}}$ induces isomorphisms on $\pi_1(-)/\pi_1(-)_{q_s}$ and on $H_1(-;\Z_{p^s})$. Thus (C4) is also equivalent to $$\tilde \mu_{q_s}^{\vphantom{}}(M) \in \Ker\{\pi_1(\Sigma_{(t)})/\pi_1(\Sigma)_{q_s} \to H_1(\Sigma_{(t)}; \Z_{p^s})\}$$ for all~$s$. Taking inverse limit, (C5) is obtained.
	
	Note that if $M$ satisfies one of (C1) to (C5) for $t = h+1$, then it is also true for $t= h$ by considering (C4) with $H_1(M_{(h+1)}) \to H_1(M_{(h)})$. We claim that $$\Im\{H_2(V_{(t)};\Q) \to H_2(V_{(t)},\hM_{(t)}\cup \hN_{(t)};\Q)\} = 0$$ for $t=h,h+1$. If so, by (K2) in the last paragraph of Section~\ref{sec:homology cobordism invariants}, $[\l_\Q(V)]$ and $[\l_{\Q(\zeta_d)}(V)]$ vanish. Hence $\l_\T(M) + \l_\T(N) - \l_\T(M\cdot N) = 0$. The cobordism $V$ can be considered as a union of $(\hM \cup \hN) \times I$ and $\Sigma \times I \times I$ whose intersection is $(\hat{i}(\Sigma) \cup \hat{j}(\Sigma))\times 1.$ Applying the Mayer-Vietoris theorem, we obtain an exact sequence
	$$H_2(\hM_{(t)} \cup \hN_{(t)};\Q) \to H_2(V_{(t)} ; \Q) \to H_1(\Sigma_{(t)};\Q) \to H_1(\hM_{(t)} \cup \hN_{(t)};\Q).$$
We have the injectivity of the rightmost map for $t\leq h+1$. Then, the leftmost map is surjective, and the claim is shown.
\end{proof}

We remark that if one of (C1)--(C5) holds for $h$, then $H_1(\hM_{(h)})$ is $p$-torsion free.
Thus in that case, $\l_\T(M)$ lives in $L^0(\Q(\zeta_d))$, by Lemma~\ref{lemma:p-torsion-free}.

Applying the theorem, we obtain a sufficient condition for $\l_\T$ to be a homomorphism of $\H(q)$:

\begin{corollary}
	Suppose $\T$ is a $p$-structure of height $h$ for $\Sigma$ of order~$q$.
	If $\T$ satisfies $F_q \subset [F_{(h+1)}, F_{(h+1)}]$, then 
	 $$\l_\T\colon \H(q)\to L^0(\Q(\zeta_d))$$ is a homomorphism on~$\H(q)$.
\end{corollary}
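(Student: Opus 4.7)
The plan is to deduce from the hypothesis $F_q \subset [F_{(h+1)}, F_{(h+1)}]$ that every $M \in \H(q)$ satisfies condition~(C5) of Theorem~\ref{theorem:additivity condition} at level $t = h+1$; the corollary will then follow from that theorem and the remark after it, with no new geometry needed.

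First, I would fix $M \in \H(q)$. Since $\tilde\mu_q^{\vphantom{}}(M) = 1$, each $\tilde\l(M)_k$ lies in $\pi_1(M)_q$, and the Stallings isomorphism restricts to $F_q/F_s \xrightarrow{\cong} \pi_1(M)_q/\pi_1(M)_s$, so $\tilde\mu_s^{\vphantom{}}(M)_k \in F_q/F_s$ for every $s \geq q$. The order-$q$ assumption on $\T$ yields the chain $F_s \subset F_q \subset F_{(h+1)}$, making $F_q/F_s$ a subgroup of $F_{(h+1)}/F_s$, and the commutator hypothesis then places $\tilde\mu_s^{\vphantom{}}(M)_k$ inside $[F_{(h+1)}, F_{(h+1)}]/F_s$. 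Consequently its image in $H_1(F_{(h+1)}) = F_{(h+1)}/[F_{(h+1)}, F_{(h+1)}]$ vanishes, hence so does its image in $H_1(F_{(h+1)}) \otimes \Z_{p^s}$. Taking the inverse limit over $s$ gives (C5) at $t = h+1$.

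The rest is packaging. Because (C5) at $t = h+1$ implies (C5) at $t = h$ (the observation made inside the proof of Theorem~\ref{theorem:additivity condition}), the remark following that theorem shows $H_1(\hM_{(h)})$ is $p$-torsion free, so Lemma~\ref{lemma:p-torsion-free} places $\l_\T(M)$ in $L^0(\Q(\zeta_d))$ itself (no $\Z[\tfrac{1}{d}]$ denominator). Applying Theorem~\ref{theorem:additivity condition} to arbitrary $M, N \in \H(q)$, both of which satisfy (C5), yields $\l_\T(M \cdot N) = \l_\T(M) + \l_\T(N)$, so $\l_\T$ is a homomorphism on $\H(q)$.

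The only delicate point is the bookkeeping that identifies $\tilde\mu_s^{\vphantom{}}(M)_k$ as an element of $F_q/F_s$ sitting inside $[F_{(h+1)}, F_{(h+1)}]/F_s \subset F_{(h+1)}/F_s$; once the commutator hypothesis is propagated through this identification, both the $L^0(\Q(\zeta_d))$-valuedness and the additivity are formal consequences of results already in hand.
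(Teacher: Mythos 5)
Your proposal is correct and follows essentially the same route as the paper: both reduce the corollary to showing that every $M \in \H(q)$ satisfies (C5) of Theorem~\ref{theorem:additivity condition} at $t=h+1$, using the hypothesis $F_q \subset [F_{(h+1)}, F_{(h+1)}]$ to kill the image in $H_1(F_{(h+1)})$. The only difference is bookkeeping style: you track the individual elements $\tilde\mu_s(M)_k$ through the chain $F_q/F_s \subset [F_{(h+1)},F_{(h+1)}]/F_s$, while the paper phrases it as the factoring of $\varprojlim_s F_{(h+1)}/F_s \to \varprojlim_s H_1(F_{(h+1)})\otimes \Z_{p^s}$ through $F_{(h+1)}/F_q$, where $\tilde\mu_q(M)_k$ vanishes -- these are the same observation.
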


\begin{proof}
	From the hypothesis, we have $F_{(h+1)}/F_q \to H_1(F_{(h+1)})$. This induces a homomorphism $F_{(h+1)}/F_q \to \varprojlim_s H_1(F_{(h+1)})\otimes \Z_{p^s}$, and $\varprojlim_s F_{(h+1)}/F_s \to \varprojlim_s H_1(F_{(h+1)})\otimes \Z_{p^s}$ factors through it. If $M$ is in $\H(q)$, then all $\tilde\mu_q(M)_k^{\vphantom{}}$ vanish in $F_{(h+1)}/F_q$, and $M$ satisfies (C5) in Theorem~\ref{theorem:additivity condition} for $h+1$.
\end{proof}

Also, appealing to (C5), we derive a main result as another corollary:
\begin{corollary}
    \label{cor:homomorphism}
	For any $p$-structure $\T$ for $\Sigma$, 
	$$\l_\T\colon \H(\infty) \to L^0(\Q(\zeta_d))$$ 
	is a homomorphism.
\end{corollary}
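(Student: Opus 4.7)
The plan is to recognize this corollary as an immediate consequence of Theorem~\ref{theorem:additivity condition} once one observes that the condition (C5) becomes vacuous on $\H(\infty)$. I would argue that for any $M, N \in \H(\infty)$, both the well-definedness and the additivity of $\l_\T$ follow directly from the machinery already built, with no further geometric input.

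First, I would verify that $\l_\T(M)$ is defined for every $M \in \H(\infty)$ and every $p$-structure $\T$ for $\Sigma$. Since $\tilde\mu_q(M) = 1$ for all $q$, Theorem~\ref{theorem:p-tower map}(2) says the marking $\hat{i}\colon \Sigma \to \hM$ is a $p$-tower map, so the pullback $(\Phi_M)^{-1}(\T)$ is a well-defined $p$-structure for $\hM$, irrespective of the height of~$\T$.

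Second, I would check that condition (C5) of Theorem~\ref{theorem:additivity condition} holds automatically: each coordinate $\tilde\mu(M)_k$ vanishes in $\varprojlim_s F_{(h+1)}/F_s$, hence \emph{a fortiori} in the kernel of the natural map $\varprojlim_{s\geq q} F_{(h+1)}/F_s \to \varprojlim_s H_1(F_{(h+1)}) \otimes \Z_{p^s}$. By the remark after Theorem~\ref{theorem:additivity condition}, this forces $H_1(\hM_{(h)})$ to be $p$-torsion free, so $\l_\T(M) \in L^0(\Q(\zeta_d))$ rather than only in $\Z[1/d] \otimes L^0(\Q(\zeta_d))$; this is the precise codomain claimed in the corollary.

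Finally, noting that $\H(\infty) = \bigcap_q \H(q)$ is closed under the group operation, for any pair $M, N \in \H(\infty)$ both satisfy (C5) at level $t = h+1$, so the additivity conclusion of Theorem~\ref{theorem:additivity condition} yields
\[
\l_\T(M) + \l_\T(N) = \l_\T(M \cdot N),
\]
which is exactly the homomorphism property. There is no genuine obstacle here: every ingredient—the existence of $(\Phi_M)^{-1}(\T)$, the integrality of the target, and the additivity—has been established, and the corollary is the clean limit of Theorem~\ref{theorem:additivity condition} upon letting $q \to \infty$.
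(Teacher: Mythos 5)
Your proposal is correct and takes the same route as the paper: the paper's intended proof is precisely that condition (C5) of Theorem~\ref{theorem:additivity condition} is trivially satisfied on $\H(\infty)$, so additivity follows, with well-definedness coming from Theorem~\ref{theorem:p-tower map}(2) and the codomain from the remark on $p$-torsion freeness. You have simply spelled out these steps that the paper leaves implicit.
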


\begin{proof}[Proof of Lemma~\ref{lemma:algebra}] 
  Let $N_{(0)}=G$ and $$N_{(t+1)}=\Ker\{N_{(t)}\to \frac{N_{(t)}}{[N_{(t)},N_{(t)}]}=H_1(G;\Z[G/N_{(t)}])\to H_1(G;\Z_{p^{a_t}}[G/N_{(t)}])\}$$ where $p^{a_t}=|G_{(t)}/G_{(t+1)}|$. That is, $N_{(t)}=\P^t G$, the $\P$-mixed-coefficient commutator series, where $\P=(\Z_{p^{a_0}},\Z_{p^{a_1}},\ldots)$ (see~\cite{C}). We claim:
  \begin{enumerate}
    \item $N_{(t)} \vartriangleleft G$,
    \item $G/N_{(t)}$ is a finite $p$-group, and
    \item $N_{(t)} \subset G_{(t)}$.
  \end{enumerate}
  From (2), $G/N_{(t)}$ is nilpotent.
  Therefore, for all $t$, there is some $q$ such that $G_q \subset N_{(t)}$. Combining this with (3), we obtain the conclusion.

  Let us show the above three claims.
  (1) can be shown by induction since $G$ acts on $H_1(G;\Z_{p^{a_t}}[G/N_{(t)}])$ by congugation.
  Because $G$ is finitely generated and $H_1(G;\Z_{p^{a_t}}[G/N_{(t)}])$ is a finite $p$-group, each $N_{(t)}/N_{(t+1)}$ is a finite $p$-group and so is $G/N_{(t)}$.
  We use an induction for (3).
  $$\begin{diagram}  \dgARROWLENGTH=.5em 
    \node{N_{(t+1)}} \arrow{e,t,J}{\Ker}\arrow[2]{s,..,J} \node{N_{(t)}} \arrow[2]{ee}\arrow{se,r}{ab.}\arrow[2]{s,J} \node[2]{H_1(G;\Z_{p^{a_t}}[G/N_{(t)}])} \arrow[2]{s,..}\\
    \node[3]{H_1(G;\Z[G/N_{(t)}])} \arrow{ne}\arrow{se} \\
    \node{G_{(t+1)}} \arrow{e,t,J}{\Ker} \node{G_{(t)}} \arrow[2]{e} \node[2]{G_{(t)}/G_{(t+1)}}
  \end{diagram}$$
  In the above diagram, $H_1(G;\Z_{p^{a_t}}[G/N_{(t)}]) = H_1(N_{(t)})/ p^{a_t} H_1(N_{(t)})$ and $G_{(t)}/G_{(t+1)}$ is abelian and of order $p^{a_t}$, and so the rightmost vertical map exists. Hence the leftmost vertical map also exists and is injective.
\end{proof}

\section{Structures in $\H(\infty)$ and its subgroups}
  \label{sec:effect}
In this section, we construct infinitely many homology cylinders to investigate the structure of $\H$, especially in~$\H(\infty)$. Our examples are constructed by infection on the trivial homology cylinder. We start with a description of infection by a knot. 
For a 3-manfold $M$ and a simple closed curve $\alpha$ in the interior of $M$, by removing an open tubular neighborhood of $\alpha$ from $M$ and by filling in it with the exterior of a knot $K$ in $S^3$ so that the meridian and the preferred longitude of $K$ are identified with the preferred longitude and the meridian of $\alpha$, respectively, we obtain a new 3-manifold~$N$. We say that $N$ is obtained from $M$ by \emph{infection along $\alpha$ using}~$K$. This construction appeared in \cite{COT, COT2}.

Let $(M,i_+^{\vphantom{}},i_-^{\vphantom{}})$ be a homology cylinder and $M'$ be obtained from $M$ by infection along $\alpha$ using~$K$. It is well known that there is a homology equivalence $f\colon M' \to M$, which extends the identity map between $M' -$ (exterior of $K$) and $M -$ (tubular neighborhood of $\alpha$) (for example, see \cite[Proposition 4.8]{C10}). Hence $M'$ is a homology cylinder with markings $i_\pm'\colon \Sigma \to M'$ induced by~$i_\pm^{\vphantom{}}$.

We consider the effect of infection on the invariants of Garoufalidis and Levine \cite{GL}, Cha, Friedl, and Kim \cite{CFK}, Morita \cite{M}, Sakasai \cite{S}, Cochran, Harvey, and Horn \cite{CHH} (see the introduction), the extended Milnor invariants $\tilde\mu_q^{\vphantom{}}$, and the Hirzebruch-type invariants~$\l_\T$.
Let $H=H_1(\Sigma)$.
\begin{enumerate}
\item[(a)] Garoufalidis-Levine homomorphisms~\cite{GL} 
$$\eta_q^{\vphantom{}}\colon\H_{g,n} \to \Aut(F/F_q)$$ \\
    The left commutative diagram induces the right commutative diagram.
    $$
    \begin{diagram}  \dgHORIZPAD=.6em
      \node{\Sigma} \arrow{e,t}{i_-'} \arrow{se,b}{i_-^{\vphantom{}}} \node{M'} \arrow{s,l}{f} \node{\Sigma}  \arrow{w,t}{i_+'} \arrow{sw,b}{i_+^{\vphantom{}}} \\
      \node{} \node{M}
    \end{diagram}
    \qquad
        \begin{diagram}  \dgHORIZPAD=.7em
      \node{\frac{\pi_1(\Sigma)}{\pi_1(\Sigma)_q}} \arrow{se,tb}{\cong}{(i_-^{\vphantom{}})_{*q}^{\vphantom{}}} \arrow{e,tb}{(i_-')_{*q}^{\vphantom{}}}{\cong} \node{\frac{\pi_1(M')}{\pi_1(M')_q}} \arrow{s,r}{f_*} \node{\frac{\pi_1(\Sigma)}{\pi_1(\Sigma)_q}} \arrow{w,tb}{(i_+')_{*q}^{\vphantom{}}}{\cong} \arrow{sw,tb}{\cong}{(i_+^{\vphantom{}})_{*q}^{\vphantom{}}}\\
      \node{} \node{\frac{\pi_1(M)}{\pi_1(M)_q}}
    \end{diagram}
    $$
Since $\eta_q^{\vphantom{}}(M)=(i_+^{\vphantom{}})_{*q}^{-1} \circ (i_-^{\vphantom{}})_{*q}^{\vphantom{}}$, $\eta_q^{\vphantom{}}(M')=\eta_q^{\vphantom{}}(M)$.

\item[(b)] Morita homomorphism~\cite{M}
$$\tilde{\rho}\colon\H_{g,1} \to (\Lambda^3H_\Q \oplus \bigoplus_{k=1}^\infty S^{2k+1} H_\Q)\rtimes Sp(2g,\Q)$$
 Here $S^{2k+1}H$ denotes the ($2k+1$)st symmetric power of $H$ and $H_\Q=H\otimes \Q$.
This $\tilde{\rho}$ is the composition of the limit of $\eta_q^{\vphantom{}}$ with a trace map. Hence $\tilde{\rho}(M')=\tilde{\rho}(M)$ by (a), and $\bigcap_{q=1}^\infty \Ker(\eta_q^{\vphantom{}}) \subset \Ker(\tilde{\rho})$.

\item[(c)] Sakasai's Magnus representations~\cite{S}
$$r_{q}\colon\H_{g,1} \to GL(2g,Q(F/F_q)) \; \textrm{ and } \; r\colon\H_{g,1} \to GL(2g,\Lambda_{\hF})$$ 
Here $Q(F/F_q):=\Z[F/F_q](\Z[F/F_q]-\{0\})^{-1}$, $\hF$ is the algebraic closure with respect to $\Z$ \cite{C08} (which is called the acyclic closure by Sakasai), and $\Lambda_{\hF}$ is the Cohn localization of the augmentation map $\Z[\hF] \to\Z$.
They are crossed homomorphisms, and the restrictions to $\H_{g,1}[q]$ and $\Ker\{\H_{g,1}\to \Aut(\hF)\}$ are homomorphisms, respectively.
  Similar to (a), by taking the first relative homology $H_1(-,\ast;Q(F/F_q))$ on the left diagram in (a), we see that $r_q(M')=r_q(M)$. Also by taking the acyclic closure of the fundamental group, we obtain $r(M)=r(M')$. 
  
\item[(d)] Cha-Friedl-Kim's torsion invariant~\cite{CFK} 
$$\tau \colon \H_{g,n} \to Q(H)^\times / \pm HAN$$ 
Here $Q(H)$ is the quotient field of $\Z[H]$, $$A=\{p^{-1}\cdot \eta (p)~|~p\in Q(H)^\times, \eta \in \Im\eta_2^{\vphantom{}} \},~~ N=\{q \cdot \bar{q} ~|~ q \in Q(H)^\times\},$$ and $\bar{\hphantom{q}}$ is the extension of the involution of the group ring~$\Z[H]$. This is a homomorphism.
The effect of infection is studied in \cite[Theorem~4.2]{CFK}. We discuss some details for the reader's convenience.
  We first consider the effect on $\tau \colon \C_{g,n} \to Q(H)^\times / \pm H$. Let $\tau^\phi_K$ be the torsion of the acyclic cellular chain complex $C_*(S^3-K, m_K;\Z[H])$ with a meridian $m_K$ of~$K$. Let $\phi\colon H_1(S^3-K) \to H_1(M') \xrightarrow{(i_+')_*^{-1}} H$ be induced by the inclusion $S^3 - K \hookrightarrow M'$. Denote the tubular neighborhood of $\alpha$ by~$\nu(\alpha)$. Then we have
  $$\tau(M')\doteq \tau(M) \cdot \tau(S^3-K) \cdot \tau(\nu(\alpha))^{-1}.$$
  Here $\doteq$ means the equality in $Q(H)^\times/\pm H$. From the exact sequence
  $$0 \to C_*(m_K;Q(H)) \to C_*(S^3-K;Q(H)) \to C_*(S^3-K, m_K;Q(H)) \to 0 $$
  with $(S^3-K,m_K)$ acyclic, we obtain $\tau(S^3-K)\doteq \tau(m_K)\cdot \tau^\phi_K$. The class of $m_K$ in $H_1(S^3-K)$ maps to the class of $\alpha$ in $H_1(M')$, and $S^1$ and $S^1\times D^2$ are simple homotopy equivalent. Thus $\tau(m_K)\doteq \tau(\alpha)\doteq \tau(\nu(\alpha))$. From this, we obtain $\tau(M') \doteq \tau(M)\cdot \tau^\phi_K$.
  \begin{remark}
    \label{remark:torsion}
    Since any knot exterior in $S^3$ is a homology cylinder over~$\Sigma_{0,2}$, $$\overline{\tau(S^3-K)}\doteq \tau(S^3-K) \in Q(H_1(\Sigma_{0,2}))^\times/\pm H_1(\Sigma_{0,2})$$ by \cite[Lemma 3.13]{CFK}, and the inclusion $S^3-K \hookrightarrow M'$ induces $$\overline{\tau(S^3-K)}\doteq \tau(S^3-K) \in Q(H)^\times/\pm H.$$ Thus, $\overline{\tau(M')}\doteq \tau(M')$ if $\overline{\tau(M)}\doteq \tau(M)$.
  \end{remark}

\item[(e)] Cochran-Harvey-Horn's von Neumann $\rho$-invariants~\cite{CHH}
$$\rho_q\colon \H_{g,1}[q] \to \R$$
  They studied the effect of infection in \cite[Proposition 8.11]{CHH}: when $\alpha \in \pi_1(\hM)_k$ but no power of $\alpha$ lies in $\pi_1(\hM)_{k+1}$, 
  $$ \rho_q(M')-\rho_q(M) = \left\{ \begin{array}{ll} 0 &  \textrm{if  } 2\leq q \leq k \\ \int_{S^1} \sigma_{K}(\omega)\, d\omega & \textrm{if  } q> k \end{array} \right. .$$
  Here $\sigma_{K}(\omega)$ is the Levine-Tristram signature of~$K$. The map $\rho_q$ is a ``quasimorphism'' on $H_{g,1}[q]$, and it is a homomorphism on~$\Ker r_q$.

\item[(f)] Extended Milnor invariants $$\tilde\mu_q\colon \H_{g,n} \to  (F/F_q)^{2g+n-1}$$ 
Their restrictions are homomorphism on $\H[q]$ or~$\H^0[q-1]$.  
Since they are also obtained from the induced maps on $\pi_1(-)/\pi_1(-)_q$, they are preserved by infection.
\end{enumerate}

Now we consider the Hirzebruch-type invariants
 $$\l_\T\colon \H_{g,n}(q) \to \Z\Big[\frac{1}{d}\Big] \otimes_\Z L^0(\Q(\zeta_d))$$
 
defined in Section~\ref{sec:hirzebruch}.
Here $\T$ is a $p$-structure of order $q$. It is a homomorphism on the subgroup of homology cylinders satisfying (C1)--(C5) in Theorem~\ref{theorem:additivity condition} into $L^0(\Q(\zeta_d))$, especially on $\H_{g,n}(\infty)$.  

In \cite[Corollary 4.7]{C10}, the effect of infection is analyzed for general closed 3-manifolds. By applying it to homology cylinders, we obtain the following theorem:
\begin{theorem}[A special case of Corollary 4.7 in \cite{C10}]
  \label{theorem:effect}
  Let $M$ be a homology cylinder in $\H(q)$ and $\T$ be a $p$-structure of height $h$ for $\Sigma$. Let $\psi\colon \pi_1(M_{(h)})\to\Z_d$ be the character induced by $\T$. Let $\tilde\alpha_1, \tilde\alpha_2, \ldots\subset M_{(h)}$ be the components of the pre-image of $\alpha$ and $r_j$ be the degree of the covering map $\tilde\alpha_j \to \alpha$. Let $A$ be a Seifert matrix of $K$.
Then 
$$ \l_\T(M') = \l_\T(M) + \sum_j\big([\l_{r_j}(A,\zeta_d^{\psi([\tilde\alpha_j])})]-[\l_{r_j}(A,1)]\big)$$
where $[\l_r(A,\omega)]$ is the Witt class of (the nonsingular part of) the hermitian form represented by the following $r\times r$ block matrix:
$$\l_r(A,\omega) = 
  \begin{bmatrix}
    \vphantom{\ddots} A+A^T & -A & & & -\omega^{-1} A^T\\
    \vphantom{\ddots} -A^T & A+A^T & -A \\
    \vphantom{\ddots} & -A^T & A+A^T & \ddots \\
    \vphantom{\ddots} & & \ddots & \ddots & -A \\
    \vphantom{\ddots} -\omega A & & & -A^T & A+A^T
  \end{bmatrix}_{r\times r} .
  $$
  For $r=1,2$, $\l_r(A,\omega)$ should be understood as 
  $$
  \begin{bmatrix}
    (1-\omega)A+(1-\omega^{-1})A^T
  \end{bmatrix}
  \quad\text{and}\quad
  \begin{bmatrix}
    A+A^T & -A-\omega^{-1}A^T \\
    -A^T-\omega A & A+A^T
  \end{bmatrix}.
  $$
\end{theorem}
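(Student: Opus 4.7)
The plan is to reduce the statement to Corollary~4.7 of~\cite{C10}, applied to the closures $\hM$ and $\hM'$ equipped with the $p$-structures pulled back from $\T$ via the markings. Since the infection curve $\alpha$ lies in the interior of $M$, the infection operation commutes with taking the closure: $\hM'$ is obtained from $\hM$ by infection along the image of $\alpha$ using $K$, and the standard homology equivalence $f\colon M'\to M$ extends to a homology equivalence $\hat f\colon\hM'\to\hM$ which is the identity outside the infection region. Moreover, $\hat f$ intertwines the markings $\hat i$ and $\hat{i}'$, so the two markings realise the same map up to homotopy under the identification induced by $\hat f$.

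Next I would transport the $p$-structure. Because $M,M'\in\H(q)$ and $\T$ has order $q$ (implicit in the requirement that $\l_\T(M)$ be defined), Theorem~\ref{theorem:p-tower map} and Lemma~\ref{lemma:defining condition} yield $p$-structures $\T_{\hM}:=\Phi_M^{-1}(\T)$ for $\hM$ and $\T_{\hM'}:=\Phi_{M'}^{-1}(\T)$ for $\hM'$, both of height $h$, with top covers $\hM_{(h)}$ and $\hM'_{(h)}$ and induced characters $\hat\psi$, $\hat\psi{}'$ on their fundamental groups. A naturality check using the commutative square formed by $\hat i$, $\hat i{}'$, and $\hat f$ shows that $\T_{\hM'}$ corresponds to $\T_{\hM}$ under pullback by $\hat f$, so that
\[
\l_\T(M')-\l_\T(M)=\l(\hM',\T_{\hM'})-\l(\hM,\T_{\hM}).
\]

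I would then identify the preimage data. Since $\alpha\subset\operatorname{int}(M)$, the restriction of the tower $\hM_{(h)}\to\hM$ over $M$ coincides with the tower $M_{(h)}\to M$; hence the components $\tilde\alpha_j\subset M_{(h)}$ listed in the statement are in canonical bijection with the components of the preimage of $\alpha$ in $\hM_{(h)}$, with matching covering degrees $r_j$. Also, the character $\psi$ on $\pi_1(M_{(h)})$ induced by $\T$ is the restriction of $\hat\psi$ along the inclusion $M_{(h)}\hookrightarrow\hM_{(h)}$, so the values $\psi([\tilde\alpha_j])$ used in the statement agree with those computed from $(\hM,\T_{\hM})$.

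Finally, I would invoke Corollary~4.7 of~\cite{C10} directly, which asserts
\[
\l(\hM',\T_{\hM'})-\l(\hM,\T_{\hM})=\sum_j\bigl([\l_{r_j}(A,\zeta_d^{\psi([\tilde\alpha_j])})]-[\l_{r_j}(A,1)]\bigr),
\]
with the block hermitian forms $\l_r(A,\omega)$ built from a Seifert matrix $A$ of $K$ as described. Combined with the previous identifications this gives the theorem. The only real work is thus the bookkeeping in the first two steps; the main obstacle, if any, is the naturality check ensuring that the pullback $p$-structures on $\hM$ and $\hM'$ correspond under $\hat f$, but this follows immediately from the fact that both are defined as pullbacks of the same $\T$ along compatible markings, together with the standard observation that infection does not alter the tower of covers outside a regular neighbourhood of $\alpha$.
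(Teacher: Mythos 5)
Your proposal is correct and follows exactly the reduction the paper has in mind: the paper gives no proof beyond the remark that the statement is ``a special case of Corollary 4.7 in [Cha10]'' obtained ``by applying it to homology cylinders,'' and your argument is precisely the careful expansion of that application — infection commutes with closure, the pullback $p$-structures on $\hM$ and $\hM'$ correspond under the infection homology equivalence, and the preimage/character data for $\hM_{(h)}$ restrict to the stated data on $M_{(h)}$. The naturality checks you flag (compatibility of $\Phi_{\hat i}$, $\Phi_{\hat i'}$, $\Phi_{\hat f}$ from $\hat f\circ\hat i'=\hat i$, and the identification $M_{(t)}=q^{*}\hM_{(t)}$ from $\hat i=q\circ i_+$) are the right ones and go through.
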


Let $E(\alpha,K)$ be the homology cylinder obtained from the trivial homology cylinder $E$ by infection using $K$ along~$\alpha$. 
By Remark~\ref{remark:torsion}, $\tau(E(\alpha,K)^2)\in Q(H)^\times/\pm HAN$ vanishes for any $\alpha$ and~$K$. 
For $K$ with $\int_{S^1} \sigma_{K}(\omega) = 0$, $\rho_q(E(\alpha, K))$ vanishes and all invariants in (a)--(f) vanish on $E(\alpha, K)^2$. 

We will choose a simple closed curve $\alpha$ and an infinite sequence $\{K_i\}$ of knots such that $E(\alpha,K_i)^2$ are distinguished by $\l_\T$. For this purpose, we need the following two lemmas:
\begin{lemma}[Lemma 5.3 in \cite{C09}]
  \label{lemma:infection-curve-and-tower}
  When $b_1(\Sigma)>1$, for any $h$, there exist a loop $\gamma$ in $\Sigma$ and a $p$-tower $\{\Sigma_{(t)}\}$ of height $h$ for $\Sigma$ satisfying the
  following:
  \begin{enumerate}
  \item $[\gamma] \in \pi_1(\Sigma)^{(h)}$.
  \item Every lift $\tilde \gamma_j$ of $\gamma$ in $\Sigma_{(h)}$ is a loop.
  \item There is a map $\phi\colon \pi_1(\Sigma_{(h)}) \to \Z$ which sends (the
    class of) each $\tilde\gamma_j$ to $-1$, $0$, or $1$ and sends at
    least one $\tilde\gamma_j$ to~$1$.
  \end{enumerate}
\end{lemma}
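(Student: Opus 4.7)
The plan is to construct the $p$-tower $\{\Sigma_{(t)}\}_{t\le h}$, the loop $\gamma$, and the homomorphism $\phi$ simultaneously by induction on $h$. Because $\Sigma$ has nonempty boundary, $F := \pi_1(\Sigma)$ is free, and $b_1(\Sigma) > 1$ together with multiplicativity of Euler characteristic under finite covers implies that the fundamental group $F_{(t)} := \pi_1(\Sigma_{(t)})$ of every cover in the tower will be a free group of rank $\geq 2$. The base case $h=0$ is immediate: take $\gamma$ to be any loop in $\Sigma$ whose class is primitive in $H_1(\Sigma;\Z)$ and $\phi$ the dual homomorphism with $\phi(\gamma)=1$.

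For the inductive step, suppose the conclusion is known at height $h-1$, with $\gamma' \in F^{(h-1)}$ and $\phi'\colon F_{(h-1)} \to \Z$. Define $\Sigma_{(h)} \to \Sigma_{(h-1)}$ to be the $\Z_p$-cover corresponding to the character $\bar\phi' := (\phi' \bmod p)\colon F_{(h-1)} \twoheadrightarrow \Z_p$, which is surjective by condition (3). Then $F_{(h)} := \ker \bar\phi'$, and the extended tower is a $p$-tower of height $h$. Using that $F_{(h-1)}$ is a nonabelian free group, I would choose $\delta \in F^{(h-1)}$ with $\bar\phi'(\delta) = 0$ and whose image in $F_{(h-1)}^{\mathrm{ab}}$ is $\Z$-linearly independent of that of $\gamma'$, then set $\gamma := [\gamma',\delta] \in [F^{(h-1)}, F^{(h-1)}] = F^{(h)}$. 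Since $F^{(h)} \subset F_{(h)}$ is normal in $F$, every lift of $\gamma$ to $\Sigma_{(h)}$ is a loop, establishing (1) and (2).

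The remaining task is (3). The lifts $\tilde\gamma_j$ of $\gamma$ to $\Sigma_{(h)}$ are indexed by cosets of $F_{(h)}$ in $F$, and as elements of $F_{(h)}$ they equal $[\tilde\gamma'_j, \tilde\delta_j]$, where $\tilde\gamma'_j$, $\tilde\delta_j$ denote the corresponding conjugates of $\gamma'$, $\delta$. I would compute each class $[\tilde\gamma_j] \in F_{(h)}^{\mathrm{ab}}$ via Reidemeister--Schreier rewriting for the $\Z_p$-cover $F_{(h)} \subset F_{(h-1)}$, using the transversal $\{1,\sigma,\ldots,\sigma^{p-1}\}$ for some $\sigma \in F_{(h-1)}$ with $\bar\phi'(\sigma)=1$. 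The prototype case $h=1$ with $F=\langle a,b\rangle$, $\gamma' = a$, $\gamma=[a,b]$ is illustrative: the $p$ Schreier generators $b_j := a^{j}ba^{-j}$ for $0\le j<p$ yield lifts with homology classes $b_{k+1}-b_k$ (indices mod $p$), and the coordinate functional $b_j \mapsto \delta_{j,1}$ takes values $(1,-1,0,\ldots,0)\subset \{-1,0,1\}$ on them. The analogous formula at height $h$ should express $[\tilde\gamma_j]$ as a difference of consecutive $\sigma$-translates of the Schreier lift of $\tilde\delta_j$, weighted by the integer $\phi'(\tilde\gamma'_j)\in\{-1,0,1\}$, so a suitable coordinate functional supplies the desired $\phi$.

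The main obstacle is rigorously establishing the Fox-calculus identity expressing $[\tilde\gamma_j]\in F_{(h)}^{\mathrm{ab}}$ in terms of Schreier generators, and verifying the $\{-1,0,1\}$ pattern uniformly as $g_j$ ranges over all of $F/F_{(h)}$ (not only over $F_{(h-1)}/F_{(h)} = \Z_p$). This should follow from equivariance of the construction under the outer deck group $F/F_{(h-1)}$ combined with a careful bookkeeping of how the commutator $[\gamma',\delta]$ picks up the $\bar\phi'$-grading along $\sigma$-crossings, as illustrated by the prototype computation above.
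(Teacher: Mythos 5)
The paper does not prove this lemma: it is quoted as ``Lemma $5.3$ in [C09]'' from Cha's earlier work and used as a black box, so there is no in-paper proof to compare against. Evaluating your argument on its own terms: the overall plan --- build the tower inductively one $\Z_p$-cover at a time, take $\gamma$ to be an iterated commutator so that it lands in $F^{(h)} \subset F_{(h)}$, and track the abelianized lifts by Reidemeister--Schreier bookkeeping --- is sensible, and your prototype computation at $h=1$ is correct. But the two points you yourself flag as unresolved are genuine gaps, and your proposed route around the second one does not work.

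First, the existence of $\delta \in F^{(h-1)}$ with $\bar\phi'(\delta)=0$ whose image in $F_{(h-1)}^{\mathrm{ab}}$ is independent from that of $\gamma'$ is asserted, not proved; for $h\geq 2$ the subgroup $F^{(h-1)}$ has enormous index in $F_{(h-1)}$, and the inductive hypothesis supplies only \emph{one} conjugate of $\gamma'$ with nonzero $\phi'$-value --- it gives no control over a second independent direction inside $F^{(h-1)}\cap F_{(h)}$. Second, and more fundamental: verifying that \emph{all} $p^h$ lifts $\tilde\gamma_j$, indexed by all of $F/F_{(h)}$ and not just the $p$ cosets in $F_{(h-1)}/F_{(h)}$, have $\phi$-value in $\{-1,0,1\}$ is the real content of (3), and your appeal to ``equivariance under the outer deck group $F/F_{(h-1)}$'' breaks at a basic point. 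In your recursive construction, $\phi'$ is the coordinate functional dual to a single Reidemeister--Schreier generator, which under conjugation by the ambient deck group gets permuted to a \emph{different} coordinate functional (this is visible already in your $h=1$ computation, where the $\phi'$-values on the lifts are $(1,-1,0,\dots,0)$ rather than constant). Hence $\bar\phi'$ is not $F$-invariant, $F_{(h)}$ is not normal in $F$ for $h\geq 2$, the tower is not a tower of regular covers of $\Sigma$, and there is no deck action of $F/F_{(h-1)}$ on $\Sigma_{(h)}$ to equivariate against. The lifts based at cosets outside $F_{(h-1)}$ therefore require their own computation, and until that (or a replacement device) is carried out, what you have is a plan rather than a proof.
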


\begin{lemma}[Lemma 5.2 in \cite{C09}]
  \label{lemma:knots}
  For any prime $p$, there is an infinite sequence $\{K_i\}$ of knots together with a
  strictly increasing sequence $\{d_i\}$ of powers of $p$ satisfying
  the following properties:
  \begin{enumerate}
  \item $\sigma_{K_i}(\zeta_{d_i}) > 0$, and if $p=2$ then
    $\sigma_{K_i}(\zeta_{d_i}^s) \geq 0$ for any~$s$.
  \item If $i>j$ then $\sigma_{K_i}(\zeta_{d_j}^s) = 0$ for any $s$.
  \item $\int_{S^1} \sigma_{K_i}(\omega)\, d\omega = 0$.
  \item $K_i$ has vanishing Arf invariant.
  \end{enumerate}
\end{lemma}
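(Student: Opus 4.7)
The plan is to build each $K_i$ as a connected sum of genus-one knots, exploiting that the Levine--Tristram signature $\sigma_K(\omega)$ is piecewise constant on $S^1$ with jumps only at roots of $\Delta_K$ on $S^1$, and that both the locations of the jumps and the sign of $\sigma_K$ between them are directly controlled by the Seifert form. As a basic building block, the genus-one knot $J_n$ with Seifert matrix $\left(\begin{smallmatrix} 1 & 1 \\ 0 & n \end{smallmatrix}\right)$ satisfies $\sigma_{J_n}(e^{i\theta})=2$ for $|\theta|>\theta_n:=2\arcsin(1/(2\sqrt{n}))$ and vanishes otherwise; hence $J_n\,\#\,(-J_m)$ with $n<m$ has signature $+2$ on the annular arc $\theta_m<|\theta|<\theta_n$ and $0$ elsewhere, and by varying $n$ and $m$ these arcs can be positioned arbitrarily close to or away from $\omega=1$.

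Inductively, assume $d_1<\cdots<d_{i-1}$ and $K_1,\dots,K_{i-1}$ have been chosen with the required properties. Pick $d_i$ a power of $p$ large enough that the set $\{\zeta_{d_i}^s\}$ clusters densely in the interstices of the finite set $\{\zeta_{d_j}^s : j<i\}$. Form a finite connected sum of blocks of the type $J_n\,\#\,(-J_m)$ so that the resulting signature support contains $\zeta_{d_i}$ (and, if $p=2$, all powers $\zeta_{d_i}^s$) while remaining disjoint from $\{\zeta_{d_j}^s : j<i\}$; by the symmetry $\sigma_K(\bar\omega)=\sigma_K(\omega)$ it suffices to consider $s$ up to $d_i/2$. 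Choosing signs so that $\sigma_{K_i}$ is positive on these support arcs yields (1) and (2).

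For (3), further connected-sum with genus-one knots whose signature supports lie in arcs of $S^1$ disjoint from every $\zeta_{d_j}^s$ with $j\leq i$, choosing signs and sizes so that the total signed area under $\sigma_{K_i}$ vanishes. For (4), the Arf invariant is additive mod $2$ under connected sum and can be toggled by summing with an Arf-one knot whose signature support again avoids all relevant roots of unity; such a knot is produced by the same genus-one construction in a different arc. The main obstacle is the $p=2$ case of (1), which requires $\sigma_{K_i}\geq 0$ at every $d_i$-th root of unity simultaneously with $\sigma_{K_i}=0$ at every $d_j$-th root of unity for $j<i$; this is achievable only when the inductive gap $d_i/d_{i-1}$ is taken large enough to create room on $S^1$ for the necessary combinatorial arrangement of signature arcs, and verifying this room exists is the key step in each induction stage.
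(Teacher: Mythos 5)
The paper itself does not prove this lemma; it is cited verbatim as Lemma~5.2 of \cite{C09}, so there is no internal argument to compare against. Your overall strategy --- connect-summing genus-one knots whose Levine--Tristram signature has a single jump at a controlled angle --- is a sound general approach to statements of this type. Two small slips first: with the convention $\sigma_K(\omega)=\sign\bigl((1-\omega)V+(1-\bar\omega)V^T\bigr)$, the block $J_n\#(-J_m)$ with $n<m$ has signature $-2$, not $+2$, on $\theta_m<|\theta|<\theta_n$ (there $\sigma_{J_n}=0$ while $\sigma_{J_m}=2$), so the roles of $n$ and $m$ should be swapped; and the $p=2$ requirement you phrase as ``the signature support contains all $\zeta_{d_i}^s$ while remaining disjoint from $\{\zeta_{d_j}^s:j<i\}$'' is self-contradictory, since $\{\zeta_{d_j}^s\}\subset\{\zeta_{d_i}^s\}$ whenever $d_j\mid d_i$. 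What you actually need is that the \emph{negative} part of $\sigma_{K_i}$ avoid every $d_i$-th root of unity, and that the \emph{positive} part contain $\zeta_{d_i}^{\pm1}$ while avoiding $\{\zeta_{d_j}^s:j<i\}$; both are arrangeable by placing thin arcs between consecutive $d_i$-th roots of unity.

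The genuine gap is in condition (3). For $n\geq 2$ one has $\cos\theta_n=1-\frac{1}{2n}\notin\{0,\pm\frac{1}{2},\pm1\}$, so by Niven's theorem $\theta_n/\pi$ is irrational. A connected sum of signed $J_n$-blocks has $\int_{S^1}\sigma\,d\omega = 4\pi\sum_n a_n -4\sum_n a_n\theta_n$ with $a_n\in\Z$, and there is no visible reason for a nontrivial $\Z$-linear combination of $\pi$ and the angles $\theta_n$ (which, apart from $\theta_1=\pi/3$, are irrational multiples of $\pi$ with no evident rational relations among themselves) to vanish exactly. ``Choosing signs and sizes so that the total signed area vanishes'' is therefore not something one can do inside this one-parameter family: the arc-lengths are the wrong kind of number for an exact integer cancellation. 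The standard fix is to take building blocks whose Alexander polynomial has only roots of unity as zeros --- for example suitable $(2,2k+1)$-torus knots and their $(m,1)$-cables --- so that every signature jump occurs at a rational multiple of $\pi$ and the cancellation in (3) becomes a matter of integer bookkeeping; with that change your induction closes. Relatedly, for (4) the cleanest choice is to connected-sum with the figure-eight knot, which has Arf invariant $1$ and identically vanishing Levine--Tristram signature, so it leaves (1)--(3) undisturbed; toggling Arf with another genus-one block of nonzero signature, as you propose, would re-open the cancellation problem in (3).
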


We remark that (1) in Lemma~\ref{lemma:infection-curve-and-tower} and (3), (4) in Lemma~\ref{lemma:knots} will be used in Section~\ref{sec:cobordisms}. 

Now we obtain one of our main results: 

\begin{theorem}
  \label{theorem:infinite rank}
  Suppose $b_1(\Sigma)>1$. Then the abelianization of the intersection of the kernels of the invariants in \textup{(}a\textup{)--(}f\textup{)} is of infinite rank.
\end{theorem}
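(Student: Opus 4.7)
The plan is to exhibit an explicit sequence $\{M_i\}\subset\H_{g,n}$ lying in the intersection of all the kernels in (a)--(f), and to detect linear independence of their classes in the abelianization using the Hirzebruch-type invariants $\l_\T$ of Section~\ref{sec:hirzebruch}. The examples will be squares of infections of the trivial cylinder, chosen so that the ``easy'' invariants die automatically (either by being preserved under infection, or by virtue of the squaring trick, or by $\int \sigma_{K_i}=0$), while the $\l_\T$ detect a triangular pattern.

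First I would apply Lemma~\ref{lemma:infection-curve-and-tower} (this is where $b_1(\Sigma)>1$ is needed) to fix a loop $\gamma\subset\Sigma$ together with a $p$-tower $\{\Sigma_{(t)}\}$ of some height $h$ and a character $\phi\colon\pi_1(\Sigma_{(h)})\to\Z$ sending each lift $\tilde\gamma_k$ of $\gamma$ to $0$ or $\pm 1$, with at least one lift sent to $1$. Then apply Lemma~\ref{lemma:knots} to choose a sequence of knots $\{K_i\}$ and a strictly increasing sequence $\{d_i\}$ of powers of $p$. Set $M_i := E(\gamma,K_i)^2\in\H_{g,n}$. I would then check that each $M_i$ lies in the kernel of every invariant (a)--(f). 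The invariants $\eta_q,\tilde\mu_q,\tilde\rho,r_q$ are preserved by the infection-induced homology equivalence $E(\gamma,K_i)\to E$ by the computations in items (a)--(c) and (f), so they vanish on $E(\gamma,K_i)$ and hence on $M_i$. For the torsion invariant $\tau$, the product formula of (d) gives $\tau(E(\gamma,K_i))\doteq \tau^\phi_{K_i}$, and Remark~\ref{remark:torsion} yields $\overline{\tau^\phi_{K_i}}\doteq \tau^\phi_{K_i}$, so $\tau(M_i)\doteq \tau^\phi_{K_i}\cdot\overline{\tau^\phi_{K_i}}\in N$, i.e., trivial modulo $\pm HAN$. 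For $\rho_q$ (in the $n=1$ case), the formula of (e) together with Lemma~\ref{lemma:knots}(3) gives $\rho_q(E(\gamma,K_i))=0$, and since $E(\gamma,K_i)\in\Ker r_q$ where $\rho_q$ is a genuine homomorphism, $\rho_q(M_i)=0$.

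To detect independence of the classes $[M_i]$ in the abelianization of the intersection, for each $i$ I would assemble a $\Z_{d_i}$-valued $p$-structure $\T_i$ for $\Sigma$ out of the tower $\{\Sigma_{(t)}\}$ and the reduction of $\phi$ modulo $d_i$. Since $M_j\in\H(\infty)$, Corollary~\ref{cor:homomorphism} makes $\l_{\T_i}$ a well-defined homomorphism on the subgroup containing every $M_j$. Applying Theorem~\ref{theorem:effect} (and using that $M_j$ is a square, so the infection contribution is doubled), we obtain
$$\l_{\T_i}(M_j)=2\sum_k\bigl([\l_{r_k}(A_j,\zeta_{d_i}^{\psi([\tilde\gamma_k])})]-[\l_{r_k}(A_j,1)]\bigr),$$
where $A_j$ is a Seifert matrix of $K_j$ and the sum is over the lifts of $\gamma$. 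Composing $\l_{\T_i}$ with the classical signature $L^0(\Q(\zeta_{d_i}))\to\R$ at $\zeta_{d_i}$, the standard expression of $\operatorname{sign}\l_r(A,\omega)$ as a signed sum of Levine--Tristram signatures of $K_j$ at powers of $\zeta_{d_i}$ shows that the quantity vanishes for $j>i$, by Lemma~\ref{lemma:knots}(2); while for $j=i$, the lift with $\psi([\tilde\gamma_k])=1$ together with $\sigma_{K_i}(\zeta_{d_i})>0$ (and, for $p=2$, nonnegativity of $\sigma_{K_i}$ at all powers of $\zeta_{d_i}$ to rule out cancellation) gives a strictly positive contribution. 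Thus the matrix $(\operatorname{sign}\l_{\T_i}(M_j))_{i,j}$ is upper triangular with nonzero diagonal, and the classes $[M_i]$ span a $\Z^{\infty}$ in the abelianization.

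The principal technical obstacle is the signature computation of the blocks $\l_r(A,\omega)$ in terms of Levine--Tristram signatures of the infection knot, which is precisely the reason the knots $K_i$ were designed in Lemma~\ref{lemma:knots}; a secondary delicate point is the vanishing of $\tau$, which requires the squaring trick to force the invariant into the norm subgroup $N$, and this is exactly why we chose $M_i=E(\gamma,K_i)^2$ rather than $E(\gamma,K_i)$ itself.
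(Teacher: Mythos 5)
Your proposal follows essentially the same route as the paper: construct $M_i = E(\alpha,K_i)^2$ from the curve and tower of Lemma~\ref{lemma:infection-curve-and-tower} and the knots of Lemma~\ref{lemma:knots}, verify membership in the kernels of (a)--(f) via the infection computations and the squaring trick for $\tau$, then detect independence by applying $\l_{\T}$ with $\Z_{d_i}$-valued characters and exploiting the triangular vanishing pattern from Lemma~\ref{lemma:knots}(2) together with $\sigma_{K_i}(\zeta_{d_i})>0$. (One small simplification you could note: Lemma~\ref{lemma:infection-curve-and-tower}(2) forces every lift of $\gamma$ in $\Sigma_{(h)}$ to be a degree-one loop, so $r_k=1$ throughout and $\operatorname{sign}\l_1(A_i,\omega)=\sigma_{K_i}(\omega)$ directly, without needing the general block formula.)
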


\begin{proof}
Let $\alpha$ be a simple closed curve obtained by pushing $i_+^{\vphantom{}} \circ \gamma$ into the interior of the trivial homology cylinder $E$,
 and $K_i$ be knots as in Lemma~\ref{lemma:knots}.

Let $\{\Sigma_{(t)}\}$ be the $p$-tower in Lemma~\ref{lemma:infection-curve-and-tower} and $\phi_d\colon \pi_1(\Sigma_{(h)}) \to \Z_d$ be the composition of the map $\phi\colon \pi_1(\Sigma_{(h)}) \to \Z$ in Lemma~\ref{lemma:infection-curve-and-tower} with the projection $\Z \to \Z_d$ which sends $1\in\Z$ to $1\in\Z_d$. Note that $\pi_1(\Sigma_{(t)})$ and $\pi_1(E_{(t)})$ are isomorphic. For $\T = (\{\Sigma_{(t)}\},\phi_d)$, due to Theorem~\ref{theorem:effect}, we have
$$\l_\T(E(\alpha,K_i)) = \sum_j \big([\l_1(A_i,\zeta_d^{\phi_d([\tilde{\gamma}_j])})] - [\l_1(A_i,1)]\big)$$
where $A_i$ is a Seifert matrix of~$K_i$. Observe that $\sign\l_1(A_i, \omega)=\sigma_{K_i}(\omega)$, $\sigma_{K_i}(1)=0$, and $\sigma_{K_i}(\omega)=\sigma_{K_i}(\omega^{-1})$. By the choice of $\phi_d$ from $\phi$,
$$\sign \l_\T(E(\alpha,K_i))=c \cdot \sigma_{K_i}(\zeta_d)$$
where $c$ is the number of lifts $\tilde{\gamma}_j$ sent to $\pm 1$ by $\phi\colon \pi_1(\Sigma_{(h)}) \to \Z$. Note that $c>0$.

Now we prove that $E(\alpha, K_i)^2$ are linearly independent in the abelianization. Suppose they are linearly dependent. Then $\sum_i a_i E(\alpha,K_i)^2 = 0$ in the abelianization where not all $a_i$ are zero. Let $i_0$ be the smallest integer such that $a_{i_0}\neq 0$. Let $d_i$ be the number in Lemma~\ref{lemma:knots}, and consider the $p$-structure $\T=(\{\Sigma_{(t)}\},\phi_{d_{i_0}})$. Then
\begin{align*}
0=\sign \l_\T\Big(\sum_i~ a_i~ E(\alpha,K_i)^2\Big) &= 2~\sum_i~ a_i ~\sign \l_\T\big(E(\alpha,K_i)\big) \\
&= 2~\sum_i ~a_i~ c ~ \sigma_{K_i}(\zeta_{d_{i_0}}) \\
&= 2~\sum_{i\geq i_0}~ a_i ~c ~ \sigma_{K_i}(\zeta_{d_{i_0}})  \\
&= 2~a_{i_0}~ c ~ \sigma_{K_{i_0}}(\zeta_{d_{i_0}})    \\
&\neq 0.
\end{align*}
This contradiction implies the linear independence of $E(\alpha,K_i)^2$ in the abelianization of the intersection of those kernels. Therefore abelianization has infinite rank.
\end{proof}

We note that the homology cylinders $E(\alpha, K_i)^2$ used in the proof of Theorem~\ref{theorem:infinite rank} are boundary homology cylinders by the following lemma: 
\begin{lemma}
  \label{lemma:preserving}
  A homology cylinder obtained by infection from a boundary homology cylinder, an $\hF$-homology cylinder, or a homology cylinder with vanishing $\tilde\mu_q^{\vphantom{}}$ is a boundary homology cylinder, an $\hF$-homology cylinder, or a homology cylinder with vanishing $\tilde\mu_q^{\vphantom{}}$, respectively.
\end{lemma}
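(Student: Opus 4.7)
The plan is to exploit the fact that infection by a knot $K$ along $\alpha$ yields a degree-one homology equivalence $f\colon M' \to M$ which is the identity outside a tubular neighborhood. In particular, since the markings $i_\pm'$ on $M'$ are defined as $i_\pm'$ with $f\circ i_\pm' = i_\pm$, the lifted longitudes satisfy $f_*(\tilde\lambda_k(M')) = \tilde\lambda_k(M)$ in $\pi_1(M)$ for every $k$, because $\tilde\lambda_k$ is constructed purely from the markings. The technical backbone is then two standard consequences of $f$ being a homology equivalence: by Stallings' theorem $f_*$ induces isomorphisms $\pi_1(M')/\pi_1(M')_q \xrightarrow{\cong} \pi_1(M)/\pi_1(M)_q$ for all $q$, and by the result cited in Section~\ref{sec:hF-homology cylinder} (applicable since $M, M'$ have finite $2$-skeletons and $f$ is $2$-connected on $\Z$-homology), $f_*$ also induces an isomorphism $\widehat{\pi_1(M')} \xrightarrow{\cong} \widehat{\pi_1(M)}$.

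Given these, the three assertions follow in parallel. For vanishing $\tilde\mu_q^{\vphantom{}}$, we use $(i_+)_{*q}^{\vphantom{}} = f_{*q}^{\vphantom{}}\circ (i_+')_{*q}^{\vphantom{}}$ together with the fact that $f_{*q}^{\vphantom{}}$ is an isomorphism and sends $\tilde\lambda_k(M')$ to $\tilde\lambda_k(M)$; applying $(i_+)_{*q}^{-1}$ to both sides gives $\tilde\mu_q^{\vphantom{}}(M') = \tilde\mu_q^{\vphantom{}}(M)$, so vanishing is preserved. For the $\hF$-homology cylinder property, the same naturality together with the isomorphism $\widehat{f_*}\colon \widehat{\pi_1(M')}\xrightarrow{\cong}\widehat{\pi_1(M)}$ shows that $\tilde\lambda_k(M')=0$ in $\widehat{\pi_1(M')}$ if and only if $\tilde\lambda_k(M)=0$ in $\widehat{\pi_1(M)}$.

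For the boundary homology cylinder property, I would invoke the characterization given by Proposition~\ref{proposition:splitting}: $M$ is a boundary homology cylinder if and only if (i) one of $(i_+)_*$, $(i_-)_*$ admits a splitting, and (ii) $\tilde\mu(M)$ vanishes. Condition (ii) has already been handled. For (i), if $\phi\colon \pi_1(M)\to F$ splits $(i_+^{\vphantom{}})_*$, then $\phi':=\phi\circ f_*\colon \pi_1(M')\to F$ satisfies
\[
\phi'\circ (i_+')_* \;=\; \phi\circ f_*\circ (i_+')_* \;=\; \phi\circ (i_+^{\vphantom{}})_* \;=\; \id,
\]
so $\phi'$ splits $(i_+')_*$; the argument is identical for $(i_-^{\vphantom{}})_*$.

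No real obstacle is anticipated here: the entire argument rides on the existence of the homology equivalence $f$ and its induced isomorphisms on $\pi_1/\pi_1_q$ and on algebraic closure, both of which are standard. The only point to be slightly careful about is verifying that the longitudes $\tilde\lambda_k$ defined using $i_\pm'$ for $M'$ literally push forward to those defined using $i_\pm$ for $M$, but this is immediate from $f\circ i_\pm' = i_\pm$ and the definition of $\tilde\lambda_k$ as a concatenation of images of a fixed path in $\Sigma$ under the two markings.
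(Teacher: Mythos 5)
Your proposal is correct and follows essentially the same argument as the paper's proof: identifying the homology equivalence $f\colon M'\to M$, noting $f_*(\tilde\l(M')_k)=\tilde\l(M)_k$, invoking the induced isomorphisms on $\pi_1(-)/\pi_1(-)_q$ and on $\widehat{\pi_1(-)}$, and transporting the splitting via $\phi\circ f_*$ together with Proposition~\ref{proposition:splitting}.
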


\begin{proof}
  Let $M'$ be a homology cylinder obtained from a homology cylinder $M$ by infection using $K$ along~$\alpha$. There is a map $f\colon M' \to M$ which extends the identity between $M'-$ (exterior of $K$) and $M-$ (tubular neighborhood of $\alpha$). Especially $f$ extends the identity between boundaries. The induced map $f_*$ on the fundamental groups sends $\tilde\l(M')_k^{\vphantom{}}$ to $\tilde\l(M)_k^{\vphantom{}}$ for each~$k$. Since $f$ induces isomorphisms on $\widehat{\pi_1(-)}$ and $\pi_1(-)/\pi_1(-)_q$, $\tilde\l(M)_k^{\vphantom{}}$ vanishes in $\widehat{\pi_1(M)}$ or $\pi_1(M)/\pi_1(M)_q$ if and only if $\tilde\l(M')_k^{\vphantom{}}$ vanishes in $\widehat{\pi_1(M')}$ or~$\pi_1(M')/\pi_1(M')_q$. 
  If there is a splitting $\phi_+^{\vphantom{}}$ of~$(i_+^{\vphantom{}})_*$, then $\phi_+^{\vphantom{}}\circ f_*$ is a splitting of~$(i_+')_*$. Appealing to Proposition~\ref{proposition:splitting}, we complete the proof.
\end{proof}

Thus, the same proof of Theorem~\ref{theorem:infinite rank} shows the following:
\begin{theorem}
  \label{theorem:infinite rank subgroups}
  If $b_1(\Sigma)>1$, then the abelianizations of the subgroups $\B\H$, $\widehat\H$ and $\H(\infty)$ contain a subgroup isomorphic to~$\Z^{\infty}$.
\end{theorem}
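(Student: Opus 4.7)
The plan is to reuse the infected homology cylinders $E(\alpha,K_i)^2$ constructed in the proof of Theorem~\ref{theorem:infinite rank} and verify that they already lie in each of the three subgroups $\B\H$, $\hH$ and $\H(\infty)$.

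First, I would observe that the trivial homology cylinder $E$ is simultaneously a boundary homology cylinder (the projection $\Sigma\times I \to \Sigma$ gives the required splitting, and both longitude-type elements $\tilde\l(E)_k^{\vphantom{}}$ are trivial), an $\hF$-homology cylinder (all $\tilde\l(E)_k^{\vphantom{}}$ are trivial in $\pi_1(E)$ hence in $\widehat{\pi_1(E)}$), and satisfies $\tilde\mu(E)=1$, so $E\in\H(\infty)$. Then Lemma~\ref{lemma:preserving} applied to $M=E$ and the infection along $\alpha$ using each $K_i$ yields $E(\alpha,K_i)\in \B\H\cap \hH\cap \H(\infty)$, and since these three sets are subgroups, $E(\alpha,K_i)^2$ lies in each of them as well.

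Next, I would invoke exactly the same computation performed in the proof of Theorem~\ref{theorem:infinite rank}: for the $p$-tower $\{\Sigma_{(t)}\}$ and the loop $\gamma$ from Lemma~\ref{lemma:infection-curve-and-tower}, and the characters $\phi_{d_i}$ induced from Lemma~\ref{lemma:knots}, the Hirzebruch-type invariant $\l_\T$ is a homomorphism on $\H(\infty)$ by Corollary~\ref{cor:homomorphism}, and hence descends to a homomorphism from the abelianizations of $\B\H$, $\hH$, and $\H(\infty)$ (since each of these is contained in $\H(\infty)$). The computation of $\sign \l_\T(E(\alpha,K_i)^2)$ via Theorem~\ref{theorem:effect} then proceeds verbatim, yielding $\sign \l_\T(E(\alpha,K_i)^2)=2c\,\sigma_{K_i}(\zeta_d)$ for a positive integer $c$.

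Finally, the linear independence argument is identical: if $\sum a_i [E(\alpha,K_i)^2]=0$ in the abelianization with $a_{i_0}$ the first nonzero coefficient, then taking $\T$ with $d=d_{i_0}$ and applying properties (1) and (2) of Lemma~\ref{lemma:knots} forces $2a_{i_0}c\,\sigma_{K_{i_0}}(\zeta_{d_{i_0}})=0$, contradicting property (1). There is no real obstacle here beyond assembling the pieces; the only point to be careful about is that one uses $\l_\T$ only on $\H(\infty)$ (so that it is an honest homomorphism and descends to each abelianization), and that the inclusions $\B\H\subset\H(\infty)$ and $\hH\subset\H(\infty)$ guarantee this restriction is defined on all three subgroups.
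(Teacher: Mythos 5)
Your proposal is correct and follows essentially the same route as the paper: the paper also observes via Lemma~\ref{lemma:preserving} that the cylinders $E(\alpha,K_i)^2$ from Theorem~\ref{theorem:infinite rank} lie in $\B\H\subset\hH\subset\H(\infty)$, and then reruns the $\l_\T$-signature computation and linear-independence argument verbatim. You have merely spelled out the inclusion and homomorphism checks that the paper leaves implicit.
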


\begin{remark}
Since the homology cylinders $E(\alpha, K_i)^2$ mutually commute, they generate an abelian group in $\H(\infty)$. Therefore we obtain that there is an infinite rank free abelian subgroup, say $\mathcal{A}$, of $\B\H$ which injects into the abelianization of any subgroup of $\H(\infty)$ containing $\mathcal{A}$, whenever $b_1(\Sigma)>1$.
\end{remark}

\section{Nilpotent cobordism and solvable cobordism}
  \label{sec:cobordisms}
In this section, we consider other types of cobordisms of homology cylinders, related to gropes and Whitney towers.
Let $M$ and $N$ be homology cylinders over~$\Sigma$.

\begin{definition}
A 4-manifold $W$ which is bounded by $\widehat{M\cdot -N}$ and satisfies $H_1(M)\cong H_1(W)\cong H_1(N)$ under inclusion-induced maps is called a \emph{(relative) $H_1$-cobordism} between $M$ and~$N$.
\end{definition}

For an $H_1$-cobordism $W$ between $M$ and $N$, $H_2(W,M)\cong H_2(W) \cong H_2(W,N)$.
Hence, $W$ is an $H_1$-cobordism with $H_2(W)=0$ if and only if $W$ is a homology cobordism.

As an approximation of homology cobordism, first we define nilpotent cobordism of homology cylinders motivated by \cite{Dw75, FT}. For the definition of closed grope of class $q$, we refer to \cite[Section 2]{FT}.

\begin{definition}
	An $H_1$-cobordism $W$ between $M$ and $N$ is called \emph{class $q$ nilpotent cobordism} if there are maps of closed gropes of class $q$ into $W$, whose base surfaces represent homology classes generating~$H_2(W,N)$. If there exists such $W$, we say that $M$ is \emph{class $q$ nilpotently cobordant} to~$N$.
\end{definition}

There is a relation between nilpotent cobordism and $\tilde\mu$-invariants:

\begin{theorem}
  If $M$ is class $q$ nilpotently cobordant to $N$, then $\tilde\mu_q^{\vphantom{}}(M)=\tilde\mu_q^{\vphantom{}}(N)$.
\end{theorem}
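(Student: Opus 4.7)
The plan is to mimic the proof that $\tilde\mu_q^{\vphantom{}}$ is a homology cobordism invariant (given at the start of Section~\ref{sec:longitude}), replacing Stallings' theorem with its Dwyer refinement. Let $W$ be a class~$q$ nilpotent cobordism between $M$ and~$N$. The crux is to show that both inclusions $M\hookrightarrow W$ and $N\hookrightarrow W$ induce isomorphisms on $\pi_1(-)/\pi_1(-)_q$. Once this is in place, the conclusion follows exactly as in the homology cobordism case: on $\partial W=\widehat{M\cdot -N}$ the two markings $i_+^{\vphantom{}}$ and $j_+^{\vphantom{}}$ are identified (and likewise $i_-^{\vphantom{}}$ with $j_-^{\vphantom{}}$), so each pair of longitudinal elements $\tilde\l(M)_k^{\vphantom{}}\in\pi_1(M)$ and $\tilde\l(N)_k^{\vphantom{}}\in\pi_1(N)$ is represented by a common loop in $W$ and hence has a common image in $\pi_1(W)/\pi_1(W)_q$; pulling this image back through $(i_+^{\vphantom{}})_{*q}^{\vphantom{}}$ and $(j_+^{\vphantom{}})_{*q}^{\vphantom{}}$ yields $\tilde\mu_q^{\vphantom{}}(M)_k^{\vphantom{}}=\tilde\mu_q^{\vphantom{}}(N)_k^{\vphantom{}}\in F/F_q$ for every $k$.

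To establish the $\pi_1/\pi_1_q$-isomorphism claim I would invoke Dwyer's theorem, together with the Freedman-Teichner characterization of the Dwyer filtration: for an inclusion $X\hookrightarrow Y$ between spaces with finitely generated fundamental groups that is an $H_1$-isomorphism, the induced map on $\pi_1/\pi_1_q$ is an isomorphism provided $H_2(Y,X)$ maps into the Dwyer subgroup $\phi_q(Y)\subset H_2(Y)$, and a class of $H_2(Y)$ lies in $\phi_q(Y)$ exactly when it is represented by the base surface of a map of a closed grope of class~$q$. The long exact sequence of $(W,N)$ together with the $H_1$-cobordism hypothesis identifies $H_2(W,N)$ with $\Coker\{H_2(N)\to H_2(W)\}$, and by the definition of a class~$q$ nilpotent cobordism this cokernel is spanned by classes of base surfaces of closed gropes of class~$q$, hence lies in~$\phi_q(W)$. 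Dwyer's theorem then gives $\pi_1(N)/\pi_1(N)_q\cong \pi_1(W)/\pi_1(W)_q$. The analogous conclusion for $M\hookrightarrow W$ either comes built into a symmetric reading of the definition, or can be obtained by a Poincaré-Lefschetz duality argument transferring the grope-generation property from $H_2(W,N)$ to $H_2(W,M)$ via the decomposition $\partial W=\widehat{M\cdot -N}$.

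The main obstacle will be the Dwyer-theorem step, i.e.\ translating the geometric grope hypothesis on $H_2(W,N)$ into the algebraic condition on the Dwyer filtration $\phi_q(W)$ and dispatching the $M$-versus-$N$ asymmetry present in the definition. After this translation, the final deduction $\tilde\mu_q^{\vphantom{}}(M)=\tilde\mu_q^{\vphantom{}}(N)$ reduces to a diagram chase essentially identical to the one used in the homology cobordism invariance of~$\tilde\mu_q^{\vphantom{}}$.
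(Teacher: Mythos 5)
Your approach coincides with the paper's: apply Dwyer's theorem (read through the Freedman--Teichner grope characterization of the Dwyer filtration) to the inclusion $N\hookrightarrow W$ to get $F/F_q\cong\pi_1(W)/\pi_1(W)_q$, then run the same diagram chase used for homology-cobordism invariance of $\tilde\mu_q^{\vphantom{}}$. The one place where you overcomplicate matters is the ``$M$-versus-$N$ asymmetry'': since $i_+^{\vphantom{}}$ and $j_+^{\vphantom{}}$ are literally identified in $\partial W$, the two composites $\Sigma\to M\to W$ and $\Sigma\to N\to W$ are the \emph{same} map, so once the $N$-side gives $F/F_q\xrightarrow{\;\cong\;}\pi_1(W)/\pi_1(W)_q$, the commutativity of the triangle through $\pi_1(M)/\pi_1(M)_q$ and Stallings for $\Sigma\hookrightarrow M$ already force $\tilde\mu_q^{\vphantom{}}(M)_k^{\vphantom{}}=\tilde\mu_q^{\vphantom{}}(N)_k^{\vphantom{}}$ by injectivity of that common composite --- no symmetric reading of the definition is needed, and the Poincar\'e--Lefschetz route you sketched would not obviously transport the grope hypothesis from $H_2(W,N)$ to $H_2(W,M)$ in any case.
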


\begin{proof}
  By Dwyer's theorem \cite[Theorem~1.1]{Dw75}), the markings of $M$ and $N$ induce isomorphisms $F/F_q \cong \pi_1(W)/\pi_1(W)_q$.
We consider the commutative diagram below.
  $$\begin{diagram} 
    \node[2] {\pi_1(M)/\pi_1(M)_q} \arrow{se} \\
    \node{F/F_q} \arrow[2]{e,tb}{\qquad(i_+^{\vphantom{}})_{*q}^{\vphantom{}}=(j_+^{\vphantom{}})_{*q}^{\vphantom{}}}{\qquad \cong} \arrow{ne,l}{\cong} \arrow{se,r}{\cong} \node[2]{\pi_1(W)/\pi_1(W)_q} \\
    \node[2]{\pi_1(N)/\pi_1(N)_q} \arrow{ne}
  \end{diagram}$$
Since $\tilde\l(M)_k^{\vphantom{}}$ and $\tilde\l(N)_k^{\vphantom{}}$ are sent to the same element in $\pi_1(W)$ for each~$k$, we obtain $\tilde\mu_q^{\vphantom{}}(M)= \tilde\mu_q^{\vphantom{}}(N)$.
\end{proof}

Next, let us consider solvable cobordism of homology cylinders. 
For a precise definition of solvable cobordism, see \cite[Definition~2.8]{C12}. In \cite{C12}, it is defined between bordered 3-manifolds. Since a homology cylinder is a special case of bordered 3-manifolds, the definition is applied directly to homology cylinders.
A solvable cobordism is also an $H_1$-cobordism approximating homology cobordism. Note that $M$ is $(r)$-solvably cobordant to $N$ if and only if $\widehat{M\cdot(-N)}$ is $(r)$-solvable as a closed 3-manifold (see \cite[Definition 2.1]{COT2}) for $r\in \frac{1}{2}\Z_{\geq 0}$. We say that $M$ is \emph{$(r)$-solvable} if $M$ is $(r)$-solvably cobordant to $E$, or equivalently, if $\hM$ is $(r)$-solvable. 

\begin{theorem}
\label{theorem:solvability}
  Suppose $M \in \H(q)$ and $\T$ is a $p$-structure of height $\leq h$ for $\Sigma$ of order~$q$.
  If either \begin{enumerate}
	\item $M$ is $(h+1)$-solvable or
	\item $M$ is $(h.5)$-solvable and \textnormal{(C1)--(C5)} of Theorem~\ref{theorem:additivity condition} hold for $h+1$,
  \end{enumerate}	
  then $\l_\T(M)$ vanishes.
\end{theorem}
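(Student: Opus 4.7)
The plan is to realize $\l_\T(M)$ as the Witt-class defect of a 4-manifold obtained by lifting a solvable cobordism of $\hM$ through the tower of covers determined by $\T$, and then to invoke the Lagrangian criterion (K2) of Section~\ref{sec:homology cobordism invariants}.

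Let $W$ be an $(h{+}1)$-solution for $\hM$ in case (1), or an $(h.5)$-solution in case (2). In both cases $\hM\hookrightarrow W$ induces an isomorphism on $H_1(-;\Z)$. Using this and inductively applying (K1), I would lift the $p$-tower $\{\hM_{(t)}\}_{t\leq h}$ to a $p$-tower $\{W_{(t)}\}_{t\leq h}$ for $W$ with $\hM_{(t)}\subset\partial W_{(t)}$: at each step the surjection $\pi_1(\hM_{(t)})\twoheadrightarrow\Gamma_{(t)}$ onto a finite abelian $p$-group defining $\hM_{(t+1)}$ extends uniquely to $\pi_1(W_{(t)})$ because the $\zp$-homology map is an isomorphism, and its kernel determines $W_{(t+1)}$. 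Replacing $W$ by an $r$-fold disjoint union if necessary to kill the bordism class of $(\hM_{(h)},\psi)$ in $\Omega_3(B\Z_d)$, I then extend $\psi$ to a $\Z_d$-character on $\pi_1(W_{(h)})$, producing a $\Z_d$-cover $W_{(h+1)}$ of $W_{(h)}$ bounded by (a multiple of) $\hM_{(h+1)}$. With this choice of bounding 4-manifold,
$$ r \cdot \l_\T(M) \;=\; [\l_{\Q(\zeta_d)}(W_{(h)})]\,-\,[\l_\Q(W_{(h)})], $$
so it suffices to prove that both Witt classes vanish.

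For case (1), let $\{L_i,D_i\}$ be the basis of $H_2(W;\Z)$ given by the $(h{+}1)$-solution, so $\pi_1(L_i),\pi_1(D_i)\subset\pi_1(W)^{(h+1)}$. Since the quotient $\pi_1(W)/\pi_1(W_{(h+1)})$ has derived length at most $h{+}1$, one has $\pi_1(W)^{(h+1)}\subset\pi_1(W_{(h+1)})$, and every $L_i$ and $D_i$ lifts to a disjoint union of copies in $W_{(h+1)}$. The lifted classes form a $\Q[\Z_d]$-hyperbolic basis of $H_2(W;\Q[\Z_d])\cong H_2(W_{(h+1)};\Q)$. After verifying that the boundary components of $W_{(h+1)}$ other than $r\hM_{(h+1)}$ contribute only to the radical of the pairing into $H_2(W_{(h+1)},\partial_0 W_{(h+1)};\Q)$, (K2) yields $[\l_{\Q(\zeta_d)}(W_{(h)})]=0$; the analogous argument one level down (using that the basis already lifts to $W_{(h)}$) gives $[\l_\Q(W_{(h)})]=0$.

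The main obstacle is case (2). For an $(h.5)$-solution, $\pi_1(L_i)\subset\pi_1(W)^{(h+1)}$ still holds, but $\pi_1(D_i)$ lies only in $\pi_1(W)^{(h)}$, so the $L_i$'s lift to disjoint copies in $W_{(h+1)}$ while the $D_i$'s need not. The $L$-lifts then form a totally isotropic subspace of $H_2(W_{(h+1)};\Q)$, and to conclude via (K2) I must argue that this subspace is a maximal Lagrangian. Here I would invoke any one of (C1)--(C5) at $t=h{+}1$---for instance (C3), that $H_1(\Sigma_{(h+1)};\Q)\to H_1(\hM_{(h+1)};\Q)$ is an isomorphism. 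A Mayer--Vietoris and Poincar\'e--Lefschetz duality computation analogous to that in the proof of Theorem~\ref{theorem:additivity condition} then shows the rank of $H_2(W_{(h+1)};\Q)$ modulo the boundary radical matches exactly twice the number of independent $L$-lifts, so the lifted $L_i$'s form a maximal Lagrangian, giving $[\l_{\Q(\zeta_d)}(W_{(h)})]=0$; the untwisted form is handled one level lower by lifting the $D_i$'s into $W_{(h)}$, where they already live. The delicate point throughout case (2) is precisely that the failure of $D_i$'s to lift to the top cover is absorbed into the boundary contribution controlled by (C1)--(C5), and this is where I expect the technical bulk of the proof to reside.
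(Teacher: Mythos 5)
The paper's own proof is almost entirely a citation: after observing that the hypotheses of case (2) give $H_1(\Sigma_{(t)};\zp)\cong H_1(\hM_{(t)};\zp)$ up to $t=h+1$ (hence $H_1(\hM_{(t)})$ is $p$-torsion-free and the rational Betti numbers agree), it invokes \cite[Theorem~8.2]{C10} for case (1) and \cite[Theorem~3.2]{C09} for case (2). Those external theorems already contain the machinery of lifting an $(r)$-solution through a $p$-tower and showing the resulting intersection form is Witt-trivial. You are therefore taking a genuinely different route: rather than verifying the hypotheses of the cited theorems, you are attempting to re-derive their content.

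Your broad strategy matches what \cite{C10,C09} (and ultimately \cite{COT}) actually do, and several of the smaller points are correct: for instance, the subnormal series $\pi_1(W_{(h+1)})\triangleleft\cdots\triangleleft\pi_1(W)$ with $h+1$ abelian factors does imply $\pi_1(W)^{(h+1)}\subset\pi_1(W_{(h+1)})$, so the $L_i,D_i$ (resp. $L_i$ only) lift as you claim. But there are two real gaps you have only gestured at.
First, the assertion that the lifted $L_i,D_i$ form a $\Q[\Z_d]$-hyperbolic basis of $H_2(W_{(h+1)};\Q)$ (and analogously that the lifted $L_i$ alone give a maximal isotropic subspace in case (2)) is the entire technical content of the cited theorems; it is not a consequence of lifting, but requires a separate $\Q[\Gamma]$-dimension/rank computation comparing $\dim_\Q H_2(W_{(h+1)};\Q)$ to the boundary contribution. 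Writing ``after verifying'' and ``a Mayer--Vietoris and Poincar\'e--Lefschetz computation then shows'' conceals rather than supplies this step.
Second, you should be careful with what (K1) gives you when extending the tower upward: $H_2(W,\hM)$ is nonzero (rank $2k$ for a $k$-surface solution), so you only have (K1) at level $n=1$; what you actually need and should say is that $H_1(\hM_{(t)};\zp)\to H_1(W_{(t)};\zp)$ is an isomorphism at each stage, which suffices for unique factorization of the $p$-group characters but is weaker than 2-connectivity. As written, the proposal is a correct outline of a different (harder) proof, but not yet a proof.
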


\begin{proof}
  If $M$ satisfies (C2) in Theorem~\ref{theorem:additivity condition} for $h+1$, then $H_1(\Sigma_{(t)};\zp) \cong H_1(\hM_{(t)};\zp)$ for $t\leq h+1$, and so $H_1(\hM_{(t)})$ is $p$-torsion free and $\rank H_1(\Sigma_{(t)};\Q)=\rank H_1(\hM_{(t)};\Q)$ for all $t \leq h+1$. 
The desired conclusion follows immediately from \cite[Theorem 8.2]{C10} and \cite[Theorem 3.2]{C09}.
\end{proof}

By exactly the same argument as in \cite[Proposition~3.1]{COT2}, we have a similar result on homology cylinders infected along a simple closed curve in some derived series:

\begin{lemma}
  \label{lemma:solvable}
	Let $M$ be an $(h)$-solvable homology cylinder. Suppose $\alpha$ is a simple closed curve with $[\alpha]\in (\pi_1(M))^{(h)}$ and $K$ is a knot in $S^3$ with vanishing Arf invariant.
Then, $M(\alpha,K)$ obtained by infection from $M$ along $\alpha$ using $K$ is $(h)$-solvable.
\end{lemma}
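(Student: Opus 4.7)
The plan is to follow \cite[Proposition~3.1]{COT2} verbatim, translating it from the knot concordance setting to the homology cylinder setting via the closure $M \mapsto \hM$. Since $M$ being $(h)$-solvable means $\hM$ is $(h)$-solvable as a closed 3-manifold, the goal is to manufacture an $(h)$-solution $W'$ for $\widehat{M(\alpha,K)}$ starting from any $(h)$-solution $W$ for $\hM$, where the extra Lagrangian/dual data coming from the infection are supplied by a ``plug'' built from the knot $K$.

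First I would invoke the plug: because $\mathrm{Arf}(K)=0$, a standard construction (push a Seifert surface for $K$ into $D^4$ and perform ambient surgery, using the vanishing of Arf to kill the obstruction) yields a compact 4-manifold $C_K$ with $\partial C_K = S^3_0(K)$ such that $H_1(C_K)=\Z$ is generated by the meridian $\mu_K$ of $K$, and $H_2(C_K)$ admits a symplectic basis represented by disjointly embedded surfaces whose fundamental groups map into $\pi_1(C_K)^{(1)}$. Next I would form $W' := W \cup C_K$ by identifying a tubular neighborhood of $\alpha$ in $\hM \subset \partial W$ with the ``dual'' solid torus of the $0$-surgery inside $\partial C_K = S^3_0(K)$, so that $\alpha \leftrightarrow \mu_K$ and the meridian of $\alpha$ corresponds to the $0$-framed longitude of $K$; this matches the infection convention. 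A Mayer-Vietoris computation then confirms $\partial W' = \widehat{M(\alpha,K)}$, that the inclusion induces an isomorphism on $H_1$, and that $H_2(W') \cong H_2(W) \oplus H_2(C_K)$ orthogonally, so that the union of the two Lagrangian/dual bases is again a Lagrangian with dual in $W'$.

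The main obstacle is the derived-series condition: one must verify that the new surfaces coming from $C_K$ represent classes whose fundamental groups map into $\pi_1(W')^{(h)}$, not merely into $\pi_1(W')^{(1)}$. By the Seifert-van Kampen theorem, $\pi_1(W') = \pi_1(W) *_{\Z} \pi_1(C_K)$ with the amalgamated $\Z$ identifying $[\alpha]$ with $[\mu_K]$, and functoriality together with $[\alpha] \in \pi_1(M)^{(h)}$ gives $[\mu_K] \in \pi_1(W')^{(h)}$. I would then induct on $k$: assuming the image of $\pi_1(C_K)$ in $\pi_1(W')$ lies in $\pi_1(W')^{(k)}$ for some $0 \le k < h$, the image of $\pi_1(C_K)^{(1)}$ (a product of commutators) lies in $\pi_1(W')^{(k+1)}$; combined with the fact that $\pi_1(C_K)/\pi_1(C_K)^{(1)} = \Z$ is generated by $\mu_K \in \pi_1(W')^{(k+1)}$ (since $k+1 \le h$), this upgrades the image of $\pi_1(C_K)$ itself to $\pi_1(W')^{(k+1)}$. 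Iterating from $k=0$ up to $k=h$ forces the image of $\pi_1(C_K)^{(1)}$ into $\pi_1(W')^{(h+1)} \subset \pi_1(W')^{(h)}$, which is exactly what is needed for the new Lagrangian/dual surfaces. Hence $W'$ is an $(h)$-solution for $\widehat{M(\alpha,K)}$, so $M(\alpha,K)$ is $(h)$-solvable.
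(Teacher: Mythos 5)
Your proof is correct and is exactly the argument the paper invokes: the paper's entire proof of this lemma is the citation ``By exactly the same argument as in \cite[Proposition~3.1]{COT2},'' and what you have written is a faithful transcription of that Cochran--Orr--Teichner argument to the closure $\hM$, including the Arf-plug $C_K$, the gluing along $\nu(\alpha)\leftrightarrow$ the surgery torus, the Mayer--Vietoris splitting, and the derived-series bootstrap that pushes the image of $\pi_1(C_K)$ into $\pi_1(W')^{(h)}$.
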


We consider the $(r)$-solvable filtration of~$\H$. Denote by $\F^G_{(r)}$ the set of all homology cobordism classes of $(r)$-solvable homology cylinders in~a subgroup $G$ of~$\H$. It can be seen that $\F^G_{(r)}$ is a normal subgroup of $G$ for any subgroup $G$ of~$\H$.
We remark that this may be compared with Kitayama's groups of refined cobordism classes of homology cylinders whose marking induce isomorphisms on solvable quotients \cite{Ki}. 

\begin{theorem}
  \leavevmode \Nopagebreak 
\label{theorem:solvable filtration}
  \begin{enumerate}
	\item
	  For a $p$-structure $\T$ of height $h$ for $\Sigma$, $\l_\T$ gives rise to a homomorphism
	  $$\l_\T \colon \F^{\H(\infty)}_{(h)}/\F^{\H(\infty)}_{(h.5)} \to L^0(\Q(\zeta_d)).$$
	\item The abelianization of $\F^{\H(\infty)}_{(h)}/\F^{\H(\infty)}_{(h.5)}$ 
	 has infinite rank.
  \end{enumerate}
  Both $(1)$ and $(2)$ also hold for $\F^{\B\H}_{(h)}/\F^{\B\H}_{(h.5)}$ and $\F^{\hH}_{(h)}/\F^{\hH}_{(h.5)}$.
\end{theorem}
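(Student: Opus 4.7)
The plan is to assemble Theorem~\ref{theorem:solvable filtration} from tools already developed in the excerpt, so the proof should be short and combinatorial in spirit rather than computational.

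For part (1), I would first verify that $\l_\T$ is well-defined on $\F^{\H(\infty)}_{(h)}$ and additive there. Additivity is immediate from Corollary~\ref{cor:homomorphism}, since every element of $\H(\infty)$ lies in $\H(q)$ for all $q$. To show that $\l_\T$ descends to the quotient by $\F^{\H(\infty)}_{(h.5)}$, I would invoke Theorem~\ref{theorem:solvability}(2): for any $M\in\F^{\H(\infty)}_{(h.5)}$, membership in $\H(\infty)$ makes all $\tilde\mu_q(M)_k$ vanish, so condition (C5) of Theorem~\ref{theorem:additivity condition} holds trivially at every height, in particular at $h+1$; since $M$ is $(h.5)$-solvable by hypothesis, $\l_\T(M)=0$. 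The kernel thus contains $\F^{\H(\infty)}_{(h.5)}$, giving the desired homomorphism on the quotient. The same argument works verbatim for the subgroups $\B\H$ and $\hH$, because they are contained in $\H(\infty)$ (the former trivially, the latter by the remark after Lemma~\ref{lemma:hF=closed}).

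For part (2), I would recycle the examples $E(\alpha,K_i)^2$ constructed in the proof of Theorem~\ref{theorem:infinite rank}, recalling that $\alpha$ is obtained from a loop $\gamma$ with $[\gamma]\in\pi_1(\Sigma)^{(h)}$ provided by Lemma~\ref{lemma:infection-curve-and-tower}(1), and the knots $K_i$ have vanishing Arf invariant by Lemma~\ref{lemma:knots}(4). Since $E$ is trivially $(h)$-solvable and $[\alpha]\in\pi_1(E)^{(h)}$, Lemma~\ref{lemma:solvable} ensures that each $E(\alpha,K_i)$, and hence its square, is $(h)$-solvable. By Lemma~\ref{lemma:preserving}, these examples also lie in $\B\H$ and in $\hH\subset\H(\infty)$, so they represent classes in all three subgroups $\F^{\H(\infty)}_{(h)}/\F^{\H(\infty)}_{(h.5)}$, $\F^{\B\H}_{(h)}/\F^{\B\H}_{(h.5)}$, and $\F^{\hH}_{(h)}/\F^{\hH}_{(h.5)}$.

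To prove linear independence in the abelianization, I would repeat the argument from Theorem~\ref{theorem:infinite rank} verbatim: a hypothetical relation $\sum_i a_i[E(\alpha,K_i)^2]=0$ with smallest nonvanishing index $i_0$ is contradicted by evaluating $\sign\l_\T$ for the $p$-structure $\T=(\{\Sigma_{(t)}\},\phi_{d_{i_0}})$ of Lemma~\ref{lemma:infection-curve-and-tower} and Lemma~\ref{lemma:knots}, since then
\[
\sign\l_\T\Big(\sum_i a_i E(\alpha,K_i)^2\Big) = 2 a_{i_0} c\, \sigma_{K_{i_0}}(\zeta_{d_{i_0}})\neq 0.
\]
This uses both part (1) of the current theorem (to guarantee $\l_\T$ is a homomorphism on the quotient, so it vanishes on any commutator) and Theorem~\ref{theorem:effect} (to compute the signature contribution of each infection). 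The main obstacle I anticipate is only a bookkeeping one: checking that the $p$-structure of Lemma~\ref{lemma:infection-curve-and-tower} can be taken to have height exactly $h$, so that the examples sit in $\F_{(h)}$ rather than in some lower level of the filtration; this is already built into the statement of that lemma, so no extra work should be required.
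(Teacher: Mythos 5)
Your proposal is correct and follows essentially the same route as the paper: part (1) from Corollary~\ref{cor:homomorphism} together with Theorem~\ref{theorem:solvability}(2) (noting that condition (C5) is automatic on $\H(\infty)$), part (2) by reusing the examples $E(\alpha,K_i)^2$ of Theorem~\ref{theorem:infinite rank}, with Lemma~\ref{lemma:solvable} supplying $(h)$-solvability and Lemma~\ref{lemma:preserving} placing them in $\B\H\subset\hH\subset\H(\infty)$. Your final worry about the height of the $p$-structure is indeed only bookkeeping, already settled by the statement of Lemma~\ref{lemma:infection-curve-and-tower}, and the fact that the examples are not already in $\F_{(h.5)}$ is exactly what the nonvanishing of $\l_\T$ in the independence argument establishes.
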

\begin{proof}
(1) follows from Theorem	~\ref{theorem:solvability} and Corollary~\ref{cor:homomorphism}.
By Lemma~\ref{lemma:solvable}, $E(\alpha,K_i)^2$ in the proof of Theorem~\ref{theorem:infinite rank} is $(h)$-solvable, and is in $\F^{\H(\infty)}_{(h)}$. With the homomorphism in (1), the same argument as in the proof of Theorem~\ref{theorem:infinite rank} proves (2). 
The last sentence follows from that all $E(\alpha,K_i)^2$ are boundary homology cylinders, and also $\hF$-homology cylinders by Lemma~\ref{lemma:preserving}.  \end{proof}

\end{document}